\newtheorem{theorem}{Theorem}[section]
\newtheorem{corollary}[theorem]{Corollary}
\newtheorem{definition}[theorem]{Definition}
\newtheorem{lemma}[theorem]{Lemma}
\newcommand{\R}{\mathbb{R}}
\newcommand{\N}{\mathbb{N}}
\newcommand{\M}{\mathrm{M}}
\newcommand{\g}{\mathrm{g}}
\newcommand{\dd}{\mathrm{d}}
\newcommand{\dv}{\,\mathrm{dv}^{n}}
\newcommand{\dvv}{\,\mathrm{dv}^{2n-1}}
\newcommand{\ds}{\,\mathrm{d\sigma}^{n-1}}
\newcommand{\dss}{\,\mathrm{d\sigma}^{2n-2}}
\newcommand{\I}{\mathcal{I}}
\newcommand{\s}{\mathbb{S}}
\newcommand{\D}{\mathcal{D}}
\newcommand{\X}{\mathcal{X}}
\newcommand{\p}{\partial}
\newcommand{\norm}[1]{\|#1\|}
\newcommand{\abs}[1]{|#1|}
\newcommand{\set}[1]{\left\{#1\right\}}
\newcommand{\para}[1]{(#1)}
\newcommand{\seq}[1]{\left<#1\right>}
\newcommand{\To}{\longrightarrow}
\newcommand{\vv}{\mathrm{v}}
\newcommand{\hh}{\mathrm{h}}
\newcommand{\dive}{\textrm{div}}
\begin{document}

\title[Linear Boltzmann equation]{An inverse problem for the time-dependent linear Boltzmann equation  in a Riemannian setting}
\author[Z. Rezig]{Zouhour Rezig}
\address{University of Tunis El Manar, Faculty of Sciences of Tunis, ENIT-LAMSIN, B.P. 37, 1002 Tunis, Tunisia}
\email{zouhour.rezig@fst.utm.tn}
\subjclass[2020]{Primary 35R30, 35Q20, 58J32}
\keywords{Inverse problem, Riemannian manifold, Stability estimate, Linear Boltzmann equation, Geodesical ray transform, Albedo operator}

\begin{abstract}
 The linear Boltzmann equation governs the absorption and scattering of a population of particles in a medium with an ambient field, represented by a Riemannian metric, where particles follow geodesics. In this paper, we study the possible issues of uniqueness and stability in recovering the absorption and scattering coefficients from the boundary knowledge of the albedo operator. The albedo operator takes the incoming
flux to the outgoing flux at the boundary. For simple compact  Riemannian manifolds of dimension $n \geq 2$, we study the stability of  the absorption coefficient from the albedo operator up to a gauge transformation. We derive that when the absorption coefficient is isotropic then the albedo operator determines uniquely the absorption coefficient and we establish a stability estimate. We also give an identification result  for the reconstruction of the scattering parameter. The approach in this work is based on the construction of  suitable  geometric optics solutions and  the use of the invertibility of the geodesic ray transform.
\end{abstract}
\maketitle
\section{Introduction and main results}
\subsection{Statement of the problem}
Let $\M\subset\R^n$, $n\geq 2$, be a convex compact domain with smooth boundary $\p\M$, equipped with  a Riemannian metric $\g$ of class $C^{\infty}$. In a local coordinates system, we write $\g=(\g_{jk})$ and by $(\g^{jk})$ we denote the inverse of $\g$ and  by $\det\,(\g_{jk})$ the determinant of $\g$. The inner product and the norm on the tangent space $T_x\M$ are respectively  denoted $\seq{,}$ and $|\cdot|$.  We define the sphere bundle of $\M$ by setting
\begin{align*}
S\M=\set{(x,\theta)\in T\M;\,\abs{\theta}=1}.
\end{align*}
We fix $T>0$ and we let $a$ and $k$ be two positive and bounded functions.
We are interested in the linear Boltzmann transport equation
\begin{equation}\label{1.1}
\partial_t u(t,x,\theta)+\D u(t,x,\theta)+a(x,\theta)u(t,x,\theta)=\I_k[u](t,x,\theta)\;\;   \textrm{in }\;(0,T)\times S\M,
\end{equation}
where $\D$ is the derivative along the geodesic flow and $\I_k$ is the integral operator with kernel  $k(x,\theta,\theta')$ defined by
\begin{equation*}\label{Ik}
\I_k[u](t,x,\theta)=\displaystyle \int_{S_x\M} k(x,\theta,\theta')u(t,x,\theta') \dd\omega_x( \theta'),
\end{equation*}
where $\dd\omega_x(\theta)$ is the volume form on the unit sphere $S_x\M=S\M \cap T_x\M$ at $x$. We can identify $S_x\M$ to $\mathbb{S}^{n-1}$ the unit sphere of $\R^n$.\\
The case of an Euclidean metric corresponds to transport in materials with a constant index of refraction.  For a general metric, the integro-differential transport equation $(\ref{1.1})$ governs the evolution of a population of particles  in a
medium with varying, anisotropic index of refraction like it is the case of the evolution of neutrons in a reactor \cite{[CZ]} \cite{[DM]}. A particle is described, at time $t$, by its position $x$ and the direction of its velocity $\theta$. The solution $u(t,x,\theta)$ represents the density of particles at the moment $t$, at $(x,\theta)$. The equation $(\ref{1.1})$ models only scattering of particles from the medium and assumes that particle-to-particle interaction may be neglected (which leads to discard nonlinear terms) and that all scattering occurrences preserve speed (i.e. elastic scattering; that is $k(x,\theta,\theta')=0$ for all $|\theta| \neq |\theta'|$).  In absence of interaction with the medium, the term $\D u$ describes the motion of a particle following a geodesic. If $\g$ is Euclidean
then $\D$ is the directional derivative: $\D u(x, \theta) = \theta \cdot \nabla_x u(x,\theta)$. The term $a(x,\theta)u(t,x,\theta)$ is the particle loss at the moment $t$ at $(x,\theta)$ due to absorption, quantified by the function $a(x,\theta)$. The scattering (or collision) coefficient $k(x,\theta,\theta')$ represents the probability density for the particles  at $x$ to change from speed $\theta'$ to $\theta$. So the right-hand side of $(\ref{1.1})$ represents the production, due to scattering, at $x\in \M$ of particles with velocity $\theta$ from particles with velocity $\theta'$.\\
Let $(x,\theta)\in S\M$, we denote by $\gamma_{x,\theta}(t)$ the geodesic satisfying $\gamma_{x,\theta}(0)=x$ and $\dot{\gamma}_{x,\theta}(0)=\theta$. Under assumptions on the geometry of $(\M,\g)$, discussed in Definition \ref{simple}, the geodesic $\gamma_{x,\theta}(t)$ is defined on a maximal finite segment $[\tau_-(x,\theta),\tau_+(x,\theta)]$. Thus, the geodesic flow is defined by
\begin{equation}\label{flowphit}
\phi_t:S\M\to S\M,\quad \phi_t(x,\theta)=(\gamma_{x,\theta}(t),\dot{\gamma}_{x,\theta}(t))
,\quad t\in [\tau_-(x,\theta),\tau_+(x,\tau)],
\end{equation}
and the derivative of a function $v$ along the geodesic flow, is defined as follows : for  $(x,\theta)\in S\M$,
\begin{equation}\label{D}
\D v(x,\theta)=\p_s(v(\phi_s(x,\theta)))_{|s=0}.
\end{equation}

In order to find a solution of $(\ref{1.1})$, we must know the incoming flow at the boundary. We first define  the incoming and outgoing bundles on $\p\M$ as
\begin{equation*}\label{incom outcom}
\p_{\pm}S\M =\set{(x,\theta)\in S\M,\, x \in \p \M,\, \pm\seq{\theta,\nu(x)}\geq 0},
\end{equation*}
where $\nu$ is the unit vector of the outer normal to the boundary. The boundary of the manifold $ S\M$, defined as $\p S\M=\set{(x,\theta)\in S\M,\, x \in \p \M}$, is the union of the two compact submanifolds $\p_+ S\M$ and $\p_-S\M$.\\
To simplify the presentation, we denote $$Q_T=(0,T)\times S\M,\;\;\Sigma=(0,T)\times \p S\M,\;\;\Sigma_\pm=(0,T)\times \p_\pm S\M.$$
Under some assumptions on the geometry of $(\M,\g)$ and on the functions $a$ and $k$, discussed below, the initial boundary value problem
 \begin{equation}\label{IBVP}
\left\{
\begin{array}{lll}
\big(\partial_t u+\D u+a(x,\theta)\big)u(t,x,\theta)=\I_k[u](t,x,\theta)  & \textrm{in }\; Q_T,\cr
u(0, \cdot,\cdot )=0 & \textrm{in }\; S\M ,\cr
u=f_- & \textrm{on } \; \Sigma_-,
\end{array}
\right.
\end{equation} has a unique solution.\\
 The medium is probed with the given radiation $ f_- $ and the exiting radiation detected on $\p _+ S\M$ measures the response to this excitation. In this paper, we deal with the inverse problem of determining the absorption coefficient $a$ and the collision kernel $k$ appearing in (\ref{IBVP}).  The data  are given by the incoming and the outgoing flows $u_{|\Sigma_{\pm}}$. We can define the albedo operator  \begin{equation*}\label{Albedo}
\mathcal{A}_{a,k}:  f_- \longmapsto u_{|\Sigma_+},
\end{equation*}
 where $u$ is the solution of  $(\ref{IBVP})$.
In a first step, we study the uniqueness and the stability issues in the determination of the absorption coefficient $a$ from the knowledge of the albedo operator. In fact, when the coefficient $a$ is anisotropic; that is depending on both $x$ and $\theta$, it is possible to have media of differing attenuation and scattering properties which yield the same albedo operator.
The non-uniqueness is characterized by the invariance of the albedo operator under a gauge transformation. Indeed, for a positive function $l \in L^{\infty}(S\M)$  with $\D l \in L^{\infty}(S\M)$ and such that $l=1$ on $\p S\M$, if we set
\begin{equation}\label{gauge}
\tilde{a}(x,\theta)= a(x,\theta) -\D \log l(x, \theta),\;\tilde{k}(x, \theta,\theta') = k(x, \theta,\theta')\frac{l(x,\theta)}{l(x,\theta')},
\end{equation}
then $u$ satisfies (\ref{IBVP}) if and only if $\tilde{u} = lu$ solves
\begin{equation*}
\partial_t \tilde{u}+\D \tilde{u}+a(x,\theta)\tilde{u}(t,x,\theta)=\I_k[\tilde{u}](t,x,\theta)  \; \textrm{in }\; Q_T.
\end{equation*}
Since $l=1$ on $\p S\M$ then $ u =\tilde{u}$ on $\p S\M$, so $\mathcal{A}_{a,k}=\mathcal{A}_{\tilde{a},\tilde{k}}$ (we refer to \cite{[CS2]} or \cite{[ST]} for more details). To remove this lack of uniqueness, we assume that the coefficient $a$ is isotropic; that is  it depends only on the position $x$, and we prove  that we can stably recover the  geodesic ray transform of $a$, namely the integrals of $a$ along geodesics of $g$, from $\mathcal{A}_{a,k}$. Then in order to reconstruct $a$, we add a geometric restriction on the metric $\g$ and we use a stability estimate inverting the  geodesic ray transform. The second step of this work consists in proving a uniqueness result for recovering the spatial part of the scattering coefficient $k$  from the albedo operator.\\
 Transport equation $(\ref{1.1})$ is used to model the absorption and scattering of near-infrared light; the determination of the  absorption and scattering properties of a medium from the measurement of the response to such transmitted light is known as optical tomography \cite{[A]} and  has been applied to the problems of medical imaging  \cite{[NW]} and optical molecular imaging \cite{[CBGA]}. We also find applications in remote sensing in the atmosphere \cite{[MD]} \cite{[S]}, in astrophysics to study the interstellar media \cite{[C]} and for light propagating in random media such a biological tissue \cite{[A]}\cite{[AS]}.
    \subsection{Relation to the litterature}
  In the Riemannian setting, an inverse problem for the stationary transport equation is treated in  \cite{[S]},  where $k(x,\theta,\theta')=k(x,\langle \theta,\theta'\rangle)$ is assumed to depend on the angle between $\theta$ and $\theta'$ and the problem is the recover of an isotropic source term. In \cite{[M]}, McDowall considered the stationary transport equation in simple manifolds with $a(x,\theta)=a(x,|\theta|)$ depending on the speed $|\theta|$ and not on the direction. It is shown that the albedo operator determines the absorption coefficient (in all dimensions) and the scattering coefficient $k(x,\theta,\theta')$ in dimension $3$ or greater. A time-dependent transport equation is considered in \cite{[F]}, with spatially varying refractive index $a(x)$ and with scattering coefficient $k(\theta,\theta')$ independent of position.\\
   More is known when the metric is assumed to be Euclidean. The uniqueness of the reconstruction of the absorption and scattering coefficients both in the time-dependent and time-independent settings was proved by Choulli and Stefanov in \cite{[CS1]}, \cite{[CS2]}. They studied the cases where $a$ depends on position and direction or only on position and where $k$ does not depend on $t$.  The authors used the scattering theory and its relationship with the albedo operator.  They proved that the albedo operator determines uniquely $a(x)$ and $k(x,\theta,\theta')$ when $a$ depends only on the position $x$ and that there is no uniqueness in the recover of $a(x,\theta)$ if it depends on both the position and the direction. They also gave a stability result for the reconstruction of $a(x)$. For two-dimensional domains in \cite{[T]}, a homogenous scattering kernel $k$ that is a function of two independent directions was shown to be uniquely determined. In \cite{[SU]}, the assumption of homogeneity is dropped and $k$ is assumed to be small relative to $a$ and the authors also gave a stability result. Stability in the time-independent case has also been analyzed in dimension $d=2, 3$ under smallness assumptions for the absorption and scattering parameters by Romanov \cite{[R1]}, \cite{[R2]}. In dimension d = 3, Wang \cite{[W]} obtained partial stability  results for recovering the coefficients  without smallness assumptions.  In three or higher  dimensions, Bal and Jollivet proved a stable recovery of the absorption and collision parameters in the stationary setting  \cite{[BJ1]} and in the time-dependent case \cite{[BJ2]}. Their approach is based on the singular decomposition of the Schwartz kernel of the albedo operator.  For time-depend Boltzmann equation where $a$ depends only on $x$, we can also refer to the work of  Cipolatti, Motta and Roberty \cite{[CMR]} and Cipolatti \cite{[C]} where they used geometric optics solutions. For the case where $a(t,x)$ depends on the time $t$ and the position $x$, we can consult the paper of Bellassoued and Boughanja \cite{[BB1]} where they established a log-type stability in the determination of $a(t,x)$ in different regions of the cylindrical domain by considering  different sets of data. The authors also proved that there is an obstruction to uniqueness appearing in a cloaking region of the domain and by enlarging the set of data, they showed that $a(t,x)$ can be stably recovered in larger subsets of the domain, including the whole domain. The same authors in another work \cite{[BB2]} studied the problem with $a(x,\theta)$ depending on $x $ and $\theta$ and $k(t,x,\theta,\theta')$ depending on $t$. They proved a stability result in recovering $a$ up to a gauge transformation from the albedo operator and that the scattering time-dependent coefficient can be uniquely determined in a precise subset of the domain.
\smallskip
\subsection{The direct problem and the albedo operator}
We introduce some definitions and notations adopted in this paper. We use the Einstein notation; that is $a_ix^i$ means that we sum the terms $a_ix^i$ for all the indices $i$.  Let $(x^1,\dots, x^n)$ be a local coordinates system in  $ \M$. We denote by $\frac{\p}{\p x^i}$ the coordinates vector fields and by $dx^i$ the coordinates covector fields.
Let $(\theta^1,\dots,\theta^n)$ be the coordinates of a vector $\theta \in T_x\M$; that is $\theta=\theta^i \frac{\p}{\p x^i}$. Then the family of functions $(x^1, \dots,x^n,\theta^1,\dots,\theta^n)$ is a local coordinate system in $T\M$ \textit{associated} with $(x^1,\dots,x^n).$
In the sequel, we will only use coordinates systems on $T\M$ associated with a given system $(x^1, \dots,x^n)$.\\

For $x\in \M$, the Riemannian scalar product on $T_x\M$ induces the volume form on $S_x\M$,
denoted by $\dd \omega_x(\theta)$. In local coordinates, it is given by
$$\dd \omega_x(\theta)=\sqrt{\det\g} \, \sum_{k=1}^n(-1)^k\theta^k \dd \theta^1\wedge\cdots\wedge \widehat{\dd \theta^k}\wedge\cdots\wedge \dd \theta^n.$$
As usual, the notation $\, \widehat{\cdot} \,$ means that the corresponding factor has been dropped.
We introduce the volume form $\dvv$ on the manifold $S\M$ by
$$
\dvv (x,\theta)=\dd\omega_x(\theta)\wedge \dv,
$$
where $\dv$ is the Riemannnian volume form on $\M$. By Liouville's theorem, the form $\dvv$ is preserved by the geodesic flow. The
corresponding volume form on the boundary $\p S\M =\set{(x,\theta)\in S\M,\, x\in\p \M}$ is given
by
$$
\dss=\dd\omega_x(\theta) \wedge \ds,
$$
where $\ds$ is the volume form of $\p \M$.\\
Take an integer $p \geq 1$, and set $$L^p(S\M)=L^p(S\M,\dvv)\;\mbox{  and  } L^p(S_x\M)=L^p(S_x\M,\dd \omega_x(\theta)).$$ Let us introduce the admissible sets for the coefficients $a$ and $k$. We fix $C_0,C_1,C_2>0$
and we set
\begin{equation}\label{A_0}
  \mathscr{A}_0(C_0)=\set{a \in C^{1}(S\M),\;\norm{a}_{L^{\infty}(S\M)}\leq C_0 }.
\end{equation}
For the scattering coefficient, we denote by $\mathfrak{K}$ the set of  functions $k$ satisfying
  \begin{equation}\label{K}
\left\{
\begin{array}{lll}
k \geq 0, \cr
\forall x \in \M, \, k(x,\cdot,\cdot)\in L^2(S_x\M\times S_x\M)\cr
x \longmapsto \| k(x,\cdot,\cdot) \|_{L^2(S_x\M\times S_x\M)} \in L^{\infty}(\M),
\end{array}
\right.
\end{equation} and we consider the admissible set
\begin{equation}\label{k}
  \mathscr{K}(C_1,C_2)=  \set{k\in \mathfrak{K},\;\; \int_{S_x\M}
k(x,\theta,\theta') \dd\omega_x(\theta')
 \leq C_1, \;
    \int_{S_x\M} k(x,\theta,\theta') \dd\omega_x(\theta) \leq C_2}.
\end{equation}
\subsubsection{Admissible manifolds}
To investigate the inverse problem associated to the boundary value problem (\ref{IBVP}), we must specify the geometry in which we will work.
 For $\theta \neq 0$, we define the time-to-boundary functions $\tau_{\pm}:S\M \longrightarrow \R$ by $$\tau_{+}(x,\theta)=\min \set{t>0,\; \gamma_{x,\theta}(t)\in \p\M},\quad \tau_{-}(x,\theta)=-\min\set{t>0,\; \gamma_{x,\theta}(-t)\in \p\M} .$$ The geodesic flow $\phi_t$ defined in $(\ref{flowphit})$ satisfies $\phi_t\circ\phi_s=\phi_{t+s}$ and we have the following properties
$$
\tau_-(x,\theta)=0,\quad (x,\theta)\in\p_-S\M,\quad \tau_+(x,\theta)=0,\quad (x,\theta)\in\p_+S\M,
$$
$$
\tau_-(\phi_t(x,\theta))=\tau_-(x,\theta)-t,\quad \tau_+(\phi_t(x,\theta))=\tau_+(x,\theta)-t.
$$ In particular, for $(x,\theta)\in\p_- S\M$, $\gamma_{x,\theta} : [0,\tau_+(x,\theta)] \to \M$ is the maximal
geodesic satisfying the initial conditions $\gamma_{x,\theta}(0) = x$ and $\dot{\gamma}_{x,\theta}(0) = \theta$.
The life time of an orbit $\gamma_{x,\theta} $ in $\M$ is defined by $\tau(x,\theta)=\tau_+(x,\theta)-\tau_-(x,\theta)$.\\
 If $\p \M$ is $C^1$ then $ \forall (x,\theta) \in S \M,\;\tau(x,\theta) > 0$. If $\M$ is bounded then $x \mapsto \tau(x,\theta)$ is bounded on $\M$. If $\M$ is convex then $x \mapsto \tau(x,\theta)$ is $C^1$ on the set $\set{x\in \M, \tau(x,\theta) < \infty}$.
 In general, the life time of an orbit $\tau(x,\theta)$ might be infinite. In this paragraph, we put restrictions on the metric of $\M$ that ensure that $\tau_{\pm}$ are well-defined and finite. Concerning smoothness properties of  $\tau_{\pm}(x,\theta)$, we can see that these functions are smooth near a point $(x,\theta)$ such that the geodesic $\gamma_{x,\theta}(t)$ intersects $\p\M$ transversely for $t=\tau_{\pm}(x,\theta)$.
For a Riemannian manifold $(\M,\g)$ with boundary $\p\M$, we denote by $\nabla$ the Levi-Civita connection on $(\M,\g)$. For a point $x \in \p\M$, we define the second quadratic form of the boundary on the space $T_x(\p\M)$ by $\Pi(\xi,\xi)=\seq{\nabla_\xi\nu,\xi}$, $\xi\in T_x(\p\M)$. We say that the boundary is strictly convex if the form is positive-definite for all $x \in \p\M$ (see \cite{[Sh1]}).
By strict convexity of $\p\M$, the functions $\tau_{\pm}(x,\xi)$ are smooth on $T\M \setminus T(\p\M).$ In fact, all points of $T\M \cap T(\p\M)$ are singular for $\tau_{\pm}$; since some derivatives of these functions are unbounded in a neighbourhood of such points. In particular, $\tau_{+}$ is smooth on $\p_-SM$, see Lemma 4.1.1 of \cite{[Sh1]}.
\smallskip
We say that a compact Riemannian manifold $(\M,\g)$  with boundary is a convex non-trapping manifold, if the boundary $\p \M$ is strictly convex,
and all geodesics have finite length in $\M$. An important subclass of convex non-trapping manifolds are simple manifolds.
\begin{definition}(Simple manifolds)\label{simple}
We say that a compact Riemannian manifold $(\M,\g)$ with boundary (or that the metric $\g$) is \textit{simple}, if $\p\M$ is strictly convex with respect to $\g$, and for any $x\in \M$, the exponential map $\exp_x:\exp_x^{-1}(\M)\to \M$ is a (global) diffeomorphism. The latter means that every two points $x$, $y \in \M$ are joined by a unique geodesic smoothly depending on $x$ and $y$.
\end{definition}
Any simple $n$-dimensional Riemannian manifold is diffeomorphic to a closed ball in $\R^n$, and that all geodesics in $\M$ have finite length. Another important property is that such manifold  can be extended to a simple manifold $\widetilde{\M}$ such that $\widetilde{\M}^{\textrm{int}}\supset\M$, we refer to \cite{[McST]} for more details.
\subsubsection{Well-posedness of the linear Boltzmann equation}
To well define the albedo operator, we consider the spaces $L^p_\mu(\p_{\pm }S\M)$ of $p$-integrable functions with respect to the measure
 \begin{equation}\label{mesure image}
 \mu(x,\theta)\dss \mbox{   with }\mu(x,\theta)=\abs{\seq{\theta,\nu(x)}},
   \end{equation} (see  \cite{[McST]}) and the spaces $\mathcal{L}^p(\Sigma_\pm)=L^p(0,T;L^p_\mu(\p_+S\M))$ endowed with the norm $$\|f\|_{\mathcal{L}^p(\Sigma_\pm)}=
\|f\|_{L^p(0,T;L^p_\mu(\p_+S\M))}.$$
Let $(x,\theta)\in S\M$, define $(x',\theta')\in \p_- S\M$ such that \begin{equation}\label{x',theta'}
 (x',\theta')=\phi_{\tau_-(x,\theta)}(x,\theta).
    \end{equation}
We have the following usefull lemma.
\begin{lemma}\label{lemma0}\textit{Santalo formula.}
  Let $u\in L^1(S\M)$, then the equality
  $$\int_{\M}\int_{S_x\M} u(x,\theta) \dd \omega_x(\theta)\dv=\mp \int_{\p_{\pm}S\M} \int_{0}^{\tau_{\mp}(x',\theta')} u(\gamma_{x',\theta'}(t),\dot{\gamma}_{x',\theta'}(t)) dt \mu(x',\theta')\dss,$$ holds true.
\end{lemma}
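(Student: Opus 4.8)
The plan is to prove the Santaló formula by the \emph{flow-out} change of variables: every point of $S\M$ is reached from the incoming boundary $\p_-S\M$ by following the geodesic flow for a nonnegative time, and the resulting parametrization pulls the Liouville form $\dvv$ back to the density $\mu\,\dd t\,\dss$. Since the metric is simple (hence convex non-trapping), each $(x,\theta)\in S\M$ has a finite $\tau_-(x,\theta)\le 0$, and $(x',\theta')=\phi_{\tau_-(x,\theta)}(x,\theta)\in\p_-S\M$ is the unique incoming boundary point on its orbit. I would therefore introduce the map
\[
\Psi:\set{(x',\theta',t):(x',\theta')\in\p_-S\M,\ 0\le t\le\tau_+(x',\theta')}\To S\M,\qquad \Psi(x',\theta',t)=\phi_t(x',\theta'),
\]
and first argue that $\Psi$ is a diffeomorphism onto $S\M$ away from the glancing directions $S\M\cap T(\p\M)$, which form a set of measure zero; this is where strict convexity of $\p\M$ enters, guaranteeing that $\tau_\pm$ are smooth off that null set (Lemma 4.1.1 of \cite{[Sh1]}), and that no orbit re-enters, so $\Psi$ is injective.

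The core of the argument is the computation of $\Psi^{*}\dvv$. Let $G$ denote the geodesic vector field generating $\phi_t$, so that $\p_t\Psi=G\circ\Psi$. Since $\dvv$ is a top-degree form on the $(2n-1)$-manifold $S\M$ it is closed, and Liouville's theorem gives $\mathcal{L}_G\dvv=0$; by Cartan's formula $\mathcal{L}_G\dvv=\iota_G\,\dd(\dvv)+\dd(\iota_G\dvv)$, so the $(2n-2)$-form $\iota_G\dvv$ is closed, and since $\mathcal{L}_G$ commutes with $\iota_G$ it is also invariant under $\phi_t$. Writing the time direction separately, this yields $\Psi^{*}\dvv=(\iota_G\dvv)\wedge\dd t$ with the boundary factor independent of $t$, so it suffices to evaluate $\iota_G\dvv$ on $\p_-S\M$ at $t=0$.

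To evaluate $\iota_G\dvv$ at the boundary I would work in coordinates adapted to $\p\M$, using $\dvv=\dd\omega_x(\theta)\wedge\dv$ together with the decomposition of $\dv$ near $\p\M$ into a normal part and the induced boundary form $\ds$. Because $G$ projects in the base to the velocity $\theta$, contracting the normal part of $\dv$ produces exactly the factor $\seq{\theta,\nu(x)}$, while the fibre form $\dd\omega_x(\theta)$ and the tangential boundary form $\ds$ are untouched; hence $(\iota_G\dvv)\big|_{\p_-S\M}=\seq{\theta,\nu(x)}\,\dss$, whose associated positive density is $\mu(x',\theta')\,\dss$ because $\mu=\abs{\seq{\theta,\nu}}$. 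Combining the last two steps, the push-forward under $\Psi$ of the product measure $\dd t\,\dss$ has density $\mu$, and the change-of-variables formula reads
\[
\int_{\M}\int_{S_x\M}u\,\dd\omega_x(\theta)\dv=\int_{\p_-S\M}\int_0^{\tau_+(x',\theta')}u\big(\gamma_{x',\theta'}(t),\dot{\gamma}_{x',\theta'}(t)\big)\,\dd t\ \mu(x',\theta')\,\dss,
\]
which is the lower-sign case. The upper-sign ($\p_+S\M$) case follows identically by running the flow backward, i.e. by the time reversal $t\mapsto-t$, which accounts for the $\mp$ prefactor and the integration limit $\tau_\mp$. I expect the main obstacle to be the contraction identity $(\iota_G\dvv)|_{\p S\M}=\seq{\theta,\nu}\dss$ with the correct orientation, together with the careful treatment of the glancing null set so that $\Psi$ is a genuine diffeomorphism off a set of measure zero; once the orientation conventions for $\dvv$, $\dss$ and $\nu$ are fixed, the remainder is Fubini and the definition of $\Psi$.
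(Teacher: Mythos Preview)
The paper does not actually prove Lemma~\ref{lemma0}; it simply records the Santal\'o formula as a known identity (implicitly referring the reader to the literature, e.g.\ \cite{[Sh1]}) and uses it freely thereafter. So there is no ``paper's own proof'' to compare against.

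Your proposal is the standard proof and is correct in outline. The flow-out map $\Psi$ is a diffeomorphism from $\{(x',\theta',t):(x',\theta')\in\p_-S\M,\ 0\le t\le\tau_+(x',\theta')\}$ onto $S\M$ away from the measure-zero glancing set, by strict convexity and the non-trapping hypothesis; Liouville's theorem then reduces the Jacobian computation to the boundary identity $(\iota_G\dvv)|_{\p S\M}=\seq{\theta,\nu}\,\dss$. One small point worth tightening: the geodesic vector field $G$ has a vertical component $-\Gamma^i_{jk}\theta^j\theta^k\,\p_{\theta^i}$ in addition to the horizontal $\theta^i\p_{x^i}$, so the contraction $\iota_G\dvv$ does pick up a term from $\dd\omega_x(\theta)$, not just from $\dv$; the clean way to see that this extra term vanishes upon restriction to $\p S\M$ is to work in boundary-normal coordinates and observe that the surviving piece of $\iota_G(\dd\omega_x\wedge\dv)$ tangent to $\p S\M$ is exactly $\seq{\theta,\nu}\,\dd\omega_x\wedge\ds$. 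With that caveat, and with the orientation conventions fixed as you indicate, the argument goes through, and the $\p_+S\M$ case follows by the time reversal $(x,\theta)\mapsto(x,-\theta)$, which swaps $\p_\pm S\M$ and flips the sign of $\tau_\mp$.
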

Define the functions space $\mathcal{W}^p$ in which the forward problem is well-posed by setting
$$\mathcal{W}^p=\set{u\in L^p(S\M),\; \D  u \in L^p(S\M);\;\int_{\p S\M}\abs{u(x,\theta)}^p \mu(x,\theta)\dss < \infty }.$$
$\mathcal{W}^p$ is a Banach space with respect to the norm $$\norm{u}_{\mathcal{W}^p}=\norm{u}_{L^p(S\M)}+\norm{\D u}_{L^p(S\M)}.$$ We assume that $a$ and $k$ are in the admissible sets, then the operators $u\mapsto au$ and $u\mapsto \I_k[u]$ are bounded on $L^p(S\M)$.
Consider the unbounded operator $$Tu=\D u+a u-\I_k[u]$$ with domain $$\mathrm{D}(T)=\set{u\in \mathcal{W}^p
;\;u_{|\Sigma_+}=0
}.$$
The following Lemma establishes the unique existence of a weak solution to the problem (\ref{IBVP}). Its proof is based on \cite{[DL]} (Theorem $3$ p.229) and completed in Lemma \ref{lemma}.
\begin{lemma}\label{lemma1}
   Let $T>0$, $p\geq 1$, $a\in \mathscr{A}_0(C_0)$ and $k\in \mathscr{K}(C_1,C_2)$. Assume that $f_-\in \mathcal{L}^p(\Sigma_-)$. Then the  problem (\ref{IBVP}) admits a unique weak solution $$u \in C^1([0,T];L^p(S\M))\cap  C^0([0,T];\mathcal{W}^p).$$
 Moreover, there exist a constant $C>0$ such that
\begin{equation*}\label{estimate Albedo Lp}
   \norm{u}_{\mathcal{L}^p(\Sigma_+)} \leq  C \norm{f_-}_{\mathcal{L}^p(\Sigma_-)}.
\end{equation*}
\end{lemma}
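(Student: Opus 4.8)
The plan is to separate the two claims: existence--uniqueness with the stated regularity, handled by a perturbed-semigroup argument after lifting the incoming datum, and the albedo bound, handled by an $L^p$ energy identity.

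First I would remove the inhomogeneous incoming datum by an explicit lift. Solving the streaming--absorption problem $\p_t w+\D w+aw=0$, $w(0,\cdot,\cdot)=0$, $w|_{\Sigma_-}=f_-$ by the method of characteristics along the geodesic flow $\phi_t$ gives $w\equiv0$ for $t<-\tau_-(x,\theta)$ and, for $t\ge-\tau_-(x,\theta)$,
$$
w(t,x,\theta)=f_-\big(t+\tau_-(x,\theta),\phi_{\tau_-(x,\theta)}(x,\theta)\big)\,\exp\!\Big(-\!\int_{\tau_-(x,\theta)}^{0}a\big(\phi_\sigma(x,\theta)\big)\,\dd\sigma\Big).
$$
Invoking Lemma \ref{lemma0} (Santal\'o's formula), the fact that the boundary map $\p_-S\M\to\p_+S\M$, $(x',\theta')\mapsto\phi_{\tau_+(x',\theta')}(x',\theta')$, preserves the measure $\mu\dss$, and $a\ge0$ (so the exponential factor is $\le1$), one obtains $w\in C^0([0,T];L^p(S\M))$ together with $\norm{w}_{\mathcal{L}^p(\Sigma_+)}\le\norm{f_-}_{\mathcal{L}^p(\Sigma_-)}$.

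Writing $u=w+v$, the remainder solves $\p_t v+\D v+av=\I_k[v]+\I_k[w]$ with $v(0)=0$ and the homogeneous incoming condition $v|_{\Sigma_-}=0$. The realization $A$ of $-(\D+a)$ with domain $\set{v\in\mathcal{W}^p;\; v|_{\Sigma_-}=0}$ generates a strongly continuous contraction semigroup on $L^p(S\M)$ --- this is the content of \cite{[DL]} (Theorem 3, p.\,229), the dissipativity coming from the Green formula $\int_{S\M}(\D v)\abs{v}^{p-2}v\dvv=\tfrac1p\int_{\p S\M}\abs{v}^p\seq{\theta,\nu(x)}\dss$ (valid since $\phi_t$ preserves $\dvv$) together with $a\ge0$. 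Moreover, for $k\in\mathscr{K}(C_1,C_2)$ the operator $\I_k$ is bounded on $L^p(S\M)$: the two marginal bounds $\int_{S_x\M}k\,\dd\omega_x(\theta')\le C_1$ and $\int_{S_x\M}k\,\dd\omega_x(\theta)\le C_2$ yield, via Schur's test interpolated between the endpoints $p=1$ and $p=\infty$, $\norm{\I_k}_{L^p(S_x\M)\to L^p(S_x\M)}\le C_1^{1-1/p}C_2^{1/p}$, which transfers to $L^p(S\M)$ upon integration in $x$. Hence $A+\I_k$ is a bounded perturbation of a generator and itself generates a $C_0$-semigroup, and $v(t)=\int_0^t e^{(t-s)(A+\I_k)}\I_k[w](s)\,\dd s$ is the unique mild solution in $C^0([0,T];L^p(S\M))$; equivalently $v$ solves a Volterra equation whose iterated kernels have norms $\le(Ct)^m/m!$, so the Neumann series converges on all of $[0,T]$, which also gives uniqueness. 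Reading $\D u=-\p_t u-au+\I_k[u]$ off the equation places $u$ in the stated class $C^1([0,T];L^p(S\M))\cap C^0([0,T];\mathcal{W}^p)$.

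For the albedo bound I would argue directly on $u$. Multiplying $(\ref{IBVP})$ by $\abs{u}^{p-2}u$, integrating over $S\M$, and using the same Green identity gives
$$
\frac1p\frac{\dd}{\dd t}\norm{u(t)}_{L^p(S\M)}^p+\frac1p\int_{\p_+S\M}\abs{u}^p\mu\dss+\int_{S\M}a\abs{u}^p\dvv=\frac1p\int_{\p_-S\M}\abs{f_-}^p\mu\dss+\int_{S\M}\I_k[u]\,\abs{u}^{p-2}u\dvv.
$$
Since $a\ge0$ and $\big|\int_{S\M}\I_k[u]\,\abs{u}^{p-2}u\dvv\big|\le\norm{\I_k}\,\norm{u(t)}_{L^p(S\M)}^p$, integrating in $t$ over $(0,T)$, using $u(0)=0$ and Gr\"onwall's inequality, bounds $\sup_{[0,T]}\norm{u(t)}_{L^p(S\M)}$ and then $\norm{u}_{\mathcal{L}^p(\Sigma_+)}^p=\int_0^T\!\int_{\p_+S\M}\abs{u}^p\mu\dss$ by a constant multiple of $\norm{f_-}_{\mathcal{L}^p(\Sigma_-)}^p$, which is the asserted inequality.

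I expect the main obstacle to be rigour at the boundary: the Green identity and all the boundary integrals presuppose that elements of $\mathcal{W}^p$ (with transport regularity $\p_t u,\D u\in L^p$) admit well-defined traces in $L^p_\mu(\p_\pm S\M)$ and that the identity holds for the merely weak solution. For $p=2$ this is the standard transport trace theorem; for general $p$ it needs a renormalization (DiPerna--Lions type) argument to justify the chain rule for $\abs{u}^p$ and the integration by parts, and it is precisely here that the measure $\mu\dss$ and the strict convexity of $\p\M$ (guaranteeing $\tau_\pm$ finite and smooth off the glancing set) enter. I would establish these trace facts first, following \cite{[McST]}; this is the step deferred to the companion Lemma \ref{lemma}.
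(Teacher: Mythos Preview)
Your proposal is correct and follows essentially the same approach as the paper: existence and uniqueness are obtained from the $C_0$-semigroup theory of \cite{[DL]} (Theorem 3, p.\,229) with $\I_k$ a bounded perturbation (via the Schur/H\"older bound $\norm{\I_k}\le C_1^{1/p'}C_2^{1/p}$), and the $\mathcal{L}^p(\Sigma_+)$ estimate is obtained by multiplying the equation by $\abs{u}^{p-2}\bar{u}$, applying the Gauss--Ostrogradskii identity for $\D$, and closing with Gr\"onwall. The only organisational difference is that you first lift the incoming datum $f_-$ by an explicit characteristic solution and then treat the homogeneous-boundary remainder, whereas the paper's Lemma~\ref{lemma} works directly on the inhomogeneous problem and lets $f_-$ appear as the $\p_-S\M$ boundary term in the energy identity; both routes lead to the same estimate, and your caveat about trace regularity is exactly the point the paper leaves implicit.
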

Thereby, the albedo operator
 \begin{equation}\label{Albedo Lp}
\begin{array}{lll}
\mathcal{A}_{a,k}:& \mathcal{L}^p(\Sigma_-)& \longrightarrow \mathcal{L}^p(\Sigma_+),
\cr &f_- &\longmapsto u_{|\Sigma_+},
\end{array}
 \end{equation} is well-defined,  linear and bounded, we denote by $\norm{\mathcal{A}_{a,k}}_p$ its norm.\\
In the study of our inverse problem, we will also deal with  the following initial boundary value problem for the linear Boltzmann equation with source term $q\in L^p(Q)$.
 \begin{equation}\label{IBVPsource}
\left\{
\begin{array}{lll}
\partial_t u(t,x,\theta)+\D u(t,x,\theta)+a(x,\theta)u(t,x,\theta)=\I_k[u](t,x,\theta)+q(t,x,\theta)  & \textrm{in }\; Q_T,\cr
u(0, \cdot,\cdot )=0 & \textrm{in }\; S\M ,\cr
u=0 & \textrm{on } \; \Sigma_-.
\end{array}
\right.
\end{equation}
\begin{lemma}\label{lemma2}
  Let $T>0$, $p\geq 1$, $a\in \mathscr{A}_0(C_0)$ and $k\in \mathscr{K}$. If $q\in L^p(Q_T)$ then the  problem (\ref{IBVPsource}) admits a unique weak solution $$u \in C^1([0,T];L^p(S\M))\cap  C^0([0,T];\mathrm{D}(T)).$$
 Moreover, there exist a constant $C>0$ such that
\begin{equation}\label{estimate source}
  \norm{u(t,\cdot,\cdot)}_{L^p(S\M)}+ \norm{u}_{\mathcal{L}^p(\Sigma_+)} \leq  C \norm{q}_{L^p(Q_T)},\;\forall t\in [0,T].
\end{equation}
\end{lemma}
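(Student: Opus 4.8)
The plan is to recast (\ref{IBVPsource}) as the abstract Cauchy problem $u'(t)+Tu(t)=q(t)$, $u(0)=0$, in the Banach space $X=L^p(S\M)$, and to solve it by semigroup theory, as for the companion Lemma \ref{lemma1}. First I would verify that $-T$ generates a strongly continuous semigroup on $X$. The streaming part $-\D$, taken with the homogeneous incoming condition $u|_{\Sigma_-}=0$ built into $\mathrm{D}(T)$, generates a contraction semigroup: on a convex non-trapping manifold the backward geodesic flow reaches $\p\M$ in finite time, and since the life time $\tau$ is bounded on the compact $\M$ this streaming semigroup is in fact nilpotent, vanishing for times larger than $\sup_{S\M}\tau$. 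As $a\in\mathscr{A}_0(C_0)$ acts as a bounded nonnegative multiplication operator, and for $k$ in the admissible set $\I_k$ is bounded on $X$ with $\norm{\I_k[u]}_{L^p(S\M)}\le C_1^{(p-1)/p}C_2^{1/p}\norm{u}_{L^p(S\M)}$ (Hölder against the measure $k\,\dd\omega_x(\theta')$, using the two bounds in (\ref{k})), the operator $-T=-\D-a+\I_k$ is a bounded perturbation of a generator and hence itself generates a $C_0$-semigroup $(e^{-tT})_{t\ge0}$. Existence, uniqueness and the regularity $u\in C^1([0,T];L^p(S\M))\cap C^0([0,T];\mathrm{D}(T))$ then follow from the abstract theory of \cite{[DL]} (first for smooth $q$, then for $q\in L^p(Q_T)$ by density), with $u(t)=\int_0^t e^{-(t-s)T}q(s)\,\dd s$.

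The quantitative estimate (\ref{estimate source}) I would prove directly by an $L^p$ energy argument. Multiplying the equation by $p\abs{u}^{p-2}u$ and integrating over $S\M$ yields
\begin{equation*}
\frac{\dd}{\dd t}\norm{u(t)}_{L^p(S\M)}^p+\int_{S\M}\D(\abs{u}^p)\dvv+p\int_{S\M}a\abs{u}^p\dvv=p\int_{S\M}\big(\I_k[u]+q\big)\abs{u}^{p-2}u\dvv.
\end{equation*}
The transport term is the decisive one: since the Liouville form $\dvv$ is invariant under the geodesic flow, $\int_{S\M}\D(\abs{u}^p)\dvv$ equals the boundary flux $\int_{\p S\M}\abs{u}^p\seq{\theta,\nu(x)}\dss$, which by the incoming condition $u|_{\Sigma_-}=0$ reduces to the nonnegative outgoing contribution $\int_{\p_+S\M}\abs{u}^p\mu(x,\theta)\dss$; this is precisely the content of Santalo's formula (Lemma \ref{lemma0}). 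Using $a\ge0$, dropping these nonnegative terms, and estimating the right-hand side by Hölder, namely $\int_{S\M}\I_k[u]\,\abs{u}^{p-2}u\,\dvv\le\norm{\I_k[u]}_{L^p}\norm{u}_{L^p}^{p-1}\le C\norm{u}_{L^p}^p$ and likewise for $q$, I obtain $\frac{\dd}{\dd t}\norm{u}_{L^p(S\M)}\le C\norm{u}_{L^p(S\M)}+\norm{q(t)}_{L^p(S\M)}$. Gronwall's inequality together with Hölder in time then gives $\norm{u(t)}_{L^p(S\M)}\le C\norm{q}_{L^p(Q_T)}$ for every $t\in[0,T]$.

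The boundary part of (\ref{estimate source}) comes from the same identity, this time integrated over $(0,T)$ and with the outgoing boundary term retained. Its left-hand side then contains $\norm{u}_{\mathcal{L}^p(\Sigma_+)}^p=\int_0^T\!\int_{\p_+S\M}\abs{u}^p\mu(x,\theta)\dss\,\dd t$; transferring the remaining terms to the right and substituting the interior bound just obtained controls $\norm{u}_{\mathcal{L}^p(\Sigma_+)}$ by $\norm{q}_{L^p(Q_T)}$, which completes (\ref{estimate source}).

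I expect the main difficulty to lie in justifying the boundary objects that the energy identity rests on: that functions of $\mathcal{W}^p$ (for which only $\D u\in L^p(S\M)$ is known) possess well-defined traces in $L^p_\mu(\p_\pm S\M)$ and obey the Green formula $\int_{S\M}\D v\dvv=\int_{\p S\M}v\seq{\theta,\nu(x)}\dss$. I would settle this by density, proving the estimate first for smooth $u$ (where the finiteness and smoothness of $\tau_\pm$ on the simple, convex non-trapping manifold and Santalo's formula make the computation rigorous) and then passing to the limit. A secondary point is that the nonlocal operator $\I_k$ couples all directions $\theta$, so the Gronwall step must absorb it with a constant independent of the direction; this is exactly what the two integral bounds $C_1,C_2$ in (\ref{k}) provide.
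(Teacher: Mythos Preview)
Your proposal is correct and follows essentially the same route as the paper's proof (given in the Appendix as Lemma~\ref{lemma}): semigroup theory à la Dautray--Lions for existence and uniqueness, then an $L^p$ energy identity obtained by multiplying by $\abs{u}^{p-2}\overline{u}$, combined with Gr\"onwall for the interior bound and time-integration for the outgoing trace.

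One small point of comparison: the identity $\int_{S\M}\D(\abs{u}^p)\dvv=\int_{\p S\M}\abs{u}^p\seq{\theta,\nu}\dss$ that you attribute to Santalo's formula is in the paper obtained instead from the Gauss--Ostrogradski\u{\i} formula for the horizontal divergence (formula (\ref{divergence formula horizontal})), after writing $\D(\abs{u}^p)=\overset{\hh}{\dive}(\theta\abs{u}^p)$ via $\overset{\hh}{\nabla}_k\theta^i=0$. Both arguments are valid and are two faces of the same Liouville-invariance fact; the divergence-theorem formulation is slightly more direct here since it gives the boundary flux in one line without parametrising by $\tau_\pm$. Also, the paper does not drop the $a$-term using positivity but simply absorbs $\norm{a}_{L^\infty}\norm{u}_{L^p}^p$ into the Gr\"onwall constant, and it works with $\norm{u}_{L^p}^p$ rather than $\norm{u}_{L^p}$, which sidesteps the division by $\norm{u}_{L^p}^{p-1}$ implicit in your differential inequality. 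These are cosmetic differences; your argument goes through as written.
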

\subsection{Main results}
\begin{theorem}\label{th0}
 Let $T>\textrm{Diam}_\g(\M)$. There exist $C=C(\M,T,C_0)>0$ such that for any $k_1,k_2\in\mathscr{K}(C_1,C_2)$ and
$a_1,a_2\in\mathscr{A}_0(C_0)$ satisfying  \begin{equation}\label{cond th0}
              \p^\alpha_{x}a_1(x,\theta)= \p^\alpha_{x}a_2(x,\theta), \;(x,\theta) \in \p S\M, \alpha\in \N^n,\abs{\alpha} \leq 1,
 \end{equation} we have
\begin{equation}\label{ineq -T T}
\Big| \displaystyle \int_{-T}^{T}\, a(\gamma_{x,\theta}(s),\dot{\gamma}_{x,\theta}(s))\, ds  \Big|\leq C\norm{\mathcal{A}_{a_1,k_1}-\mathcal{A}_{a_2,k_2}}_1;\;a=a_1-a_2,
\end{equation} for any $(x,\theta)\in S\M$.
\end{theorem}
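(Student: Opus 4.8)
The plan is to combine an integral identity coming from a backward adjoint transport problem with a family of concentrating geometric optics solutions localizing along a fixed geodesic, so that the ballistic contribution reproduces the geodesic ray transform of $a=a_1-a_2$ while the scattering contribution disappears in the limit.

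First I would set up the integral identity. Fix boundary data $f_-$ and let $u_1,u_2$ be the solutions of (\ref{IBVP}) for the pairs $(a_1,k_1)$ and $(a_2,k_2)$; writing $a=a_1-a_2$, $k=k_1-k_2$, the difference $w=u_1-u_2$ solves the source problem (\ref{IBVPsource}) associated with $(a_1,k_1)$, with source $q=-a\,u_2+\I_k[u_2]$, zero initial and incoming data, and $w_{|\Sigma_+}=(\mathcal{A}_{a_1,k_1}-\mathcal{A}_{a_2,k_2})f_-$. I then introduce $v$ solving the backward adjoint equation $-\p_t v-\D v+a_1 v=\I_{k_1}^{*}[v]$ in $Q_T$ (with $\I_{k_1}^{*}$ the angular transpose, well-posed by the analogue of Lemma \ref{lemma1}), with $v(T,\cdot)=0$ and a prescribed outgoing trace on $\Sigma_+$. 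Pairing the equation for $w$ against $v$ and integrating by parts, using that $\D$ is divergence free for $\dvv$ (Liouville), that $w(0,\cdot)=v(T,\cdot)=0$ annihilates the temporal boundary and that $w_{|\Sigma_-}=0$ annihilates the incoming boundary, gives
$$\int_{Q_T}\big(-a\,u_2+\I_k[u_2]\big)\,v\,\dvv\,\dd t=\int_{\Sigma_+}(\mathcal{A}_{a_1,k_1}-\mathcal{A}_{a_2,k_2})f_-\cdot v\,\mu\,\dss\,\dd t,$$
whose right-hand side is bounded by $\norm{\mathcal{A}_{a_1,k_1}-\mathcal{A}_{a_2,k_2}}_1\,\norm{f_-}_{\mathcal{L}^1(\Sigma_-)}\,\norm{v}_{L^\infty(\Sigma_+)}$.

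Next I would build the geometric optics solutions. Fix $(x,\theta)\in S\M$, extend $\M$ to a slightly larger simple manifold, and let $(x',\theta')=\phi_{\tau_-(x,\theta)}(x,\theta)\in\p_-S\M$ be the entry point of the maximal geodesic through $(x,\theta)$. In Fermi-type coordinates around the orbit $s\mapsto\phi_s(x',\theta')$, for small $\epsilon>0$ I take transverse profiles of width $\epsilon$, $L^2$-normalized. For the forward problem I set $u_2=u_2^{0}+r_2$, where $u_2^{0}$ solves $\p_t u_2^{0}+\D u_2^{0}+a_2 u_2^{0}=0$ with concentrating incoming data $f_-$ centred at $(t_0,x',\theta')$, so that along the orbit $u_2^{0}$ carries the factor $\exp\big(-\int_0^s a_2(\phi_\sigma(x',\theta'))\,\dd\sigma\big)$, while $r_2$ absorbs $\I_{k_2}[u_2^{0}]$ and is controlled by Lemma \ref{lemma2}. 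Symmetrically I build $v=v^{0}+\rho$ concentrating on the same orbit and propagating so that, for each $t$, $v(t,\cdot)$ and $u_2(t,\cdot)$ localize at the same phase point $\phi_{t-t_0}(x',\theta')$; its principal part carries $\exp\big(+\int_0^s a_1(\phi_\sigma(x',\theta'))\,\dd\sigma\big)$. The amplitudes and normalizing Jacobian are chosen so that $\norm{f_-}_{\mathcal{L}^1(\Sigma_-)}\sim\epsilon^{n-1}$ and $\norm{v}_{L^\infty(\Sigma_+)}\sim\epsilon^{-(n-1)}$, keeping their product bounded as $\epsilon\to0$.

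Passing to the limit, the product $u_2 v$ converges as a measure to arc length along the orbit weighted by $\exp\big(\int_0^s a(\phi_\sigma(x',\theta'))\,\dd\sigma\big)$, so by exactness of the integrand the ballistic term tends to
$$-\int_0^{\tau_+(x',\theta')}a(\phi_s(x',\theta'))\,e^{\int_0^s a(\phi_\sigma(x',\theta'))\,\dd\sigma}\,\dd s=1-\exp\Big(\int_0^{\tau_+(x',\theta')}a(\phi_s(x',\theta'))\,\dd s\Big).$$
The decisive point is that the scattering contribution $\int_{Q_T}\I_k[u_2]\,v$ vanishes: the angular averaging in $\I_k$ destroys the directional concentration of $u_2$, so that pairing against the direction-concentrated $v$ produces an extra factor scaling like $\epsilon^{n-1}\to0$ for $n\ge2$; the remainders $r_2,\rho$ are likewise lower order by Lemma \ref{lemma2} together with the $L^2$ bound on $\I_k$ built into $\mathscr{K}$. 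The boundary condition (\ref{cond th0}) lets me extend $a$ by zero across $\p\M$ as a $C^1$ function, whence $\int_0^{\tau_+(x',\theta')}a(\phi_s(x',\theta'))\,\dd s=\int_{-T}^{T}a(\gamma_{x,\theta}(s))\,\dd s$, using $T>\mathrm{Diam}_\g(\M)$ and that geodesics do not re-enter a simple manifold. Combining with the bound on the right-hand side gives $\big|\exp\big(\int_{-T}^{T}a(\gamma_{x,\theta}(s))\,\dd s\big)-1\big|\le C\norm{\mathcal{A}_{a_1,k_1}-\mathcal{A}_{a_2,k_2}}_1$, and since $\big|\int_{-T}^{T}a\,\dd s\big|$ is a priori bounded by some $M=M(T,C_0)$, the elementary inequality $\abs{X}\le e^{M}\abs{e^{X}-1}$ for $\abs{X}\le M$ yields (\ref{ineq -T T}).

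The hardest part will be the construction and error control of the geometric optics solutions in the Riemannian setting: choosing the Fermi coordinates and amplitudes so the Jacobians normalize correctly on the extended manifold, and above all quantifying that the scattering term and the remainders are genuinely lower order in $\epsilon$. The scattering estimate is the crux, since one must show that the loss of angular localization under $\I_k$ costs a full factor $\epsilon^{n-1}$ — which is precisely what forces $n\ge2$ and makes the ballistic signal dominate.
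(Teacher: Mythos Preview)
Your strategy reaches the same endpoint (the inequality $|e^{X}-1|\le C\norm{\mathcal{A}_{a_1,k_1}-\mathcal{A}_{a_2,k_2}}_1$ with $X=\int_{-T}^{T}a$, followed by $|X|\le e^{M}|e^{X}-1|$), but the route is genuinely different from the paper's. The paper does \emph{not} use concentrating beams in Fermi coordinates with a small width parameter $\epsilon$. Instead it builds \emph{oscillatory} geometric optics solutions $u_j=\alpha_j\beta_{\pm a_j}e^{\pm i\lambda(t+\tau_-(x,\theta))}+r_{j,\lambda}$, where $\alpha_j(t,x,\theta)=\phi_j(\phi_{-t}(x,\theta))$ with $\phi_j$ supported in a collar $\widetilde{\M}\setminus\M$. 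The scattering term is killed by an oscillatory mechanism: since $\I_k$ is compact (Hilbert--Schmidt in the angular variable) and $e^{i\lambda\tau_-(x,\cdot)}\rightharpoonup 0$ weakly in $L^2(S_x\M)$ by a Riemann--Lebesgue argument, one obtains $\norm{\I_k[\alpha\beta\psi_\lambda]}_{L^2(Q_T)}\to 0$ and hence all remainder contributions vanish as $\lambda\to\infty$. Only \emph{after} the scattering is removed does the paper localize in direction, taking $\phi_1(x,\theta)=\rho_1(x)\Psi_h(\xi,\theta)$ with $\Psi_h$ the Poisson kernel on the sphere and, crucially, $\phi_2(x,\theta)=\rho_2(x)$ independent of $\theta$, so that $\norm{\phi_1}_{L^1}\norm{\phi_2}_{L^\infty}$ stays bounded as $h\to 1$. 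The pointwise estimate is first obtained for base points in the collar $\widetilde{\M}\setminus\M$ via a bilinear-form duality in $\rho_1,\rho_2$, then transferred to all of $S\M$ by sliding along the geodesic.

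What each approach buys: the paper's two-parameter scheme ($\lambda\to\infty$ then $h\to 1$) cleanly decouples the suppression of scattering from the angular localization, and needs only the compactness of $\I_k$ coming from $k(x,\cdot,\cdot)\in L^2$; no pointwise control on $k$ is required. Your single-parameter beam approach is geometrically more direct, but the step you yourself flag as hardest---showing $\int_{Q_T}\I_k[u_2]\,v$ gains a full $\epsilon^{n-1}$---does not quite close under the scalings you announce: with $\norm{f_-}_{\mathcal{L}^1}\sim\epsilon^{n-1}$ and $\norm{v}_{L^\infty}\sim\epsilon^{-(n-1)}$, the natural $L^2$--$L^2$ and $L^1$--$L^\infty$ pairings for $\I_k[u_2^0]$ against $v^0$ are only $O(1)$ with a merely $L^2$ kernel, so you would need either an explicit compactness/weak-convergence step (which brings you back to the paper's mechanism) or stronger regularity on $k$ than $\mathscr{K}(C_1,C_2)$ provides. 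The paper sidesteps this by concentrating in $\theta$ on only \emph{one} of the two test solutions.
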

From this Theorem  we derive that there is an objection to the unique determination of $a$ when this last is anisotropic; that is it depends on both $x$ and $\theta$. More precisely, we prove that an anisotropic absorption coefficient $a$ can be uniquely determined from the albedo operator up to a gauge transformation.
\begin{corollary}\label{corol0}
If we   assume that $\mathcal{A}_{a_1,k_1}=\mathcal{A}_{a_2,k_2}$, then there exist a positive function $l(x,\theta)\in L^{\infty}(S\M)$ with $\D l \in L^{\infty}(S\M)$ and $l(x,\theta)=1$ for $(x,\theta)\in \p S\M$, such that we have   $$a_1(x,\theta)=a_2(x,\theta)+\D (\log l(x,\theta)),\;\forall (x,\theta) \in S\M.$$ \end{corollary}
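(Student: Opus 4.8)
The plan is to manufacture the gauge function $l$ by integrating $a:=a_1-a_2$ along the geodesic flow, and to use Theorem \ref{th0} to force the normalization $l=1$ on $\p S\M$. Since by hypothesis $\mathcal{A}_{a_1,k_1}=\mathcal{A}_{a_2,k_2}$, the right-hand side of (\ref{ineq -T T}) vanishes and Theorem \ref{th0} yields
\begin{equation*}
\int_{-T}^{T} a(\gamma_{x,\theta}(s),\dot{\gamma}_{x,\theta}(s))\,ds=0,\qquad (x,\theta)\in S\M.
\end{equation*}
Because $T>\textrm{Diam}_\g(\M)$ while the full chord $[\tau_-(x,\theta),\tau_+(x,\theta)]$ has length at most $\textrm{Diam}_\g(\M)$ and sits inside $(-T,T)$, and $a$ vanishes off $\M$, this says precisely that the geodesic ray transform of $a$ vanishes: $\int_{\tau_-(x,\theta)}^{\tau_+(x,\theta)} a(\phi_s(x,\theta))\,ds=0$ for every $(x,\theta)\in S\M$.

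Granting this, I would define
\begin{equation*}
\psi(x,\theta)=\int_{\tau_-(x,\theta)}^{0} a(\phi_s(x,\theta))\,ds,\qquad l(x,\theta)=e^{\psi(x,\theta)},
\end{equation*}
so that $l>0$ automatically. Using $\phi_s\circ\phi_t=\phi_{s+t}$ and the flow identity $\tau_-(\phi_t(x,\theta))=\tau_-(x,\theta)-t$, the change of variables $r=s+t$ gives $\psi(\phi_t(x,\theta))=\int_{\tau_-(x,\theta)}^{t}a(\phi_r(x,\theta))\,dr$; differentiating at $t=0$ produces $\D\psi=a$, hence $\D\log l=a_1-a_2$, which is the asserted identity $a_1=a_2+\D(\log l)$. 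For the normalization, on $\p_-S\M$ one has $\tau_-=0$ so $\psi=0$, whereas on $\p_+S\M$ one has $\tau_+=0$, so $\psi(x,\theta)=\int_{\tau_-(x,\theta)}^{0}a(\phi_s(x,\theta))\,ds=\int_{\tau_-(x,\theta)}^{\tau_+(x,\theta)}a(\phi_s(x,\theta))\,ds=0$ by the vanishing of the ray transform; thus $l=1$ on all of $\p S\M$. Finally, since $\abs{a}\leq 2C_0$ and $\tau_-$ is bounded by the finite life time, $\psi$ is bounded, so $l\in L^\infty(S\M)$ and $\D l=l\,\D\psi=l\,a\in L^\infty(S\M)$; the singularities of $\tau_\pm$ concentrate on the negligible glancing set $T\M\cap T(\p\M)$, across which $\psi$ stays bounded, so these $L^\infty$ bounds are unaffected.

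The step I expect to be delicate is the very invocation of Theorem \ref{th0}: that theorem requires the matching of boundary one-jets (\ref{cond th0}), namely $\p_x^\alpha a_1=\p_x^\alpha a_2$ on $\p S\M$ for $\abs{\alpha}\leq 1$, whereas the corollary assumes only $\mathcal{A}_{a_1,k_1}=\mathcal{A}_{a_2,k_2}$. This gap should be closed by a boundary-determination argument, reading off $a$ and its first derivatives on $\p S\M$ from the leading ballistic singularity of the Schwartz kernel of the albedo operator, which encodes the attenuation $\exp(-\int a)$ along short chords meeting $\p\M$; alternatively one regards (\ref{cond th0}) as inherited from the standing hypotheses. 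Once (\ref{cond th0}) is in force, the remaining construction is routine, and only the measure-zero regularity of $l$ at glancing directions needs the small verification indicated above.
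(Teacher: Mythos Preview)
Your proof is correct and follows essentially the same approach as the paper: define the potential $\psi(x,\theta)=\int_{\tau_-(x,\theta)}^{0}a(\phi_s(x,\theta))\,ds$ (the paper writes the same function with the substitution $s\mapsto -s+\tau_-(x,\theta)$), verify $\D\psi=a$, and use the vanishing of the ray transform from Theorem~\ref{th0} to force $\psi=0$ on $\p_+S\M$ as well as $\p_-S\M$, then set $l=e^{\psi}$. Your concern about the hypothesis~(\ref{cond th0}) is legitimate: the paper does not address it either and simply treats the corollary as inheriting the standing assumptions of Theorem~\ref{th0}, so your second reading (``inherited from the standing hypotheses'') matches the paper's intent.
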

To remove this lack of uniqueness, we assume that the absorption coefficient $a$ is isotropic; that is it does not depend on the velocity variable, i.e. $a(x,\theta)=a(x)$. Then let  $\eta > 1+\frac{n}{2}$ and set
\begin{equation}\label{A}
  \mathscr{A}(C_0,\eta)=\set{a \in W^{2,\infty}(\M),\;\norm{a}_{W^{2,\infty}(\M)}+ \norm{a}_{H^{\eta}(\M)}\leq C_0 }.
\end{equation}
Let us introduce some notations. Given $x\in\M$ and a 2-plane $\pi\subset T_x\M$, denote by $K(x,\pi)$ the sectional curvature of $\pi$ at $x$. For $\theta\in S_x\M$, put
$$
K(x,\theta)=\sup_{\pi;\,\theta \in\pi}\,K(x,\pi),\quad K^+(x,\theta)=\max\{0,K(x,\theta)\}.
$$
Define the following characteristic:
$$
k^+(\M,\g)=\sup_\gamma\int_{\tau_1}^{\tau_2}tK^+(\gamma(t),\dot{\gamma}(t))dt,
$$
where $\gamma\,:\,[\tau_1,\tau_2]\to\M$ ranges in the set of all unit speed geodesics in $\M$. We have the following stability result for the absorption coefficient $a$ when it depends only on the position.
\begin{theorem}\label{th1}
Let $(\M,\g)$ be a simple compact Riemannian manifold  of dimension $n \geq 2$, with $C^{\infty}$-smooth boundary $\p \M$, such that $k^+(\M,\g)<1$. Assume that $\g$ is the identity near the boundary $\p \M$ and let $T>\textrm{Diam}_\g(\M)$.
There exist  $C=C(\M,T,C_0,C_1,C_2)>0$ such that for any $k_1,k_2\in\mathscr{K}(C_1,C_2)$ and
$a_1,a_2\in\mathscr{A}_0(C_0,\eta)$ satisfying  \begin{equation}\label{cond th1}
              \p^\alpha_{x}a_1(x)= \p^\alpha_{x}a_2(x), \;x \in  \p \M, \;  \alpha\in \N^n,\abs{\alpha} \leq 2,
 \end{equation} the following estimate
\begin{equation*}\label{estimate1 th1}
\norm{a_1-a_2}_{L^2(\M)}\leq C\norm{\mathcal{A}_{a_1,k_1}-\mathcal{A}_{a_2,k_2}}_1^{\frac{1}{2}},
\end{equation*}
holds true.
Moreover, there exist $\delta\in(0,1)$ such that
\begin{equation*}\label{estimate2 th1}
\norm{a_1-a_2}_{C^0(\M)}\leq C\norm{\mathcal{A}_{a_1,k_1}-\mathcal{A}_{a_2,k_2}}_1^\frac{\delta}{2}.
\end{equation*}
Therefore
\begin{equation}\label{corol}
 \mbox{If }\mathcal{A}_{a_1,k_1}=\mathcal{A}_{a_2,k_2} \;\mbox{ then } a_1=a_2 \, \mbox{ in } \M.
\end{equation}
\end{theorem}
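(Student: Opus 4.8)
The plan is to deduce the stability of $a=a_1-a_2$ from Theorem \ref{th0} by identifying the integral in \eqref{ineq -T T} with the geodesic ray transform of $a$ and then inverting it. Write $\epsilon=\norm{\mathcal{A}_{a_1,k_1}-\mathcal{A}_{a_2,k_2}}_1$. Since $a_1,a_2$ depend only on $x$, along every geodesic one has $a(\gamma_{x,\theta}(s),\dot{\gamma}_{x,\theta}(s))=a(\gamma_{x,\theta}(s))$, and because $T>\textrm{Diam}_\g(\M)$ each maximal geodesic segment in $\M$ is contained in the window $[-T,T]$. Hence the left-hand side of \eqref{ineq -T T} equals the geodesic ray transform
\[
I_\g a(x,\theta)=\int a(\gamma_{x,\theta}(s))\,ds,
\]
and Theorem \ref{th0} furnishes the pointwise bound $\abs{I_\g a(x,\theta)}\le C\epsilon$ for all $(x,\theta)\in S\M$. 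Integrating over the compact boundary piece $\p_+S\M$ against the finite measure $\mu\dss$ gives the weak control $\norm{I_\g a}_{L^2(\p_+S\M)}\le C\epsilon$.

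To invert $I_\g$ I first move to the enlarged simple manifold $\widetilde{\M}\supset\M$. The matching conditions \eqref{cond th1} force $\p^\alpha_x a=0$ on $\p\M$ for $\abs{\alpha}\le 2$, so together with the hypothesis that $\g$ is the identity near $\p\M$ the extension of $a$ by zero across $\p\M$ stays in $H^\eta\cap W^{2,\infty}$, is supported in $\M^{\textrm{int}}$, and has norm bounded by $2C_0$. On $\widetilde{\M}$ the condition $k^+(\M,\g)<1$ guarantees that $I_\g$ is injective on functions and obeys the stability estimate $\norm{a}_{L^2(\M)}\le C\norm{I_\g a}_{H^1}$ (Sharafutdinov, Stefanov--Uhlmann). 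The difficulty is that Theorem \ref{th0} only delivers $L^2$ (indeed $L^\infty$) control of the data, whereas the stability estimate asks for its $H^1$ norm; I bridge this gap by interpolation. The a priori regularity $\norm{a}_{H^\eta}\le 2C_0$ with $\eta>1+\frac{n}{2}\ge 2$ and the continuity of the ray transform yield $\norm{I_\g a}_{H^2}\le C$, whence, interpolating $H^1=[L^2,H^2]_{1/2}$,
\[
\norm{I_\g a}_{H^1}\le C\,\norm{I_\g a}_{L^2}^{1/2}\,\norm{I_\g a}_{H^2}^{1/2}\le C\,\epsilon^{1/2}.
\]
Combined with the stability estimate this produces $\norm{a_1-a_2}_{L^2(\M)}\le C\epsilon^{1/2}$, the first asserted inequality.

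The $C^0$ bound follows by a second interpolation. Fix $s\in(\tfrac{n}{2},\eta)$, which is possible since $\eta>1+\frac{n}{2}>\frac{n}{2}$, and use the Sobolev embedding $H^s(\M)\hookrightarrow C^0(\M)$ together with the convexity inequality $\norm{a}_{H^s}\le\norm{a}_{L^2}^{1-s/\eta}\norm{a}_{H^\eta}^{s/\eta}$. With $\delta:=1-\tfrac{s}{\eta}\in(0,1)$ and the a priori bound this gives
\[
\norm{a_1-a_2}_{C^0(\M)}\le C\,\norm{a}_{L^2}^{\delta}\le C\,\epsilon^{\delta/2}.
\]
Finally, taking $\mathcal{A}_{a_1,k_1}=\mathcal{A}_{a_2,k_2}$ makes $\epsilon=0$, so $\norm{a_1-a_2}_{L^2(\M)}=0$ and $a_1=a_2$ in $\M$, which is \eqref{corol}.

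The main obstacle is the inversion step: establishing (or correctly invoking) the injectivity and stability estimate for the geodesic ray transform under the curvature condition $k^+(\M,\g)<1$, and carrying out the extension to $\widetilde{\M}$ so that this estimate genuinely applies to $a_1-a_2$. Once these are in place, the two interpolation inequalities that convert the $L^2$ control of the data into the $L^2$ and $C^0$ stability of $a$ — and thereby produce the exponents $\tfrac12$ and $\tfrac{\delta}{2}$ — are routine.
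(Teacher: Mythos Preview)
Your proposal is correct and follows essentially the same route as the paper: identify the bound of Theorem~\ref{th0} with $L^2$ control of the geodesic ray transform $\X a$, invoke the stability estimate $\norm{a}_{L^2}\le C\norm{\X a}_{H^1}$ valid under $k^+(\M,\g)<1$, bridge the gap from $L^2$ to $H^1$ data by interpolating against the a priori $H^2$ bound (coming from $a\in\mathscr{A}(C_0,\eta)$ with $\eta>1+\tfrac{n}{2}\ge 2$ and the continuity of $\X$), and then upgrade to $C^0$ by Sobolev embedding plus a second interpolation. The only cosmetic differences are that the paper parametrizes the ray transform on $\p_-S\M$ rather than $\p_+S\M$ and chooses the intermediate Sobolev index in $(1+\tfrac{n}{2},\eta)$ rather than $(\tfrac{n}{2},\eta)$; neither affects the argument.
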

Now we no longer suppose that $a$ is isotropic and we assume that the scattering coefficient is of the form $k(x,\theta,\theta')=\sigma(x)\chi(\theta,\theta')$, where $\chi(\theta,\theta')$ is supposed known. We will need some regularity conditions on $k$ to  recover its spatial part $\sigma$.
\begin{theorem}\label{th2}
Let $(\M,\g)$ be a simple compact Riemannian manifold  of dimension $n \geq 2$, with $C^{\infty}$-smooth boundary $\p \M$, such that $k^+(\M,\g)<1$. Assume that $\g$ is the identity near the boundary $\p \M$. Let $T>\textrm{Diam}_\g(\M)$, $a_1,a_2\in\mathscr{A}_0(C_0)$ and $k_j\in \mathscr{K}(C_1,C_2)$, $j\in{1,2},$ such that $k_j(x,\theta,\theta')=\sigma_j(x)\chi(\theta,\theta')$ with $\sigma_j\in  C^1(\M)$, $\chi\in C(S_x(\M) \times S_x(\M)),\;\forall x \in \M$ and such that $\chi(\theta,\theta) \neq 0,\; \forall \theta \in S_x\M$. We also suppose that we have
\begin{equation*}
\p^\alpha_{x}a_1(x,\theta)= \p^\alpha_{x}a_2(x,\theta),\;\;\p^\alpha_{x}\sigma_1(x)= \p^\alpha_{x}\sigma_2(x), \;x\in \p \M, \alpha\in \N^n,\abs{\alpha} \leq 1.
\end{equation*}
We claim that if $\mathcal{A}_{a_2,k_2}=\mathcal{A}_{a_1,k_1}$ then $\sigma_1=\sigma_2$ on $\M$.\\
In particular, if $a_1$ and $a_2$ are isotropic then
$\mathcal{A}_{a_1,k_1}=\mathcal{A}_{a_2,k_2}$ implies that $(a_1,k_1)=(a_2,k_2)$.
 \end{theorem}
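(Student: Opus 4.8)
The plan is to peel the albedo operator in two stages: first use Theorem~\ref{th0} to pin down the absorption up to a gauge, and then extract the \emph{single-scattering} part of $\mathcal{A}_{a,k}$, which carries the diagonal values $k(x,\theta,\theta)=\sigma(x)\chi(\theta,\theta)$, by pairing concentrating geometric optics solutions of the forward and adjoint Boltzmann equations. Since $\mathcal{A}_{a_1,k_1}=\mathcal{A}_{a_2,k_2}$ and the boundary one-jets of $a_1,a_2$ agree, Theorem~\ref{th0} and Corollary~\ref{corol0} produce a positive $l\in L^{\infty}(S\M)$ with $\D l\in L^{\infty}(S\M)$, $l=1$ on $\p S\M$, and $a_1=a_2+\D\log l$. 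Applying the gauge transformation (\ref{gauge}) to $(a_1,k_1)$ replaces it by $(\tilde a_1,\tilde k_1)$ with $\tilde a_1=a_2$ and $\tilde k_1(x,\theta,\theta')=\sigma_1(x)\chi(\theta,\theta')\,l(x,\theta)/l(x,\theta')$; because $l=1$ on $\p S\M$ the albedo operator is unchanged, so both problems now carry the common absorption $a:=a_2$ with $\mathcal{A}_{a,\tilde k_1}=\mathcal{A}_{a,k_2}$. The crucial point is that $l(x,\theta)/l(x,\theta')$ equals $1$ on the diagonal, so $\tilde k_1(x,\theta,\theta)=\sigma_1(x)\chi(\theta,\theta)$ and $k_2(x,\theta,\theta)=\sigma_2(x)\chi(\theta,\theta)$, and it is exactly this diagonal that I aim to recover.

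Next I set up the integral identity. Let $u_1$ solve (\ref{IBVP}) with $(a,\tilde k_1)$ and incoming data $f_-$, and let $v$ solve the adjoint equation $-\p_t v-\D v+a v=\I_{k_2^*}[v]$, where $k_2^*(x,\theta,\theta')=k_2(x,\theta',\theta)$, with $v(T,\cdot,\cdot)=0$ and a prescribed trace on $\Sigma_+$. The difference $w=u_1-u_2$ of the two forward solutions with the same data $f_-$ solves the $(a,k_2)$-equation with source $\I_{\tilde k_1-k_2}[u_1]$, together with $w(0,\cdot,\cdot)=0$ and $w=0$ on all of $\Sigma$ (the latter because the albedo operators agree). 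Pairing $w$ against $v$ over $Q_T$ and integrating by parts, the time and geodesic derivatives are handled by Santalo's formula (Lemma~\ref{lemma0}), all initial, final and boundary contributions vanish, and the $\I_{k_2}[w]$ term is absorbed by the adjoint equation, leaving
\begin{equation*}
\int_{Q_T}\I_{\tilde k_1-k_2}[u_1]\,v\,\dd t\,\dvv=0 .
\end{equation*}
The well-posedness and a priori bounds of Lemmas~\ref{lemma1} and \ref{lemma2} make these manipulations licit.

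I then specialize $u_1$ and $v$ to geometric optics solutions that concentrate, as a parameter $\varepsilon\to0$, on a single maximal geodesic $\gamma=\gamma_{x_0,\theta_0}$, $(x_0,\theta_0)\in\p_-S\M$, and its direction field $s\mapsto(\gamma(s),\dot\gamma(s))$. The leading ballistic terms solve the scattering-free transport ODE along $\gamma$, $u_1$ carrying the forward attenuation $e^{-\int a}$ and $v$ the backward one, arranged so that the product of their amplitudes equals the constant $e^{-\int_\gamma a}$ along $\gamma$; the multiply scattered remainders, fed into the source term, are estimated by Lemma~\ref{lemma2} and are of lower order. In the limit the inner integral $\I_{\tilde k_1-k_2}[u_1]$ samples the kernel at $\theta'=\dot\gamma$, the outer pairing against $v$ forces $\theta=\dot\gamma$, and since $T>\mathrm{Diam}_\g(\M)$ the whole of $\gamma$ is captured. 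The identity collapses to
\begin{equation*}
e^{-\int_\gamma a}\int_{\tau_-(x_0,\theta_0)}^{\tau_+(x_0,\theta_0)}(\sigma_1-\sigma_2)(\gamma(s))\,\chi(\dot\gamma(s),\dot\gamma(s))\,\dd s=0
\end{equation*}
for every maximal geodesic $\gamma$. As $\chi(\theta,\theta)\neq0$ is continuous on the connected fibres $S_x\M$ (hence of constant sign) and $e^{-\int_\gamma a}>0$, this asserts that the weighted geodesic ray transform of $\sigma_1-\sigma_2$, with the nonvanishing weight $\chi(\theta,\theta)$, vanishes on all geodesics; invertibility of this weighted (attenuated) ray transform on the simple manifold $(\M,\g)$ under $k^+(\M,\g)<1$ gives $\sigma_1=\sigma_2$. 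In the isotropic case Theorem~\ref{th1} already yields $a_1=a_2$, so that $k_1=\sigma_1\chi=\sigma_2\chi=k_2$ and therefore $(a_1,k_1)=(a_2,k_2)$.

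The main obstacle is the rigorous isolation of the single-scattering contribution: one must construct the concentrating geometric optics solutions so that the ballistic part reproduces the attenuated concentration on $\gamma$, while the full multiple-scattering series, pushed into the source, is controlled uniformly in $\varepsilon$ through the estimate of Lemma~\ref{lemma2}, so that only the diagonal term $(\sigma_1-\sigma_2)\,\chi(\cdot,\cdot)$ survives in the limit. The secondary difficulty is the injectivity of the resulting weighted ray transform, and it is here that the two standing hypotheses do the work: $\chi(\theta,\theta)\neq0$ guarantees a nonvanishing admissible weight, and $k^+(\M,\g)<1$ is precisely the curvature bound that makes the inversion available.
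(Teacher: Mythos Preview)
Your outline is essentially the paper's proof: gauge-normalize via Corollary~\ref{corol0}, obtain the integral identity $\int_{Q_T} u_2\,\I_{k'}[u_1]=0$ from (\ref{equation integr1}) with $a'=0$, then choose concentrating geometric optics inputs to isolate the diagonal $k(\gamma,\dot\gamma,\dot\gamma)$ and finish with ray-transform injectivity. Two points of divergence are worth noting.

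First, the paper does not build $\varepsilon$-beams; it keeps the oscillatory ansatz of Section~2 with amplitudes $\phi_j(x,\theta)=\rho_j(x)\Psi_h(\xi_x,\theta)$, where $\Psi_h$ is the Poisson kernel on the sphere, and takes the ordered limits $h,h'\to 1$ (which collapses the $\theta$- and $\theta'$-integrals onto the parallel-transported $\xi$ via (\ref{prop4}), Lemma~\ref{lem4.1}) and $\lambda\to\infty$ (which kills all the remainder terms $J^1,J^2,J^3$ by Lemmas~\ref{lem4.2}--\ref{limJ lambda=0}). This is precisely the ``rigorous isolation of the single-scattering contribution'' you flag as the main obstacle, and the paper's mechanism is concrete where yours is only sketched.

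Second, in the endgame the paper does not invoke a weighted ray transform. After reaching the bilinear identity (\ref{4.9}) it concentrates $\rho_1$ at a point $x_0\in\widetilde{\M}\setminus\M$ and then lets $\rho_2$ run through a sequence approximating the sign of $k$ along $\gamma_{x_0,\xi}$, obtaining $\int_0^T|k(\gamma_{x_0,\xi}(t),\dot\gamma_{x_0,\xi}(t),\dot\gamma_{x_0,\xi}(t))|\,dt=0$; the lower bound $|\chi(\theta,\theta)|\geq m_\chi>0$ then gives $\int|\sigma(\gamma_{x_0,\xi})|=0$, hence $\mathcal{X}(\sigma)=0$, and one concludes with the \emph{unweighted} estimate (\ref{ray inverse}). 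Your route via the transform weighted by $\chi(\dot\gamma,\dot\gamma)$ is reasonable, but (\ref{ray inverse}) as stated does not cover it; you would need either to cite a weighted injectivity result valid under $k^+(\M,\g)<1$, or simply adopt the paper's sign-approximation device and land on the unweighted transform.
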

 The main results of this section are obviously available for simple compact Riemannian  manifolds provided with metrics close to the identity or with negatively curved metrics. The condition $k^+(\M,\g)<1$ prohibits to the metric having conjugate points and it will be necessary to invert the geodesic ray transform.\\
 This paper is organized as follows: In section $2$ we construct special solutions to (\ref{IBVP}) called geometric optics solutions. In section $3$, we use these solutions to prove the Theorem \ref{th0}, Corollary \ref{corol0} and Theorem \ref{th1}. The section $4$ is devoted to the proof of Theorem \ref{th2}.
\section{Construction of geometric optics solutions}
\setcounter{equation}{0}
In this section, we use the WKB expansion method to built a family of solutions for the linear Boltzmann equation which will be used to prove the main results of this paper. We aim to construct geometric optics solutions $u_j,\;j=1,2$, for the two linear Boltzmann equations:
\begin{equation*}\label{eq1}
  \partial_t u_1(t,x,\theta)+\D u_1(t,x,\theta)+a(x,\theta)u_1(t,x,\theta)=\I_k[u_1](t,x,\theta)\;\; \textrm{in }\; Q_T,
\end{equation*}
and
\begin{equation*}\label{eq2}
  \partial_t u_2(t,x,\theta)+\D u_2(t,x,\theta)-a(x,\theta)u_2(t,x,\theta)=-\I^*_k[u_2](t,x,\theta)\;\; \textrm{in }\; Q_T,
\end{equation*}
of the form
\begin{equation*}\label{u1}
  u_1(t,x,\theta)=(\alpha_1 \beta_a)(t,x,\theta) e^{i\lambda(t+\tau_-(x,\theta))}+r_{1,\lambda} (t,x,\theta)
  \end{equation*}
and \begin{equation*}\label{u2}
  u_2(t,x,\theta)=(\alpha_2 \beta_{-a})(t,x,\theta) e^{-i\lambda(t+\tau_-(x,\theta))}+r_{2,\lambda} (t,x,\theta),
\end{equation*}
where $\lambda > 0$ is an arbitrary large parameter,  $\norm{r_{j,\lambda}}\rightarrow 0$ as $\lambda \rightarrow \infty$ for a suitable norm and
\begin{equation*}\label{I*k}
\I^*_k[u](t,x,\theta)=\displaystyle \int_{S_x\M} k(x,\theta,\theta')u(t,x,\theta) \dd\omega_x( \theta).
\end{equation*}
Let us begin the construction of such solutions. Fix $\epsilon > 0$ small enough to verify
\begin{equation}\label{condition T}
T > Diam_{\g}(\M)+2 \epsilon, \end{equation} and such that we can extend $\M$ to a simple compact Riemannian manifold $\widetilde{\M}\Supset \M$ satisfying the condition
\begin{equation}\label{M tilde}
  \forall x \in \widetilde{\M}\setminus \M,\;\;dist_{\g}(x,\M) < \epsilon.
  \end{equation} We also extend the metric $\g$ by the identity outside $\M$ and we keep the same notation $\g$ for this extension.  We take $a\in \mathscr{A}_0(C_0)$ that we extend to a $C^1$ function on $\R^n \times \mathbb{S}^{n-1}$ vanishing outside $\M$; that is $a(x,\theta)=0$ for all $x\in \R^n \setminus \M,\; \theta \in \mathbb{S}^{n-1}$.
Let \begin{equation}\label{phi} \phi\in C_0^{\infty}(\widetilde{\M}\setminus \M,C(\mathbb{S}^{n-1}))
\end{equation} that we also extend to $\R^n$ by zero: we mean that $\phi(x,\theta)=0,$ for $x\in \R^n \setminus \widetilde{\M}$ and $\theta \in \mathbb{S}^{n-1}$ and for $(x,\theta)\in S\M$. Henceforth, we keep the same notation for a function defined on $\M$ and its extension to $\R^n$.
 \begin{lemma}\label{lemma apha}
Let $\phi$ satisfying (\ref{phi}) and consider
\begin{equation*}\label{alpha}
\alpha(t,x,\theta)=\phi(\gamma_{x,\theta}(-t),\dot{\gamma}_
{x,\theta}(-t)),\;\; \forall(t,x,\theta)\in \R \times S\M.
\end{equation*}
Then $\alpha$ satisfies the transport equation \begin{equation*}\label{eq alpha}
\p _t\alpha(t,x,\theta)+\D \alpha(t,x,\theta)=0, \;\; \forall(t,x,\theta)\in \R \times S\M,
\end{equation*}
and the conditions
\begin{equation*}\label{condition alpha}
 \alpha(0,x,\theta)=0 \mbox{ and } \alpha(\pm T,x,\theta)=0,\;\; \forall(x,\theta)\in  S\M.
 \end{equation*}
\end{lemma}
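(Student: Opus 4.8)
The plan is to recognize $\alpha$ as the push-forward of the amplitude $\phi$ along the geodesic flow and to read off all three assertions from the group law of the flow together with the support properties of $\phi$. First I would rewrite the definition using the flow $\phi_s$ of (\ref{flowphit}): for every $(t,x,\theta)$ one has $\alpha(t,x,\theta)=\phi(\phi_{-t}(x,\theta))$, since $\phi_{-t}(x,\theta)=(\gamma_{x,\theta}(-t),\dot{\gamma}_{x,\theta}(-t))$.

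For the transport equation, fix $(x,\theta)$ and set $g(r)=\phi(\phi_r(x,\theta))$, so that $\alpha(t,x,\theta)=g(-t)$ and $\p_t\alpha(t,x,\theta)=-g'(-t)$. Applying the definition (\ref{D}) of $\D$ in the variables $(x,\theta)$ with $t$ frozen, and using the group law $\phi_{-t}\circ\phi_s=\phi_{s-t}$,
\begin{equation*}
\D\alpha(t,x,\theta)=\p_s\big(\phi(\phi_{-t}(\phi_s(x,\theta)))\big)_{|s=0}=\p_s\big(g(s-t)\big)_{|s=0}=g'(-t),
\end{equation*}
whence $\p_t\alpha+\D\alpha=0$. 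Equivalently, along each characteristic $s\mapsto(s,\phi_s(x,\theta))$ the function $\alpha$ is constant (equal to $\phi(x,\theta)$); writing the identity this way makes the cancellation transparent and circumvents the fact that $\phi$ is merely continuous, not differentiable, in the direction variable, since the two terms differentiate $\phi$ along the very same flow line and hence cancel.

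The initial condition is immediate: $\alpha(0,x,\theta)=\phi(\gamma_{x,\theta}(0),\dot{\gamma}_{x,\theta}(0))=\phi(x,\theta)$, and by (\ref{phi}) the amplitude is supported in $\widetilde{\M}\setminus\M$ and was extended by zero on $S\M$, so $\phi(x,\theta)=0$ for all $(x,\theta)\in S\M$.

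The main point, where I expect the only genuine work, is the vanishing at $t=\pm T$. Since $\alpha(\pm T,x,\theta)=\phi(\gamma_{x,\theta}(\mp T),\dot{\gamma}_{x,\theta}(\mp T))$, it suffices to show that for $(x,\theta)\in S\M$ the point $\gamma_{x,\theta}(\mp T)$ lies outside $\mathrm{supp}\,\phi$. Because $\widetilde{\M}$ is simple its geodesics are minimizing, so the time $\widetilde{\tau}_+(x,\theta)$ at which $\gamma_{x,\theta}$ (started at $x\in\M\subset\widetilde{\M}$) exits $\widetilde{\M}$ equals the distance from $x$ to the exit point and is bounded by $\textrm{Diam}_\g(\widetilde{\M})$; likewise for the backward exit time. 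By (\ref{M tilde}) every point of $\widetilde{\M}$ lies within distance $\epsilon$ of $\M$, so the triangle inequality gives $\textrm{Diam}_\g(\widetilde{\M})\le\textrm{Diam}_\g(\M)+2\epsilon<T$ by (\ref{condition T}). Hence for $|s|\ge T$ the geodesic has already left $\widetilde{\M}$, and since $\p\widetilde{\M}$ is strictly convex a unit-speed geodesic never re-enters once it exits; as $\phi$ vanishes outside $\widetilde{\M}$ (indeed $\mathrm{supp}\,\phi\subset\widetilde{\M}\setminus\M$), we conclude $\phi(\gamma_{x,\theta}(\mp T),\cdot)=0$, i.e. $\alpha(\pm T,x,\theta)=0$. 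The only care needed is to verify that the exit-time bound is uniform over $(x,\theta)\in S\M$ and to handle the passage from the interior metric to the Euclidean extension, which is what guarantees the no-return property for large $|s|$.
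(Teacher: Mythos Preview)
Your argument is correct and follows essentially the same route as the paper: both reduce the transport identity to the group law $\phi_{-t}\circ\phi_s=\phi_{s-t}$ of the geodesic flow, and both deduce the vanishing at $t=0,\pm T$ from the support condition (\ref{phi}) combined with (\ref{condition T}). The only difference is cosmetic: you package the computation via the auxiliary function $g(r)=\phi(\phi_r(x,\theta))$, and you spell out the $t=\pm T$ step with the diameter bound $\textrm{Diam}_\g(\widetilde{\M})\le\textrm{Diam}_\g(\M)+2\epsilon<T$, whereas the paper simply invokes (\ref{phi}) and (\ref{condition T}) in one line.
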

\begin{proof}
 Let $(t,x,\theta)\in \R \times S\M$. We have
 \begin{align*}
\D  \alpha(t,x,\theta)&=\D \big( \phi(\gamma_{x,\theta}(-t),\dot{\gamma}_
{x,\theta}(-t))\big)\cr
&=\quad \p_s \big[\phi (\gamma_{\gamma_{x,\theta}(s),\dot{\gamma}_
{x,\theta}(s)}(-t),\dot{\gamma}_
{\gamma_{x,\theta}(s),\dot{\gamma}_
{x,\theta}(s)}(-t))\big]_{|s=0}.
\end{align*} Since  $\gamma_{\gamma_{x,\theta}(s),\dot{\gamma}_
{x,\theta}(s)}(-t)=\gamma_{x,\theta}(s-t)$ and $\dot{\gamma}_
{\gamma_{x,\theta}(s),\dot{\gamma}_
{x,\theta}(s)}(-t)=\dot{\gamma}_
{x,\theta}(s-t))$, we obtain
 \begin{align*}\label{D alpha}
\D \alpha(t,x,\theta)&=\quad \p_s \big[\phi(\gamma_{x,\theta}(s-t),\dot{\gamma}_
{x,\theta}(s-t))\big]_{|s=0}\cr
&=\quad \p_s \big[\alpha(t-s,x,\theta)\big]_{|s=0}
\cr
&=\quad -\p_t \alpha(t,x,\theta).
\end{align*}
With (\ref{phi}) and (\ref{condition T}), we get well
$\alpha(0,x,\theta)=\phi(x,\theta)=0$ and  $\alpha(\pm T,x,\theta)=\phi(\gamma_{x,\theta}(\pm T)$, $\dot{\gamma}_
{x,\theta}(\pm T))=0$, for all $(x,\theta) \in S\M$.
\end{proof}
\begin{lemma}\label{lemma beta}
We define
\begin{equation}\label{beta}
\beta_a(t,x,\theta)=\exp\bigg(-\displaystyle \int_{0}^{t}a\big(\gamma_{x,\theta}(-s),\dot{\gamma}_{x,\theta}(-s)\big) \;ds \bigg),\;\;(t,x,\theta)\in\R \times S\M.
\end{equation}
Then \begin{equation}\label{eq beta}
\p _t\beta_a(t,x,\theta)+\D \beta_a(t,x,\theta) +a(x, \theta) \beta_a (t,x,\theta)=0, \;\;(t,x,\theta)\in\R \times S\M.
\end{equation}
\end{lemma}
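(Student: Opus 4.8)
The plan is to verify the transport equation (\ref{eq beta}) by differentiating the explicit formula (\ref{beta}) directly, once in $t$ and once along the geodesic flow, and then checking that the three resulting terms cancel. The computation splits cleanly into the time derivative and the flow derivative.

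First I would compute $\p_t\beta_a$. Writing $\beta_a=\exp(-\Phi)$ with $\Phi(t,x,\theta)=\int_0^t a(\gamma_{x,\theta}(-s),\dot{\gamma}_{x,\theta}(-s))\,ds$, the fundamental theorem of calculus gives $\p_t\Phi(t,x,\theta)=a(\gamma_{x,\theta}(-t),\dot{\gamma}_{x,\theta}(-t))$, hence $\p_t\beta_a(t,x,\theta)=-a(\gamma_{x,\theta}(-t),\dot{\gamma}_{x,\theta}(-t))\,\beta_a(t,x,\theta)$.

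Next I compute $\D\beta_a$ from the definition (\ref{D}), that is $\D\beta_a(t,x,\theta)=\p_s\big[\beta_a(t,\phi_s(x,\theta))\big]_{|s=0}$. The key algebraic input is the same flow identity already used in Lemma \ref{lemma apha}, namely $\gamma_{\gamma_{x,\theta}(s),\dot{\gamma}_{x,\theta}(s)}(-\sigma)=\gamma_{x,\theta}(s-\sigma)$ (and likewise for the velocity), which is the group law $\phi_{-\sigma}\circ\phi_s=\phi_{s-\sigma}$. It recasts the integral defining $\beta_a(t,\phi_s(x,\theta))$ as $\int_0^t a(\gamma_{x,\theta}(s-\sigma),\dot{\gamma}_{x,\theta}(s-\sigma))\,d\sigma$. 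After the change of variable $\rho=\sigma-s$ this becomes $\int_{-s}^{t-s}a(\gamma_{x,\theta}(-\rho),\dot{\gamma}_{x,\theta}(-\rho))\,d\rho$, and differentiating in $s$ at $s=0$ through the two moving endpoints yields $\D\beta_a(t,x,\theta)=\big(a(\gamma_{x,\theta}(-t),\dot{\gamma}_{x,\theta}(-t))-a(x,\theta)\big)\beta_a(t,x,\theta)$, where I use $\gamma_{x,\theta}(0)=x$ and $\dot{\gamma}_{x,\theta}(0)=\theta$.

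Finally I add the three contributions. The two occurrences of $a(\gamma_{x,\theta}(-t),\dot{\gamma}_{x,\theta}(-t))\beta_a$ cancel between $\p_t\beta_a$ and $\D\beta_a$, while the term $-a(x,\theta)\beta_a$ produced by $\D\beta_a$ cancels the prescribed $+a(x,\theta)\beta_a$, leaving $0$ and establishing (\ref{eq beta}). The computation is essentially routine; the only delicate point is the flow-and-change-of-variable bookkeeping in $\D\beta_a$, that is correctly invoking the group law $\phi_{-\sigma}\circ\phi_s=\phi_{s-\sigma}$ and tracking the signs coming from the two variable endpoints $-s$ and $t-s$, which is where a sign error would most easily creep in.
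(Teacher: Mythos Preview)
Your proof is correct and follows essentially the same approach as the paper: both compute $\p_t\beta_a$ by the fundamental theorem of calculus, then compute $\D\beta_a$ via the group law for the geodesic flow followed by a change of variable that turns the integral into one with $s$-dependent endpoints, and differentiate through those endpoints to obtain $\big(a(\gamma_{x,\theta}(-t),\dot{\gamma}_{x,\theta}(-t))-a(x,\theta)\big)\beta_a$. The only differences are notational (your substitution $\rho=\sigma-s$ yields limits $-s$ and $t-s$, whereas the paper's substitution yields limits $r$ and $r-t$), and the cancellation argument is identical.
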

\begin{proof}
We have $\p_t\beta_a(t,x,\theta)=-a(\gamma_{x,\theta}(-t),\dot{\gamma}_{x,\theta}
(-t))\beta_a(t,x,\theta)$  and
 \begin{align*}\label{D beta}
\D \beta_a(t,x,\theta)&=\p_r \bigg[\exp\bigg(-\displaystyle \int_{0}^{t}a\big(\gamma_{\gamma_{x,\theta}(r),\dot{\gamma}_
{x,\theta}(r)}(-s),\dot{\gamma}_{\gamma_{x,\theta}(r),\dot{\gamma}_
{x,\theta}(r)}(-s)\big) ds\bigg)\bigg]_{|r=0} \cr
&=\quad \p_r \bigg[\exp\bigg(-\displaystyle \int_{0}^{t}a\big(\gamma_{x,\theta}(r-s),\dot{\gamma}_{x,\theta}
(r-s)\big) \;ds \bigg)\bigg]_{|r=0}\cr
&=\quad \p_r \bigg[ \exp\bigg(\displaystyle \int_{r}^{r-t}a\big(\gamma_{x,\theta}(s),\dot{\gamma}_{x,\theta}(s)\big) \;ds \bigg)\bigg]_{|r=0}\cr
&=\quad  \big[a\big(\gamma_{x,\theta}(-t),\dot{\gamma}_{x,\theta}(-t)\big)-
a(x,\theta) \big] \exp\bigg(\displaystyle \int_{0}^{-t}a\big(\gamma_{x,\theta}(s),\dot{\gamma}_{x,\theta}(s) \big) \;ds \bigg)\cr
&=\quad  \big[a\big(\gamma_{x,\theta}(-t),\dot{\gamma}_{x,\theta}(-t)\big)-
a(x,\theta) \big]\beta_a(t,x,\theta).
\end{align*}
Thus, the transport equation (\ref{eq beta}) is well proved.
\end{proof}
Now, for $\lambda > 0$, consider the function
\begin{equation}\label{psi lambda}
 \psi_{ \lambda}(t,x,\theta)=e^{ i \lambda(t+\tau_-(x,\theta))},\;\;(t,x,\theta)\in \R \times S\M.
 \end{equation}
 We compute seemingly and we use the property $\tau_-(\gamma_{x,\theta}(s),\dot{\gamma}_
{x,\theta}(s))=\tau_-(x,\theta)-s$, then we get
\begin{equation*}\label{ D psi lambda}
\D  \psi_{ \lambda}(x,\theta) =- i \lambda  \psi_{ \lambda}(x,\theta).
\end{equation*} Combining this with the two previous lemmas, we infer that \begin{equation*}\label{alpha beta psi}
    \big( \p_t (\alpha \beta_a \psi_\lambda)+\D (\alpha \beta_a \psi_\lambda)+a(x,\theta) (\alpha \beta_a \psi_\lambda)\big)(t,x,\theta)=0,\;\; (t,x,\theta)\in \R \times S\M.
    \end{equation*}
    This leads to define the remainders $r_{j,\lambda},\;j=1,2$, in the expressions of $u_1$ and $u_2$, as the respective unique solutions of the initial and final boundary value problems  defined as follows:
     \begin{equation}\label{r1 lamda}
\left\{
\begin{array}{lll}
\bigg(\partial_t r_{1,\lambda}+\D r_{1,\lambda}+a(x,\theta)r_{1,\lambda}\bigg)(t,x,\theta)=\bigg(
\I_k[r_{1,\lambda}]+\I_k[\alpha_1 \beta_a \psi_{\lambda}]\bigg)(t,x,\theta)  & \textrm{in }\; Q_T,\cr
r_{1,\lambda}(0, \cdot,\cdot )=0 & \textrm{in }\; S\M ,\cr
r_{1,\lambda}=0 & \textrm{on } \; \Sigma_-,
\end{array}
\right.
\end{equation}
and \begin{equation}\label{r2 lamda}
\left\{
\begin{array}{lll}
\bigg(\partial_t r_{2,\lambda}+\D r_{2,\lambda}-a(x,\theta)r_{2,\lambda}\bigg)(t,x,\theta)=\bigg(
\I^*_k[r_{2,\lambda}]+\I^*_k[\alpha_2 \beta_{-a} \psi_{-\lambda}]\bigg)(t,x,\theta)  & \textrm{in }\; Q_T,\cr
r_{2,\lambda}(T, \cdot,\cdot )=0 & \textrm{in }\; S\M ,\cr
r_{2,\lambda}=0 & \textrm{on } \; \Sigma_+.
\end{array}
\right.
\end{equation}
For $j=1,2$, we let $\phi_j\in C_0^{\infty}(\widetilde{\M}\setminus \M,C(\mathbb{S}^{n-1}))$ and we define
\begin{equation}\label{phi1 phi2}
 \alpha_j(t,x,\theta)=\phi_j(\gamma_{x,\theta}(-t),\dot{\gamma}_
{x,\theta}(-t)). \end{equation}
From Lemma \ref{lemma2}, there exist a constant $C$ such that \begin{equation}\label{estimate r1 Ik}
  \norm{r_{1,\lambda}(t,\cdot,\cdot)}_{L^p(S\M)}\leq  C \norm{\I_k[\alpha_1 \beta_a \psi_{\lambda}]}_{L^p(Q_T)},\;\forall t\in [0,T].
\end{equation}
and \begin{equation}\label{estimate r2 Ik*}
  \norm{r_{2,\lambda}(t,\cdot,\cdot)}_{L^p(S\M)}\leq  C \norm{\I^*_k[\alpha_2 \beta_{-a}\psi_{-\lambda}]}_{L^p(Q_T)},\;\forall t\in [0,T].
\end{equation}
We need the following result.
\begin{lemma}\label{lemma Ik Ik*}
   Let $a\in  \mathscr{A}_0(C_0)$ and $k\in  \mathscr{K}(C_1,C_2)$. There exist a constant $C>0$ depending only on $T,\M,C_0,C_1$ and $C_2$, such that for any  $p \geq 1$, and any $t\in [0,T]$, we have
   \begin{equation*}\label{estimate Ik phi}
   \norm{\I_k[\alpha_1 \beta_a\psi_{\lambda}](t,\cdot,\cdot)}_{L^p(S\M)}\leq C \norm{\phi_1}_{L^p(S\widetilde{\M})},\;  \norm{\I^*_k[\alpha_2 \beta_{-a} \psi_{-\lambda}](t,\cdot,\cdot)}_{L^p(S\M)}\leq C \norm{\phi_2}_{L^p(S\widetilde{\M})}.
   \end{equation*}
   Moreover, we have  \begin{equation}\label{lim Ik Ik*}
  \displaystyle \lim_{\lambda \rightarrow \infty} \norm{\I_k[\alpha_1 \beta_a\psi_{\lambda}]}_{L^2(Q_T)}= \displaystyle \lim_{\lambda \rightarrow \infty}  \norm{\I^*_k[\alpha_2 \beta_{-a}\psi_{-\lambda}]}_{L^2(Q_T)}=0.
   \end{equation}
\end{lemma}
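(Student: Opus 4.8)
The plan is to prove the two assertions separately, and throughout to exploit that $\abs{\psi_{\pm\lambda}}=1$ and that $\beta_{\pm a}$ is bounded: for $t\in[0,T]$ and $\norm{a}_{L^\infty}\le C_0$ the defining formula (\ref{beta}) gives $\abs{\beta_{\pm a}(t,x,\theta)}\le e^{C_0T}$ at once. For the uniform $L^p$-bound I would first strip off the oscillation and the $\beta_a$-factor. Writing $w_\lambda=\alpha_1\beta_a\psi_\lambda$, the pointwise bound $\abs{w_\lambda}\le e^{C_0T}\abs{\alpha_1}$ together with Jensen's inequality applied to the measure $k(x,\theta,\theta')\dd\omega_x(\theta')$, of total mass $\le C_1$, yields
\[
\abs{\I_k[w_\lambda](t,x,\theta)}^p\le e^{C_0Tp}\,C_1^{p-1}\int_{S_x\M}k(x,\theta,\theta')\,\abs{\alpha_1(t,x,\theta')}^p\dd\omega_x(\theta').
\]
Integrating in $(x,\theta)$ against $\dvv=\dd\omega_x(\theta)\wedge\dv$, applying Fubini and the second bound $\int_{S_x\M}k(x,\theta,\theta')\dd\omega_x(\theta)\le C_2$, I obtain $\norm{\I_k[w_\lambda](t,\cdot,\cdot)}_{L^p(S\M)}\le e^{C_0T}C_1^{(p-1)/p}C_2^{1/p}\norm{\alpha_1(t,\cdot,\cdot)}_{L^p(S\M)}$. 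Since $\alpha_1(t,\cdot,\cdot)=\phi_1\circ\phi_{-t}$ and the geodesic flow preserves $\dvv$ (Liouville, as recalled in the text), a change of variables gives $\norm{\alpha_1(t,\cdot,\cdot)}_{L^p(S\M)}\le\norm{\phi_1}_{L^p(S\widetilde{\M})}$. As $C_1^{(p-1)/p}C_2^{1/p}\le\max(1,C_1)\max(1,C_2)$ uniformly in $p$, this produces the claimed constant depending only on $T,\M,C_0,C_1,C_2$; the estimate for $\I^*_k$ is identical, its kernel obeying the same two Schur bounds and $\beta_{-a},\psi_{-\lambda}$ the same pointwise bounds.

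For the limit (\ref{lim Ik Ik*}) I would argue by dominated convergence. Taking $p=2$ in the computation above, but without discarding the phase, shows that
\[
H(t,x,\theta):=e^{C_0T}\int_{S_x\M}k(x,\theta,\theta')\,\abs{\alpha_1(t,x,\theta')}\dd\omega_x(\theta')
\]
belongs to $L^2(Q_T)$, is independent of $\lambda$, and dominates $\abs{\I_k[w_\lambda]}$ pointwise. Hence it suffices to prove that for a.e.\ $(t,x,\theta)$ the inner integral $\int_{S_x\M}k(x,\theta,\theta')\,\alpha_1\beta_a\,e^{i\lambda\tau_-(x,\theta')}\dd\omega_x(\theta')\to 0$ as $\lambda\to\infty$, where the factor $e^{i\lambda t}$ has been pulled out of the $\theta'$-integral and contributes only a modulus-one constant.

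This final convergence is a Riemann–Lebesgue statement, and I expect it to be the main obstacle. For fixed $(t,x,\theta)$ the amplitude $\theta'\mapsto k(x,\theta,\theta')\alpha_1\beta_a$ lies in $L^1(S_x\M)$, so the integral is the Fourier transform, evaluated at $-\lambda$, of the pushforward of this $L^1$-density under the smooth map $\theta'\mapsto\tau_-(x,\theta')$; it vanishes as $\lambda\to\infty$ exactly when that pushforward is absolutely continuous with respect to Lebesgue measure on $\R$. One must therefore rule out that $\tau_-(x,\cdot)$ is constant on a set of positive $\omega_x$-measure, equivalently that its critical set $\{\Dd_{\theta'}\tau_-(x,\cdot)=0\}$ is $\omega_x$-null for a.e.\ $x\in\M$. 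The center of a flat disc, where $\tau_-(x,\cdot)$ is constant, shows that exceptional points are unavoidable, but such $x$ form a null set and do not affect the integral over $Q_T$. For a simple manifold an interior point is joined to every boundary point by a unique transversal geodesic, so $\tau_-(x,\cdot)$ is smooth with null critical set off a negligible set of $x$; granting this, Sard's theorem and the coarea formula give the required absolute continuity, the inner integral vanishes in the limit for a.e.\ $(t,x,\theta)$, and dominated convergence yields $\norm{\I_k[\alpha_1\beta_a\psi_\lambda]}_{L^2(Q_T)}\to 0$. The same reasoning, with the phase $\tau_-$ now carried by the integration variable and the amplitude built from $\phi_2$, disposes of $\I^*_k[\alpha_2\beta_{-a}\psi_{-\lambda}]$.
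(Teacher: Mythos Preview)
Your $L^p$ bound coincides with the paper's: strip the unimodular phase and $|\beta_a|\le e^{C_0T}$, apply H\"older (your Jensen is the same inequality) against the measure $k(x,\theta,\cdot)\,\dd\omega_x$ of mass $\le C_1$, integrate using the dual bound $\int k\,\dd\omega_x(\theta)\le C_2$, and invoke Liouville to pass from $\norm{\alpha_1(t,\cdot)}_{L^p(S\M)}$ to $\norm{\phi_1}_{L^p(S\widetilde\M)}$.

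For (\ref{lim Ik Ik*}) you take a different route from the paper, and there is a gap at the clause ``granting this''. The paper pulls out $e^{i\lambda t}$, treats the remaining $\theta'$-integral as a compact operator applied to $e^{i\lambda\tau_-}$, and uses that $e^{i\lambda\tau_-}\rightharpoonup 0$ weakly in $L^2(S\M)$; this weak convergence (Lemma~\ref{lemA2}) is obtained from Santal\'o's formula, which rewrites $\int_{S\M}f\,e^{i\lambda\tau_-}$ as an integral over $\p_-S\M$ of one-dimensional oscillatory integrals $\int_0^{\tau_+}f(\phi_s)\,e^{i\lambda s}\,ds$, so that only the classical Riemann--Lebesgue lemma on $\R$ is needed. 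You instead seek pointwise decay of the fibre integral $\int_{S_x\M}g(\theta')e^{i\lambda\tau_-(x,\theta')}\dd\omega_x(\theta')$ and then dominated convergence on $Q_T$. That requires the critical set of $\tau_-(x,\cdot)$ to be $\omega_x$-null for a.e.\ $x$, which you assert but do not prove. It is true --- $\theta$ is critical for $\tau_-(x,\cdot)$ iff the backward geodesic from $(x,\theta)$ meets $\p\M$ orthogonally, so the full critical set in $S\M$ is the geodesic flow-out of the $(n{-}1)$-dimensional set $\{(y,-\nu(y)):y\in\p\M\}$, hence has dimension $n<2n{-}1$ and measure zero, and Fubini gives the a.e.\ fibre statement --- but this sentence is exactly what is missing from your argument. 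The paper's Santal\'o device sidesteps the whole difficulty because $\tau_-$ is affine along each orbit, so no phase analysis on spheres is required.
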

\begin{proof}
Since $a\in  \mathscr{A}_0(C_0)$, we have $$
\abs{\I_k[\alpha_1 \beta_a\psi_{\lambda}](t,\cdot,\cdot)}\leq e^{C_0T} \int_{S_x\M}\abs{k(x,\theta,\theta')\alpha_1(t,x,\theta')} \dd\omega_x( \theta').$$ Let $p'$ be the integer satisfying $\frac{1}{p}+\frac{1}{p'}=1$. Applying the H\"older's inequality, we get
 \begin{align*}
\abs{\I_k[\alpha_1 \beta_a\psi_{\lambda}](t,\cdot,\cdot)}&\leq e^{C_0T} \big( \int_{S_x\M}\abs{k(x,\theta,\theta')}\dd\omega_x( \theta')\big)^{\frac{1}{p'}} \big( \int_{S_x\M}\abs{k(x,\theta,\theta')  \alpha_1^p(t,x,\theta')}  \dd\omega_x( \theta')\big)^{\frac{1}{p}}\cr
&\leq \quad  e^{C_0T}C_1^{\frac{1}{p'}} \big( \int_{S_x\M}\abs{k(x,\theta,\theta')}\abs{ \alpha_1^p (t,x,\theta')}\dd\omega_x( \theta')\big)^{\frac{1}{p}}.
\end{align*}
So $\norm{\I_k[\alpha_1\beta_a\psi_{\lambda}](t,\cdot,\cdot)}_{L^p(S\M)}
\leq e^{C_0T}C_1^{\frac{1}{p'}}C_2^{\frac{1}{p}}\norm{\alpha_1(t,\cdot,\cdot)
}_{L^p(S\M)}.$ Since $\alpha_1$ is defined by (\ref{phi1 phi2}), we obtain  \\ $\norm{\I_k[\alpha_1 \beta_a\psi_{\lambda}](t,\cdot,\cdot)}_{L^p(S\M)}\leq C \norm{\phi_1}_{L^p(S\widetilde{\M})}.$
On the same way, we prove the analogous estimate for $\I_k^*$.\\
To prove (\ref{lim Ik Ik*}), we put $\tilde{k}_{t,x}(\theta,\theta')=k(x,\theta,\theta') (\alpha_1 \beta_a)(t,x,\theta')$, then
\begin{align*}e^{-i\lambda t}\I_k[\alpha_1 \beta_a\psi_{\lambda}](t,x,\theta)=&\int_{S_x\M}k(x,\theta,\theta') (\alpha_1 \beta_a)(t,x,\theta') e^{i\lambda \tau_-(x,\theta')}\dd\omega_x( \theta')\cr =&
\int_{S_x\M}\tilde{k}_{t,x}(\theta,\theta') e^{i\lambda \tau_-(x,\theta')}\dd\omega_x( \theta').
\end{align*}
Since $k(x,\cdot,\cdot) \in L^2(S_x\M,S_x\M)$, $\int_{S_x\M}k(x,\theta,\theta')\dd\omega_x( \theta')\leq C_1$ and $\int_{S_x\M}k(x,\theta,\theta')\dd\omega_x( \theta)\leq C_2$  then for any $x\in \M$, the integral operator defined by
\begin{equation*}
 \begin{array}{lll}
K_x:  & L^2(S_x\M)& \longrightarrow L^2(S_x\M),\cr
 & f_x & \longmapsto K_x(f_x)(\theta)= \int_{S_x\M}\tilde{k}_{t,x}(\theta,\theta')f_x(\theta')
\dd\omega_x( \theta')
\end{array}
 \end{equation*} is compact in $L^2(S_x\M)$. This implies that the operator
 \begin{equation*}
\begin{array}{lll}
K:  & L^2(S\M)& \longrightarrow L^2(S\M),\cr
& f& \longmapsto K(f)(x,\theta)= K_x(f_x)(\theta)= \int_{S_x\M}\tilde{k}_{t,x}(\theta,\theta')f(x,\theta')\dd\omega_x( \theta')
\end{array}
\end{equation*}  is compact in $L^2(S\M)$. So the fact that $(x,\theta) \mapsto e^{i\lambda \tau_-(x,\theta)}$ converges weakly towards zero in the Hilbert space $L^2(S\M)$ ( see Lemma  \ref{lemA2}) leads to
$$\norm{e^{-i\lambda t}\I_k[\alpha_1 \beta_a\psi_{\lambda}](t,\cdot,\cdot)}_{L^2(S\M)} \longrightarrow 0,\; \mbox{ as } \lambda \longrightarrow \infty.$$ Hence $\norm{e^{-i\lambda t}\I_k[\alpha_1 \beta_a\psi_{\lambda}](t,\cdot,\cdot)}_{L^2(S\M)} $ is bounded independently of $\lambda$ and in light of the dominated convergence Theorem, we get $$\norm{\I_k[\alpha_1 \beta_a\psi_{\lambda}]}_{L^2(Q_T)}=\displaystyle \lim_{\lambda \rightarrow \infty}\norm{e^{-i\lambda t}\I_k[\alpha_1 \beta_a\psi_{\lambda}]}_{L^2(Q_T)}=0.$$
Seemingly, we prove that $\displaystyle \lim_{\lambda \rightarrow \infty}  \norm{\I^*_k[\alpha_2 \beta_{-a}\psi_{-\lambda}]}_{L^2(Q_T)}=0.$
\end{proof}
Combining Lemma \ref{lemma Ik Ik*} with (\ref{estimate r1 Ik}) and (\ref{estimate r2 Ik*}), we end up with the following result.
\begin{lemma}\label{uj estimate rj phij}
  Let $p \geq 1,\,a\in  \mathscr{A}_0(C_0),\,k\in  \mathscr{K}(C_1,C_2)$. For any $\lambda > 0$ and any $\phi_j\in C_0^{\infty}(\widetilde{\M}\setminus \M,C(\mathbb{S}^{n-1})),\; j=1,2$, each of the systems
    \begin{equation}\label{cauchy u1}
\left\{
\begin{array}{lll}
\partial_t u_1(t,x,\theta)+\D u_1(t,x,\theta)+a(x,\theta)u_1(t,x,\theta)=
\I_k[u_1](t,x,\theta)  & \textrm{in }\; Q_T,\cr
u_1(0, \cdot,\cdot )=0 & \textrm{in }\; S\M,
\end{array}
\right.
\end{equation}
 \begin{equation}\label{cauchy u2}
\left\{
\begin{array}{lll}
\partial_t u_2(t,x,\theta)+\D u_2(t,x,\theta)-a(x,\theta)u_2(t,x,\theta)=
\I^*_k[u_2](t,x,\theta)  & \textrm{in }\; Q_T,\cr
u_2(T, \cdot,\cdot )=0 & \textrm{in }\; S\M,
\end{array}
\right.
\end{equation}
has a unique solution in $C^1([0,T];L^p(S\M))\cap  C^0([0,T];\mathcal{W}^p)$, of the form $$ u_1(t,x,\theta)=(\alpha_1 \beta_a)(t,x,\theta) e^{i\lambda(t+\tau_-(x,\theta))}+r_{1,\lambda} (t,x,\theta)
  $$ and  $$u_2(t,x,\theta)=(\alpha_2 \beta_{-a})(t,x,\theta) e^{-i\lambda(t+\tau_-(x,\theta))}+r_{2,\lambda} (t,x,\theta).$$
 where \begin{equation}\label{rj tend vers 0} \displaystyle \lim_{\lambda \rightarrow \infty}\norm{r_{j,\lambda}}_{L^2(Q_T)}=0,
  \end{equation}
  and there exist a constant $C>0$ depending on $\M,T,C_0,C_1,C_2$ such that for $j=1,2$, we have
 \begin{equation}\label{estimate rj phij}
 \norm{r_{j,\lambda}}_{C^0([0,T];L^p(S\M))}\leq C \norm{\phi_j}_{L^p(S\widetilde{\M} )}. \end{equation}
\end{lemma}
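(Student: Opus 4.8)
The plan is to insert each geometric optics ansatz into its transport equation, use the identities of Lemmas \ref{lemma apha} and \ref{lemma beta} to annihilate the oscillatory principal term, and thereby reduce the whole construction to solving a source problem of the type (\ref{IBVPsource}) for the remainder $r_{j,\lambda}$. The two announced properties (\ref{rj tend vers 0}) and (\ref{estimate rj phij}) will then be obtained by feeding the bounds of Lemma \ref{lemma Ik Ik*} into the well-posedness estimate (\ref{estimate source}).

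First I would treat $u_1$. Writing $u_1=\alpha_1\beta_a\psi_\lambda+r_{1,\lambda}$ and substituting into (\ref{cauchy u1}), the computation carried out just before (\ref{r1 lamda}) shows that $\alpha_1\beta_a\psi_\lambda$ solves the homogeneous transport equation $\p_t v+\D v+av=0$; hence every principal term cancels and $r_{1,\lambda}$ must solve exactly (\ref{r1 lamda}). This is an instance of (\ref{IBVPsource}) with source $q=\I_k[\alpha_1\beta_a\psi_\lambda]\in L^p(Q_T)$, so Lemma \ref{lemma2} furnishes a unique weak solution $r_{1,\lambda}\in C^1([0,T];L^p(S\M))\cap C^0([0,T];\mathrm{D}(T))$ together with the estimate (\ref{estimate r1 Ik}). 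The initial condition $u_1(0,\cdot,\cdot)=0$ holds automatically, since $\alpha_1(0,\cdot,\cdot)=0$ by Lemma \ref{lemma apha} while $r_{1,\lambda}(0,\cdot,\cdot)=0$ is imposed in (\ref{r1 lamda}); and uniqueness of $u_1$ in the stated form is immediate because $\alpha_1\beta_a\psi_\lambda$ is explicitly prescribed and $r_{1,\lambda}$ is the unique solution of (\ref{r1 lamda}) provided by Lemma \ref{lemma2}. Note that the sum $u_1$ lies in $C^0([0,T];\mathcal{W}^p)$ rather than in $\mathrm{D}(T)$, precisely because the principal term carries the (generally nonzero) outgoing flux on $\Sigma_+$.

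Next I would extract the two conclusions. The uniform bound (\ref{estimate rj phij}) follows by combining (\ref{estimate r1 Ik}) with the first inequality of Lemma \ref{lemma Ik Ik*}, which controls $\norm{\I_k[\alpha_1\beta_a\psi_\lambda](t,\cdot,\cdot)}_{L^p(S\M)}$ by $\norm{\phi_1}_{L^p(S\widetilde{\M})}$ uniformly in $t$ and $\lambda$; taking the supremum over $t\in[0,T]$ yields $\norm{r_{1,\lambda}}_{C^0([0,T];L^p(S\M))}\leq C\norm{\phi_1}_{L^p(S\widetilde{\M})}$, the constant being $\lambda$-independent because $\abs{\psi_\lambda}=1$. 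For the limit (\ref{rj tend vers 0}) I would specialize (\ref{estimate r1 Ik}) to $p=2$, square it and integrate in $t$ to get $\norm{r_{1,\lambda}}_{L^2(Q_T)}^2\leq CT\,\norm{\I_k[\alpha_1\beta_a\psi_\lambda]}_{L^2(Q_T)}^2$, and then conclude using the vanishing (\ref{lim Ik Ik*}) of the source in $L^2(Q_T)$. The construction of $u_2$ is entirely parallel: one substitutes $\alpha_2\beta_{-a}\psi_{-\lambda}+r_{2,\lambda}$ into (\ref{cauchy u2}), reduces to the backward problem (\ref{r2 lamda}) governed by the adjoint operator, and applies the $\I_k^*$-versions of (\ref{estimate r2 Ik*}) and of the estimates in Lemma \ref{lemma Ik Ik*}.

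The genuinely delicate analytic input, namely the decay (\ref{lim Ik Ik*}) of the scattering source, has already been isolated in Lemma \ref{lemma Ik Ik*}, where it rests on the weak convergence $e^{i\lambda\tau_-(\cdot,\cdot)}\rightharpoonup 0$ in $L^2(S\M)$ together with the compactness of the scattering operator. Consequently the only points requiring genuine care in the present assembly are matching the remainder problem (\ref{r1 lamda}) to the abstract source problem (\ref{IBVPsource}) so that Lemma \ref{lemma2} applies verbatim, and verifying that the unimodular oscillatory factor $\psi_\lambda$ leaves all $L^p$-norms unchanged, so that every constant appearing above is independent of $\lambda$.
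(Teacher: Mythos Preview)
Your proposal is correct and follows exactly the approach of the paper: the paper itself presents this lemma as the immediate combination of Lemma~\ref{lemma Ik Ik*} with the a priori estimates (\ref{estimate r1 Ik})--(\ref{estimate r2 Ik*}), and you have spelled out precisely that assembly, including the reduction of each ansatz to the source problem (\ref{r1 lamda})/(\ref{r2 lamda}) handled by Lemma~\ref{lemma2}. Your additional remarks on the initial condition, on why $u_1$ lands in $\mathcal{W}^p$ rather than $\mathrm{D}(T)$, and on the $\lambda$-independence of the constants via $|\psi_\lambda|=1$ are all accurate clarifications of points the paper leaves implicit.
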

\section{Recovery of the absorption coefficient}
\setcounter{equation}{0}
We will use the geometric optics solutions constructed in the previous section to prove that we can determine the absorption coefficient $a(x,\theta)$, up to a gauge transformation, from the albedo operator $\mathcal{A}_{a,k}$. When $a$ depends only on the spatial coordinate $x$, we will show that the albedo operator determines stably the geodesic ray transform of $a$. Then we use the invertibility of this transform, for simple manifolds, to conclude with a stability estimate reconstructing the absorption coefficient $a$ from  $\mathcal{A}_{a,k}$.
\subsection{Preliminaries}
\subsubsection*{Horizontal and vertical covariant derivatives}
  Let $(x^1,\dots,x^n)$ be a local coordinates system in $\M$. Then  any vector field $X \in \mathcal{C}^{\infty}(\M)$ can be uniquely represented as $X=X^i \frac{\p}{\p x^{i}}.$ We abbreviate on the following way: $X=(X^i).$
The terms $X^i$ are called the coordinates of the field $X$ in the given coordinates system.
Let us define the covariant differentiation of smooth  functions and vector fields on $\M$. Denote by  $\Gamma_{kq}^p$  the Christoffel symbols. For a smooth function $v$ on $\M$, we define the field   $\nabla v$ by $$\nabla v=\nabla_kv  dx^{k},$$ where
\begin{equation*}
\nabla_kv=\frac{\p}{\p x^k}v.
\end{equation*}
 For a vector field  $X=(X^i)$ on $\M$, we define the field   $\nabla X$ by $$\nabla X=\nabla_kX^i \frac{\p}{\p x^{i}}\otimes dx^{k},$$ where
\begin{equation*}
\nabla_kX^i=\frac{\p}{\p x^k}X^i+\Gamma^{i}_{kp}X^p.
\end{equation*}
Next, we extend this covariant differentiation for smooth functions and  vector fields on $\M$ to functions and vector  fields defined  on $T\M$. This extension  gives rise to two covectors fields (one-forms).\\
For $u\in\mathcal{C}^\infty(T\M)$,  we define $\overset{\hh}{\nabla}u$ and $\overset{\vv}{\nabla}u$ the horizontal and vertical covariant derivatives of $u$   by
\begin{equation}\label{nabla h u}
\overset{\hh}{\nabla} u=(\overset{\hh}{\nabla}_k u)dx^k,\quad \overset{\hh}{\nabla}_k u=\frac{\p u}{\p x^k}-\Gamma_{kq}^p\theta^q\frac{\p u}{\p\theta^p},
\end{equation}
and
\begin{equation*}\label{nabla v u}
\overset{\vv}{\nabla}u=(\overset{\vv}{\nabla}_ku)dx^k,\quad \overset{\vv}{\nabla}_ku=\frac{\p u}{\p\theta^k}.
\end{equation*}
Notice that in local coordinates system, (\ref{D}) reads \begin{equation}\label{Du nabla u}
            \D u(x,\theta)=\theta^i\overset{\hh}{\nabla}_i u.
          \end{equation}
 For $X$ a smooth vector field on $T\M$, we define $\overset{\hh}{\nabla}X$  and $\overset{\vv}{\nabla}X$ by
\begin{equation*}\label{horizontal}
\overset{\hh}{\nabla}X =\overset{\hh}{\nabla}_k X^i\frac{\p}{\p \theta^{i}}\otimes  dx^k,\;\overset{\hh}{\nabla}_kX^{i} =\frac{\p}{\p x^k}X^i-\Gamma^{p}_{kq}\theta^q \frac{\p}{\p \theta^{p}}X^{i}+\Gamma^{l}_{kp}X^p,
\end{equation*}
and
\begin{equation*}\label{vertical}
\overset{\vv}{\nabla}X =\overset{\vv}{\nabla}_k X^i \frac{\p}{\p \theta^{i}}\otimes dx^k,\;\overset{\vv}{\nabla}_k X^{i}=\frac{\p }{\p \theta^k} X^i.
\end{equation*}
The differential operators $ \overset{\hh}{\nabla}$ and $\overset{\vv}{\nabla}$ are respectively called the \textit{horizontal} and \textit{vertical} covariant derivatives. For more details, we refer to \cite{[Sh1]} Theorem 3.2.1 p 85. We have the following properties ( \cite{[Sh1]}, p 95):
\begin{equation*}\label{nabla k theta i}
\overset{\vv}{\nabla}_k \overset{\hh}{\nabla}_l=\overset{\hh}{\nabla}_l\overset{\vv}
{\nabla}_k,\quad \overset{\hh}{\nabla}_k\theta^i=0 \mbox{ and } \overset{\vv}{\nabla}_k\theta^i=\delta_k^i.
\end{equation*}
Now, we define the horizontal divergence $\overset{\hh}{\dive}$ and the vertical divergence $\overset{\vv}{\dive}$ of a vector field $X$ on $T\M$ by
\begin{equation}\label{def div}
 \overset{\hh}{\dive}(X)= \overset{\hh}{\nabla}_k X^k,\quad \overset{\vv}{\dive}(X)= \overset{\vv}{\nabla}_k X^k.
\end{equation}
In particular, we have  \begin{equation}\label{div theta}
  \overset{\hh}{\dive}(\theta)= \overset{\hh}{\nabla}_k \theta^k=0.
\end{equation}
We have the following \textit{Gauss-Ostrogradskii formula} for  the horizontal divergence (\cite{[Sh1]},(3.6.31)):
\begin{equation}\label{divergence formula horizontal}
\int_{S\M} \overset{\hh}{\dive}(X)\, \dvv= \int_{\p S\M} \langle X,\nu\rangle \, \dss, \quad X\in \mathcal{C}^\infty(T\M).
\end{equation}
 \subsection{Proof of Theorem \ref{th0}}
In fact, we will show the estimate (\ref{ineq -T T}) for $(x,\theta)\in S \widetilde{\M}$. Recall that we have extended $\M$ to a simple compact Riemannian manifold $ \widetilde{\M}$:
\begin{equation*}
  \widetilde{\M}\Supset \M \mbox{ such that } \forall x \in \widetilde{\M}\setminus \M,\;\;dist_{\g}(x,\M) < \epsilon.
  \end{equation*}
Take $k_1,k_2\in\mathscr{K}(C_1,C_2)$ and
$a_1,a_2\in\mathscr{A}_0(C_0)$ satisfying the condition (\ref{cond th0}), that is \begin{equation*}
              \p^\alpha_{x}a_1(x,\theta)= \p^\alpha_{x}a_2(x,\theta), \;x\in \p S\M, \alpha\in \N^n,\abs{\alpha} \leq 1.
 \end{equation*}

We extend $a_{1},a_{2}$ in  $C^{1}(S \widetilde{\M})$ in such a way that $a_1(x,\theta)=a_2(x,\theta),\;\forall x\in \widetilde{\M }\setminus \M,\; \forall \theta \in \mathbb{S}^{n-1}$. As mentioned earlier, the extension of $a_{1},a_{2}$ to  $S\widetilde{\M}$ will also be denoted  by $a_{1},a_{2}$. For all $x\in \widetilde{\M}$,  $(\theta,\theta')\in S_x\widetilde{\M} \times S_x\widetilde{\M}$, we set $$a(x,\theta)=(a_1-a_2)(x,\theta),\quad k(x,\theta,\theta')=(k_1-k_2)(x,\theta,\theta').$$
 For $j=1,2$, we consider a function  $\phi_j$ satisfying (\ref{phi}) and we Let $\alpha_j$ be defined by (\ref{phi1 phi2}) and $\beta_{a_j}$ like in  (\ref{beta})  by taking $a=a_j$.
We denote
\begin{equation*}\label{beta a1-a2}
\beta_{a}(t,x,\theta)=\beta_{a_1}(t,x,\theta)\beta_{-a_2}
(t,x,\theta).
\end{equation*}
Under these hypothesis, we have the following preliminary result.
\begin{lemma}\label{lemma3.1}
 We have \begin{equation}\label{estm lem3.1}
 \bigg| \int_0^T\!\!\!\int_{S\M}  a(x,\theta) (\alpha_1\alpha_2\beta_a)(t,x,\theta) \, \dvv \,dt \bigg| \leq \norm{\mathcal{A}_{a_2,k_2}-\mathcal{A}_{a_1,k_1}} _1 \norm{\phi_{1}}_{L^1(S\widetilde{\M})}\norm{\phi_2}_
{\mathcal{L}^{\infty}(S\widetilde{\M})}.\end{equation}
\end{lemma}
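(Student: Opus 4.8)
The plan is to derive an integral identity in which the interior integral of $a\,\alpha_1\alpha_2\beta_a$ is paired, through an integration by parts, against the difference of the two albedo operators on the boundary, and then to send the large parameter $\lambda$ to infinity so that the oscillating phases cancel. I would first invoke Lemma~\ref{uj estimate rj phij} to produce the geometric optics solution $u_1$ of the forward system \eqref{cauchy u1} for the coefficients $(a_1,k_1)$ and the solution $u_2$ of the adjoint system \eqref{cauchy u2} for $(a_2,k_2)$. Setting $f_-=u_1|_{\Sigma_-}$, uniqueness (Lemma~\ref{lemma1}) identifies $u_1$ with the solution of \eqref{IBVP} for $(a_1,k_1)$ having incoming data $f_-$, so that $\mathcal{A}_{a_1,k_1}f_-=u_1|_{\Sigma_+}$. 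Letting $v$ denote the solution of \eqref{IBVP} for $(a_2,k_2)$ with the same data $f_-$, the difference $w=u_1-v$ satisfies $w(0,\cdot,\cdot)=0$, $w|_{\Sigma_-}=0$, $w|_{\Sigma_+}=(\mathcal{A}_{a_1,k_1}-\mathcal{A}_{a_2,k_2})f_-$, and, subtracting the two transport equations,
\[
\p_t w+\D w+a_2 w-\I_{k_2}[w]=-a\,u_1+\I_k[u_1]\quad\textrm{in } Q_T .
\]

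Next I would test this equation against $u_2$ over $Q_T$ with respect to $\dvv\,dt$. The $\p_t$-term integrates by parts with no endpoint contribution, since $w=0$ at $t=0$ and $u_2=0$ at $t=T$. For the geodesic derivative I would write $\D(w u_2)=\overset{\hh}{\dive}\big((w u_2)\theta\big)$, which is legitimate because $\overset{\hh}{\dive}(\theta)=0$ by \eqref{div theta}, and apply the Gauss--Ostrogradskii formula \eqref{divergence formula horizontal}; the resulting boundary integral $\int_0^T\!\int_{\p S\M} w\,u_2\,\seq{\theta,\nu}\dss\,dt$ reduces to an integral over $\p_+S\M$ because $w$ vanishes on $\Sigma_-$. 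The scattering term is transferred onto $u_2$ via the fibrewise adjointness $\int_{S_x\M}\I_{k_2}[w]\,u_2\,\dd\omega_x=\int_{S_x\M}w\,\I^*_{k_2}[u_2]\,\dd\omega_x$. Because $u_2$ solves the adjoint equation \eqref{cauchy u2}, every interior term falling on $u_2$ cancels, and I am left with
\[
\int_0^T\!\!\int_{S\M} a\,u_1 u_2\,\dvv\,dt=\int_0^T\!\!\int_{S\M}\I_k[u_1]\,u_2\,\dvv\,dt-\int_0^T\!\!\int_{\p_+S\M}\!\!\big(\mathcal{A}_{a_1,k_1}-\mathcal{A}_{a_2,k_2}\big)f_-\;u_2\,\mu\dss\,dt.
\]

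It then remains to let $\lambda\to\infty$. In $u_1u_2$ the factors $e^{i\lambda(t+\tau_-)}$ and $e^{-i\lambda(t+\tau_-)}$ cancel, so the non-oscillating leading term is exactly $\alpha_1\alpha_2\beta_{a_1}\beta_{-a_2}=\alpha_1\alpha_2\beta_a$; the mixed terms and $r_{1,\lambda}r_{2,\lambda}$ are handled by Cauchy--Schwarz together with $\norm{r_{j,\lambda}}_{L^2(Q_T)}\to0$ from \eqref{rj tend vers 0}, so the left-hand side tends to $\int_0^T\int_{S\M}a\,\alpha_1\alpha_2\beta_a\,\dvv\,dt$. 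The scattering term tends to $0$: splitting $\I_k[u_1]=\I_k[\alpha_1\beta_{a_1}\psi_\lambda]+\I_k[r_{1,\lambda}]$, the first part goes to $0$ in $L^2(Q_T)$ by Lemma~\ref{lemma Ik Ik*} and the second by the $L^2$-boundedness of $\I_k$ and $\norm{r_{1,\lambda}}_{L^2(Q_T)}\to0$, while $u_2$ stays bounded in $L^2(Q_T)$. For the boundary term I would bound it by $\norm{\mathcal{A}_{a_1,k_1}-\mathcal{A}_{a_2,k_2}}_1\,\norm{f_-}_{\mathcal{L}^1(\Sigma_-)}\,\norm{u_2}_{\mathcal{L}^\infty(\Sigma_+)}$; on $\Sigma_-$ one has $f_-=\alpha_1\beta_{a_1}e^{i\lambda t}$ with $\abs{\beta_{a_1}}\leq1$, and a change of variables along the geodesic flow (Santalo's formula, Lemma~\ref{lemma0}) gives $\norm{f_-}_{\mathcal{L}^1(\Sigma_-)}\leq\norm{\phi_1}_{L^1(S\widetilde{\M})}$, while on $\Sigma_+$ one has $u_2=\alpha_2\beta_{-a_2}e^{-i\lambda(t+\tau_-)}$ with $\abs{\alpha_2}\leq\norm{\phi_2}_{\mathcal{L}^\infty(S\widetilde{\M})}$ and $\beta_{-a_2}$ uniformly bounded, so that $\norm{u_2}_{\mathcal{L}^\infty(\Sigma_+)}$ is controlled by $\norm{\phi_2}_{\mathcal{L}^\infty(S\widetilde{\M})}$. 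Since this bound is uniform in $\lambda$, passing to the limit yields \eqref{estm lem3.1}.

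The delicate step is the vanishing of the scattering contribution as $\lambda\to\infty$: this is not a plain Riemann--Lebesgue estimate but rests on the compactness of the fibrewise scattering operator combined with the weak convergence $e^{i\lambda\tau_-(\cdot)}\rightharpoonup0$ supplied by Lemma~\ref{lemma Ik Ik*}. One must also keep uniform (in $\lambda$) control of every remainder $r_{j,\lambda}$ and verify that the boundary change of variables, together with the sign $a_j\geq0$ that makes $\abs{\beta_{a_1}}\leq1$, produces no $\lambda$-dependent constants; these are the points I expect to require the most care.
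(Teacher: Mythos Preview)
Your proposal follows the same strategy as the paper: construct $u_1,u_2$ via Lemma~\ref{uj estimate rj phij}, set $w$ as the difference between $u_1$ and the solution $v$ of the $(a_2,k_2)$-problem with the same incoming data, multiply by $u_2$, integrate by parts using \eqref{div theta} and \eqref{divergence formula horizontal}, exploit that $u_2$ solves the adjoint equation to kill the interior terms, and then pass to the limit $\lambda\to\infty$ using \eqref{rj tend vers 0} and Lemma~\ref{lemma Ik Ik*}. The identification of the delicate point (compactness of $\I_k$ plus weak convergence of $e^{i\lambda\tau_-}$) is exactly right.

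The one place where you diverge from the paper is the endpoint estimate on the boundary term. You pair directly in $L^1$--$L^\infty$: since $r_{2,\lambda}|_{\Sigma_+}=0$ by \eqref{r2 lamda}, $u_2|_{\Sigma_+}=\alpha_2\beta_{-a_2}\psi_{-\lambda}$ is explicitly bounded, and similarly $f_-=\alpha_1\beta_{a_1}\psi_\lambda$ on $\Sigma_-$ lies in $\mathcal L^1$. The paper instead works in $\mathcal L^p$--$\mathcal L^{p'}$ for $1<p<2$, bounds $\norm{\mathcal A_{a_2,k_2}-\mathcal A_{a_1,k_1}}_p$ by Riesz--Thorin interpolation between $p=1$ and $p=2$, and then sends $p\to1$. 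Your direct route is cleaner and valid precisely because the boundary traces contain no remainder; the paper's interpolation detour is presumably there to keep the interior integration-by-parts in reflexive spaces $\mathcal W^p$, $\mathcal W^{p'}$ with $1<p<\infty$, but once the identity is established for some such $p$ the left-hand side is $p$-independent and your endpoint bound applies. One caveat: $\beta_{-a_2}$ satisfies $|\beta_{-a_2}|\le e^{C_0T}$ rather than $\le1$, so your $L^\infty$ bound on $u_2|_{\Sigma_+}$ carries a constant depending on $C_0,T$ (as does the paper's \eqref{uj phij}); the lemma as stated omits this constant, but it is harmless for the downstream argument.
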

\begin{proof}
Take $\lambda >0$,  Lemma \ref{uj estimate rj phij} guarantees
the existence of a geometric optics solution $u_1 \in C^1([0,T];L^p(S\M))\cap  C^0([0,T];\mathcal{W}^p)$ on the form
\begin{equation*}
u_1(x,t)=(\alpha_1\beta_{a_1})(x,t)e^{i\lambda (t+\tau_-(x,\theta))}+r_{1,\lambda}(t,x,\theta),
\end{equation*}
to the following Boltzmann equation with initial data:
\begin{equation}\label{u1 u(0)}
\para{\p_t+\D +a_1(x,\theta)}u_1(t,x,\theta)=\I_{k_1}[u_1](t,x,\theta)\quad \textrm{in}\,Q_T, \quad u_1(0,\cdot,\cdot)=0 \quad \textrm{in}\,\, S\M,
\end{equation}
where $r_{1,\lambda}$ satisfies (\ref{r1 lamda}) and
\begin{gather}\label{norm r1 L2}
\norm{r_{1,\lambda}}_{C^0([0,T];L^p(S\M))}\leq C \norm{\phi_1}_{L^p(S\widetilde{\M} )},
\\ \nonumber
\displaystyle \lim_{\lambda \rightarrow \infty}\norm{r_{1,\lambda}}_{L^2(Q_T)}=0.
\end{gather}
Let us denote by $f_\lambda$ the function
$$
f_\lambda(t,x,\theta)=(\alpha_1\beta_{a_1})(x,t)e^{i\lambda (t+\tau_-(x,\theta))},\quad  (t,x,\theta)\in\Sigma_-,
$$
and consider $v$ the solution of the following non-homogenous boundary value problem
\begin{equation*}
\left\{\begin{array}{lll}
\para{\p_t+\D +a_2(x,\theta)}v(t,x,\theta)=\I_{k_2}[v], & \textrm{in}\,\, Q_T,\cr
v(0,x,\theta)=0, & \textrm{in}\,\, S\M,\cr
v(t,x,\theta)=u_1(t,x,\theta):=f_{\lambda}(t,x,\theta), & \textrm{on}\,\, \Sigma_-.
\end{array}
\right.
\end{equation*}
We let $ w=v-u_1$. Then, $w$ solves the following homogenous boundary value problem
\begin{equation}\label{syst w}
\left\{\begin{array}{lll}
\para{\p_t+\D +a_2(x,\theta)}w(t,x,\theta)=\I_{k_2}[w]- \I_{k}[u_1]+a u_1& \textrm{in}\,\,  Q_T,\cr
w(0,x,\theta)=0, & \textrm{in}\,\, S\M,\cr
w(t,x,\theta)=0, & \textrm{on}\,\,  \Sigma_-.
\end{array}
\right.
\end{equation}
Let $p'$ satisfying $\frac{1}{p}+\frac{1}{p'}=1$ then applying Lemma \ref{uj estimate rj phij}, we have a special solution
  $$ u_2 \in C^1([0,T];L^{p'}(S\M))\cap C^0([0,T];\mathcal{W}^{p'}),$$
to the backward Boltzmann equation
\begin{align}\label{syst u2}
\para{\p_t+\D -a_2(x,\theta)}u_2(t,x,\theta)=-\I^*_{k_2}[u_2](t,x,\theta)\quad \textrm{in}\,Q_T, \quad u_2(T,\cdot,\cdot)=0 \quad \textrm{in}\,\, S\M,
\end{align}
having the special form
\begin{equation*}
u_2(t,x,\theta)=(\alpha_2\beta_{-a_2})(t,x,\theta) e^{-i\lambda (t+\tau_-(x,\theta))}+r_{2,\lambda}(t,x,\theta),
\end{equation*}
where
\begin{gather}\label{syst r2}
\norm{r_{2,\lambda}}_{C^0([0,T];L^{p'}(S\M))}\leq C \norm{\phi_2}_{L^{p'}(S\widetilde{\M} )},
\\ \nonumber
\displaystyle \lim_{\lambda \rightarrow \infty}\norm{r_{2,\lambda}}_{L^2(Q_T)}=0.
\end{gather}
We first multiply the transport equation  in (\ref{syst w}) by $u_2$, then we integrate by parts over $Q_T$ by taking into account the conditions satisfied by $w$ and $u_2$ at the moments $0$ and $T$. Using the fact that we have
$$\int_0^T\!\!\!\int_{S\M} u_2  \I_{k_2}[w] \, \dvv dt \,=\int_0^T\!\!\!\int_{S\M} w\I_{k_2}^*[u_2] \, \dvv \,dt,
 $$ the fact that $u_2$ solves  (\ref{syst u2}) and the property $u_2 \D w=\D (wu_2)-w \D u_2$, we get
$$\int_0^T\!\!\!\int_{S\M} \big( a(x,\theta)u_2 u_1-u_2 \I_k[u_1] \big) \, \dvv \,dt=\int_0^T\!\!\!\int_{S\M} \D(wu_2)  \, \dvv \,dt.$$
We have $ \D (wu_2)(x,\theta)=\theta^i\overset{\hh}{\nabla}_i(wu_2)(x,\theta)$ (see (\ref{Du nabla u})). Using (\ref{def div}), we easily verify that $ \overset{\hh}{\dive}(f X)= (\overset{\hh}{\nabla}_i f) X^i+f \overset{\hh}{\dive}X$ where $f$ is a function and $X$ is a vector field on $S\M$. Then we use successively (\ref{div theta}), the Gauss-Ostrogradskii formula of the horizontal divergence (\ref{divergence formula horizontal}) and another time the first equality of  (\ref{syst u2}), to obtain $$ \int_0^T\!\!\!\int_{S\M} a(x,\theta)u_2 u_1-u_2 \I_k[u_1] \, \dvv (x,\theta) \,dt=\int_0^T\!\!\!\int_{\p S\M} \langle \theta,\nu(x) \rangle wu_2  \, \dss \,dt.$$
We have  $w=v-u_1=0$ on $\Sigma_-$ and $w_{|\Sigma_+}=v_{|\Sigma_+}-u_{1|\Sigma_+}=
\mathcal{A}_{a_2,k_2}[f_\lambda]-\mathcal{A}_{a_1,k_1}[f_\lambda]$. So, we deduce that
\begin{multline}\label{equation integr1}
\int_0^T\!\!\!\int_{S\M} \big(a(x,\theta)u_1u_2-u_2\I_k[u_1]\big) \, \dvv \,dt
=\int_0^T\!\!\!\int_{\p_+S\M} u_2 \para{\mathcal{A}_{a_2,k_2}-\mathcal{A}_{a_1,k_1}} (f_{\lambda})  \,\langle \theta,\nu \rangle  \dss\,dt .
\end{multline}
Now, we shall estimate each one of the terms in the left hand  side member of the last equality.
Replacing $u_1$ and $u_2$ by their expressions, it follows that
\begin{equation}\label{member1}
\int_0^T\!\!\!\int_{S\M} a(x,\theta) u_1u_2(t,x,\theta) \, \dvv \,dt=
\int_0^T\!\!\!\int_{S\M} a(x,\theta)\alpha_1\alpha_2\beta_a(t,x,\theta) \, \dvv\,dt+\mathscr{R}_{1,\lambda},
\end{equation}
where \begin{multline*}
\mathscr{R}_{1,\lambda}:=\int_0^T\!\!\!\int_{S\M} a(x,\theta) \alpha_1 \beta_{a_1}\psi_{\lambda}r_{2,\lambda}(t,x,\theta) \, \dvv \,dt+\cr \int_0^T\!\!\!\int_{S\M}\big( a(x,\theta) \alpha_2 \beta_{-a_2}\psi_{-\lambda}r_{1,\lambda}(t,x,\theta) +a(x,\theta) r_{1,\lambda}r_{2,\lambda}(t,x,\theta) \big) \, \dvv \,dt.
\end{multline*}
Here $\psi_{\lambda}$ is defined by (\ref{psi lambda}). So, there exist a constant $C$ depending on $C_0$ and $T$ that verifies $$\abs{\mathscr{R}_{1,\lambda}}\leq C\big(\norm{r_{1,\lambda}}_{L^2(Q_T)}+\norm{r_{2,\lambda}}_{L^2(Q_T)} \big).$$
Similarly, we have  \begin{equation}\label{member2}
\int_0^T\!\!\!\int_{S\M} u_2\I_k[u_1](t,x,\theta) \, \dvv \,dt=
\int_0^T\!\!\!\int_{S\M} \alpha_2\beta_{-a_2}\psi_{-\lambda} \I_k[ \alpha_1\beta_{a_1}\psi_{\lambda}](t,x,\theta) \, \dvv \,dt+\mathscr{R}_{2,\lambda},
\end{equation}
where \begin{multline*}
\mathscr{R}_{2,\lambda}:=\int_0^T\!\!\!\int_{S\M} \alpha_2 \beta_{-a_2}\psi_{-\lambda}\I_k[r_{1,\lambda}](t,x,\theta) \, \dvv \,dt \cr + \int_0^T\!\!\!\int_{S\M}\big( r_{2,\lambda} \I_k[ \alpha_1\beta_{a_1}\psi_{\lambda}](t,x,\theta) +r_{2,\lambda}\I_k[r_{1,\lambda}](t,x,\theta)\big) \, \dvv \,dt,
\end{multline*} and there exist a constant $C$ depending on $C_0$ and $T$ such that $$\abs{\mathscr{R}_{2,\lambda}}\leq C\big(\norm{r_{1,\lambda}}_{L^2(Q_T)}+\norm{r_{2,\lambda}}_{L^2(Q_T)} \big).$$
 In light of (\ref{norm r1 L2}) and (\ref{syst r2}), we have \begin{equation*}\label{lim R1 R2}  \displaystyle \lim_{\lambda \rightarrow \infty} \mathscr{R}_{1,\lambda}=\lim_{\lambda \rightarrow \infty} \mathscr{R}_{2,\lambda}=0.\end{equation*}
Applying the Cauchy-Schwartz inequality and (\ref{lim Ik Ik*}), we obtain that $$\displaystyle \lim_{\lambda \rightarrow \infty}\int_0^T\!\!\!\int_{S\M} \alpha_2\beta_{-a_2}\psi_{-\lambda} \I_k[ \alpha_1\beta_{a_1}\psi_{\lambda}](t,x,\theta) \, \dvv\,dt=0.$$
Introducing (\ref{member1}) and (\ref{member2}) in the equation (\ref{equation integr1}), we end up with
 \begin{multline}\label{equation integr2}
 \int_0^T\!\!\!\int_{S\M}  a(x,\theta) (\alpha_1\alpha_2\beta_a)(t,x,\theta) \, \dvv \,dt= \cr \int_0^T\!\!\!\int_{\p_+S\M} u_2 (t,x,\theta) \para{\mathcal{A}_{a_2,k_2}-\mathcal{A}_{a_1,k_1}} (f_{\lambda}) (t,x,\theta) \,\langle \theta,\nu(x) \rangle  \dss\,dt+ \mathscr{R}_{\lambda}.
\end{multline}
Here, $$\mathscr{R}_{\lambda}=-\mathscr{R}_{1,\lambda}+\mathscr{R}_{2,
\lambda}+\int_0^T\!\!\!\int_{S\M} \alpha_2\beta_{-a_2}\psi_{-\lambda} \I_k[ \alpha_1\beta_{a_1}\psi_{\lambda}](t,x,\theta) \, \dvv \,dt,$$ and we have  $$\displaystyle \lim_{\lambda \rightarrow \infty} \mathscr{R}_{\lambda}=0.$$
On the other hand and since the albedo operator is bounded, we have \begin{multline}\label{terme bord}\bigg| \int_0^T\!\!\!\int_{\p_+S\M} u_2 (t,x,\theta) \para{\mathcal{A}_{a_2,k_2}-\mathcal{A}_{a_1,k_1}} (f_{\lambda}) (t,x,\theta) \,\langle \theta,\nu(x) \rangle  \dss\,dt \bigg|\cr\leq \norm{\para{\mathcal{A}_{a_2,k_2}-\mathcal{A}_{a_1,k_1}} (f_{\lambda})}_{\mathcal{L}^p(\Sigma_+)}\norm{u_2}_{\mathcal{L}^{p'}
(\Sigma_+)} \cr \leq \norm{\mathcal{A}_{a_2,k_2}-\mathcal{A}_{a_1,k_1}} _p \norm{f_{\lambda}}_{\mathcal{L}^p(\Sigma_-)}\norm{u_2}_{\mathcal{L}
^{p'}(\Sigma_+)},\end{multline} where $\frac{1}{p}+\frac{1}{p'}=1$. By the boundary condition $r_2=0$ on $\Sigma_+$ in (\ref{r2 lamda}) and the definition of $\alpha_j,\;j=1,2$ given in (\ref{phi1 phi2}), we deduce that
\begin{align}\label{uj phij}
\norm{u_2}_{\mathcal{L}^{p'}(\Sigma_+)}\leq C \norm{\phi_2}_{L^{p'}(S\widetilde{\M})}\leq C \norm{\phi_2}_{L^{\infty}(S\widetilde{\M})},\quad \norm{f_{\lambda}}_{\mathcal{L}^p(\Sigma_-)}\leq  C \norm{\phi_1}_{L^{p}(S\widetilde{\M})}.
\end{align} Note that for $1<p<2$, we get by the Riesz-Thorin's Theorem $$\norm{\mathcal{A}_{a_2,k_2}-\mathcal{A}_{a_1,k_1}} _p \leq \norm{\mathcal{A}_{a_2,k_2}-\mathcal{A}_{a_1,k_1}} _1^{1-\kappa}\norm{\mathcal{A}_{a_2,k_2}-\mathcal{A}_{a_1,k_1}} _2^\kappa,$$ with $\kappa=2-\frac{2}{p}$. Hence $$\displaystyle \lim \sup_{p \rightarrow 1}\norm{\mathcal{A}_{a_2,k_2}-\mathcal{A}_{a_1,k_1}} _p \leq \norm{\mathcal{A}_{a_2,k_2}-\mathcal{A}_{a_1,k_1}} _1.$$
We also have $\displaystyle \lim_{p \rightarrow 1}\norm{f_{\lambda}}_{\mathcal{L}^p(\Sigma_-)}=\norm{f_{\lambda}}_
{\mathcal{L}^1(\Sigma_-)}$, $\displaystyle \lim_{p \rightarrow 1}\norm{\phi_1}_{L^{p}(S\widetilde{\M})}=\norm{\phi_1}_
{L^{1}(S\widetilde{\M})}$, then (\ref{equation integr2}) yields to the claimed estimate (\ref{estm lem3.1}).
This completes the proof of the Lemma.
\end{proof}

To prove the estimate (\ref{ineq -T T}) of Theorem \ref{th0}, we make a suitable choice of the functions $\alpha_1$ and $\alpha_2$ to introduce in (\ref{estm lem3.1}). We will use an approximation of the identity on the boundary sphere. More precisely, we will consider the Poisson kernel of $B(0,1)\subset T_{x} \widetilde{\M}$. Any other approximation may agree.
\smallskip
We define the Poisson kernel in $B(0,1)\subset T_{x} \widetilde{\M}$ by setting
$$
P(\xi,\theta)=\frac{1-\abs{\xi}^2}{\omega_n\abs{\xi-\theta}^n},\quad \xi\in B(0,1);\,\, \theta\in S_{x} \widetilde{\M}.
$$ Here $\omega_n$ is the volume of the unit ball of $\R^n$.
For $0<h<1$, we define $\Psi_h: S_{x} \widetilde{\M}\times S_{x} \widetilde{\M} \to \R$ as
\begin{align}\label{psi h}
\Psi_h(\xi,\theta)=P(h \xi,\theta).
\end{align}

\begin{lemma}\label{properties psi h}
Let $x\in \widetilde{\M}$, we have the following properties:
\begin{equation}\label{prop1}
\displaystyle 0\leq\Psi_h(\xi,\theta)\leq \frac{2}{\omega_n(1-h)^{n-1}},\quad \forall\, h\in (0,1),\,\forall\,\xi,\theta \in S_{x} \widetilde{\M}.
\end{equation}
\begin{equation}\label{prop2}
 \displaystyle \int_{S_{x} \widetilde{\M}}\Psi_h(\xi,\theta) d\omega_{x}(\theta)=1,\quad \forall h\in (0,1),\,\forall\,\xi \in S_{x} \widetilde{\M}.
\end{equation}
For any $\upsilon \in C(S_{x} \widetilde{\M})$, we have the following  Poisson formula:
\begin{equation}\label{prop4}
\displaystyle \lim_{h \rightarrow 1}\int_{S_{x} \widetilde{\M}}\Psi_h(\xi,\theta) \upsilon(\theta)d\omega_{x}(\theta)=\upsilon(\xi),\,\forall\,\xi \in S_{x} \widetilde{\M},
\end{equation}
here the limit is taken in the topology of $L^p(S_{x} \widetilde{\M})$, $p\in [1,+\infty[$ and uniformly on $S_{x} \widetilde{\M}$.
\end{lemma}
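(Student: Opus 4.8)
The plan is to reduce all three assertions to the classical theory of the Poisson kernel on the Euclidean unit sphere and then to recognize $\Psi_h(\xi,\cdot)$ as an approximate identity as $h\to 1$. First I would fix $x\in\widetilde{\M}$ and choose a basis of $T_x\widetilde{\M}$ orthonormal for $\seq{\cdot,\cdot}$ at $x$; this yields a linear isometry of $(T_x\widetilde{\M},\seq{\cdot,\cdot})$ onto Euclidean $\R^n$ carrying $S_x\widetilde{\M}$ onto the standard sphere $\s^{n-1}$, the measure $\dd\omega_x$ onto the standard surface measure, and $P$ onto the usual Poisson kernel. Since \eqref{prop1}, \eqref{prop2} and \eqref{prop4} are invariant under this identification, it suffices to argue on $\s^{n-1}$, where $\abs{\xi}=\abs{\theta}=1$. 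Property \eqref{prop1} is then an elementary estimate: from $\abs{\xi}=1$ we get $1-\abs{h\xi}^2=1-h^2=(1-h)(1+h)$, while the reverse triangle inequality gives $\abs{h\xi-\theta}\geq\big|\,\abs{\theta}-\abs{h\xi}\,\big|=1-h$, hence $\abs{h\xi-\theta}^n\geq(1-h)^n$. Combining these,
\[
0\leq \Psi_h(\xi,\theta)=\frac{(1-h)(1+h)}{\omega_n\abs{h\xi-\theta}^n}\leq \frac{1+h}{\omega_n(1-h)^{n-1}}\leq \frac{2}{\omega_n(1-h)^{n-1}},
\]
which is \eqref{prop1}, nonnegativity being clear since numerator and denominator are positive.

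Property \eqref{prop2} is the normalization identity for the Poisson kernel. I would recall that $\zeta\mapsto P(\zeta,\theta)$ is harmonic on the open unit ball and that the Poisson integral $\zeta\mapsto\int_{\s^{n-1}}P(\zeta,\theta)\upsilon(\theta)\,\dd\omega_x(\theta)$ solves the Dirichlet problem for the Laplacian on $B(0,1)$; applying this with the constant datum $\upsilon\equiv 1$ and invoking uniqueness of the harmonic extension forces the integral to equal $1$ at every interior point, in particular at $\zeta=h\xi$. The constant in the definition of $P$ is exactly the one making the total mass equal to one.

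With \eqref{prop1} and \eqref{prop2} in hand, the family $\{\Psi_h(\xi,\cdot)\}$ consists of nonnegative kernels of unit mass, so \eqref{prop4} follows from the standard approximate-identity argument once concentration at $\theta=\xi$ is checked. For $\delta>0$ and $\abs{\theta-\xi}\geq\delta$ one has $\abs{h\xi-\theta}\geq\abs{\theta-\xi}-(1-h)\geq\delta-(1-h)$, whence $\Psi_h(\xi,\theta)\leq (1-h^2)\big/\big(\omega_n(\delta-(1-h))^n\big)\to 0$ uniformly in such $\xi,\theta$ as $h\to 1$. Writing the difference as $\int_{\s^{n-1}}\Psi_h(\xi,\theta)\big(\upsilon(\theta)-\upsilon(\xi)\big)\,\dd\omega_x(\theta)$ and splitting the sphere into $\{\abs{\theta-\xi}<\delta\}$ and its complement, the near part is controlled by the modulus of continuity of $\upsilon$ (and the unit mass), the far part by $2\norm{\upsilon}_{L^\infty}$ times the vanishing far-field mass. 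As $\upsilon$ is uniformly continuous on the compact sphere, both bounds are uniform in $\xi$, giving the uniform Poisson formula; convergence in $L^p(S_x\widetilde{\M})$ for $p\in[1,\infty)$ then follows because $S_x\widetilde{\M}$ has finite measure. The main obstacle here is \eqref{prop2}: parts \eqref{prop1} and \eqref{prop4} are elementary once the Euclidean reduction is performed, whereas \eqref{prop2} rests on the harmonicity and Dirichlet-problem characterization of the Poisson kernel and on correctly matching the normalizing constant to the conventions adopted for $\omega_n$ and for the measure $\dd\omega_x$.
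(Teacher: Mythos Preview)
Your proposal is correct and follows essentially the same approach as the paper: both reduce to the Euclidean sphere via a linear isometry of $(T_x\widetilde{\M},\seq{\cdot,\cdot})$ onto $\R^n$ (you phrase this as choosing a $\g$-orthonormal basis, the paper writes it explicitly as $\hat\theta=\gamma^{-1}\theta$ with $\gamma=\sqrt{\g}$), and then invoke the classical properties of the Poisson kernel on $\mathbb{S}^{n-1}$. Your write-up is in fact more detailed than the paper's, which declares \eqref{prop1} obvious and simply cites the standard normalization and Poisson formula for \eqref{prop2} and \eqref{prop4}; your explicit triangle-inequality bound and approximate-identity argument are welcome elaborations but not a different strategy.
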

We emphasize that when $x\in \widetilde{\M}\setminus \widetilde{\M}$, then $S_{x} \widetilde{\M}=\mathbb{S}^{n-1}$ and  $|\cdot |$ is the euclidian norm. In fact, in this section, we use this Lemma only for $x \in \widetilde{\M}\setminus \widetilde{\M}$. It is in the following section that we will use it on all $\widetilde{\M}$. We find the proof of this result in the Appendix \ref{proof psi}.\\
Now we are ready to prove Theorem \ref{th0}. We will proceed in two steps.
We start by proving  (\ref{ineq -T T}) for $x\in \widetilde{\M}\setminus \M,\; \xi\in \mathbb{S}^{n-1}$  and after for $(x,\xi)\in S\M$. \\
 Recall that we have extended $a$ by $0$ outside $\M$.  
Then Lemma \ref{lemma0} applied to the manifold $\widetilde{\M}$ gives \begin{multline*}
 \int_0^T\!\!\!\int_{S\M}  a(x,\theta) (\alpha_1\alpha_2\beta_a)(t,x,\theta) \, \dvv \,dt\cr
  =\int_0^T\!\!\!\int_{S\widetilde{\M}}  a(x,\theta) (\phi_1\phi_2)(\gamma_{x,\theta}(-t),\dot{\gamma}_
{x,\theta}(-t))\exp\bigg[-\displaystyle \int_{0}^{t}a(\gamma_{x,\theta}(-s),\dot{\gamma}_{x,\theta}(-s)) \;ds \bigg] \, \dvv \,dt\cr= \int_0^T\!\!\!\int_{\p_{-}S\widetilde{\M}} \int_{\R} a\big(\gamma_{x',\theta'}(r),\dot{\gamma}_{x',\theta'}(r)\big) (\phi_1\phi_2)(\gamma_{x',\theta'}(r-t),\dot{\gamma}_{x',\theta'}(r-t)) \cr\exp\bigg[-\displaystyle \int_{0}^{t}a\big(\gamma_{x',\theta'}(r-s),\dot{\gamma}_{x',\theta'}
(r-s)\big) \;ds \bigg] \,\mu(x',\theta')\; dr\dss \;dt .\end{multline*} To compress the  notation, we write $$\phi_{t}(x,\theta)=(\gamma_{x,\theta}(t),
\dot{\gamma}_{x,\theta}(t)),$$ where $\phi_{t}$ is the geodesic flow defined in (\ref{flowphit}).
We perform successively two variables changes. We set $l=r-t$, then we find \begin{multline*}
\int_0^T\!\!\!\int_{S\M}  a(x,\theta) (\alpha_1\alpha_2\beta_a)(t,x,\theta) \, \dvv\,dt
 = \int_0^T\!\!\!\int_{\p_{-}S\widetilde{\M}} \int_{\R} a\big(\phi_{l+t}(x',\theta')\big) (\phi_1\phi_2)(\phi_{l}(x',\theta'))
 \cr\exp\bigg[-\displaystyle \int_{0}^{t}a\big(\phi_{l+t-s}(x',\theta')\big) \;ds \bigg] \,\mu(x',\theta')\; dl\dss \;dt .
   \end{multline*} Then, by the variable change   $s'=t-s$, we get
 \begin{multline*}
 \int_0^T\!\!\!\int_{S\M}  a(x,\theta) (\alpha_1\alpha_2\beta_a)(t,x,\theta) \, \dvv\,dt
 =\int_0^T\!\!\!\int_{\p_{-}S\widetilde{\M}} \int_{\R} a\big(\phi_{l+t}(x',\theta')\big) (\phi_1\phi_2)(\phi_{l}(x',\theta')) \cr\exp\bigg[-\displaystyle \int_{0}^{t}a\big(\phi_{l+s'}(x',\theta')\big) \;ds'\bigg]  \,\mu(x',\theta')\; dl\dss \;dt \cr
 =\int_{\p_{-}S\widetilde{\M}} \int_{\R}\!\!\!  \, (\phi_1\phi_2)(\phi_{l}(x',\theta')) \displaystyle \int_0^T \!\!\! \frac{d}{dt}\bigg[-\exp  \bigg(-\displaystyle \int_{0}^{t}a\big(\phi_{l+s}(x',\theta')\big) \;ds\bigg)  \bigg ]\;dt \,\mu(x',\theta') \; dl\dss  \\
= \int_{\p_{-}S\widetilde{\M}}\int_{\R} \, (\phi_1\phi_2)(\phi_{l}(x',\theta')) \bigg[1-\exp  \bigg(-\displaystyle \int_{0}^{T}a\big(\phi_{l+s}(x',\theta')\big) \;ds\bigg)  \bigg] \,\mu(x',\theta')\; dl\dss
       \end{multline*}
Upon using Lemma \ref{lemma0}, we obtain
 \begin{multline*}
 \int_0^T\!\!\!\int_{S\M}  a(x,\theta) (\alpha_1\alpha_2\beta_a)(t,x,\theta) \, \dvv\,dt
 = \cr \int_{S\widetilde{\M}}\,(\phi_1\phi_2)(x,\theta) \bigg[1-\exp  \bigg(-\displaystyle \int_{0}^{T}\!\!\!  \, a\big(\gamma_{x,\theta}(s),\dot{\gamma}_
{x,\theta}(s))\big) \;ds\bigg)  \bigg] \, \, \dvv.
       \end{multline*}
Recall that we have extended the metric $\g$ by the identity outside $\M$.
 Due to our assumption on the metric; that is $(\widetilde{\M},\g)$ is simple, the following parallel translation map is globally well-defined: given
 $x, y\in \widetilde{\M}$, we define $$\mathcal{P}(\cdot;x,y):S_x\widetilde{\M} \longrightarrow S_y\widetilde{\M}$$ such that $\mathcal{P}(\theta;x,y)$ is the parallel translation of $\theta$
along the (unique) geodesic joining $x$ and $y$.
 We take $x_0 \in  \widetilde{\M} \setminus \M$ and $\xi \in S_{x_0}\widetilde{\M}= \mathbb{S}^{n-1}$. We let $\Psi_h$ the positive function given by (\ref{psi h}). We choose $\rho_1,\rho_2\in C_0^{\infty}(\R^n)$ with $ \mathrm{supp}(\rho_j)\subset (\widetilde \M \setminus \M)$ and we set
 \begin{equation}\label{phij rhoj}
   \phi_1(x,\theta)=\rho_1(x)\Psi_h(\mathcal{P}(\xi;x_0,x),\theta) \mbox{ and  } \phi_2(x,\theta)=\rho_2(x).\end{equation}
Note that if $x\in \R^n\setminus \M$ then $\phi_1(x,\theta)=\rho_1(x)\Psi_h(\xi,\theta)$, $\forall \theta \in  \s^{n-1}$. Hence, we have
 \begin{multline*}
 \int_0^T\!\!\!\int_{S\M}  a(x,\theta) (\alpha_1\alpha_2\beta_a)(t,x,\theta) \, \dvv\,dt
 =\cr \int_{ \widetilde{\M}\setminus \M} \rho_1(x)\rho_2(x)  \int_{ \mathbb{S}^{n-1}} \, \Psi_h(\xi,\theta) \bigg[1-\exp  \big(-\displaystyle \int_{0}^{T}\, a\big(\gamma_{x,\xi}(s),\dot{\gamma}_{x,\xi}(s)\big)\, ds \big)  \bigg] \, \dvv.
\end{multline*}
We use (\ref{prop2}) to estimate $ \norm{\phi_{1}}_{L^1(S\widetilde{\M})}$  and we apply Lemma \ref{lemma3.1}, then we obtain  \begin{multline*}
  \bigg| \int_{\widetilde{\M}\setminus \M} \,\rho_1(x)\rho_2(x) \int_{ \mathbb{S}^{n-1}} \, \Psi_h(\xi,\theta) \bigg[1-\exp  \bigg(- \displaystyle \int_{0}^{T} \,a\big(\gamma_{x,\theta}(s), \dot{ \gamma}_{x,\theta}(s)\big)\bigg) \, ds \bigg]  \, \, \dvv \bigg| \cr\leq \norm{\mathcal{A}_{a_2,k_2}-\mathcal{A}_{a_1,k_1}} _1 \norm{\rho_{1}}_{L^1(\widetilde{\M})}\norm{\rho_2}_
{L^{\infty}(\widetilde{\M})} .\end{multline*}
The property (\ref{prop1}) shows that $\Psi_h(\xi,\theta)$ is bounded independently of $x$. So taking the limit when $h \rightarrow 1$ by applying the dominated convergence Theorem then using the Poisson formula (\ref{prop4}), we find \begin{multline*}
 \bigg| \int_{ \widetilde{\M}\setminus \M} \rho_1(x)\rho_2(x) \bigg[1-\exp  \big(-\displaystyle \int_{0}^{T}\, a\big(\gamma_{x,\xi}(s),\dot{\gamma}_{x,\xi}(s)\big)\, ds \big)  \bigg] \, \dvv \bigg|\\ \leq  \norm{\mathcal{A}_{a_2,k_2}-\mathcal{A}_{a_1,k_1}} _1 \norm{\rho_{1}}_{L^1(\widetilde{\M})}\norm{\rho_2}_
{L^{\infty}(\widetilde{\M})}
.\end{multline*}
For any $\xi \in \s^{n-1}$, the left side term in the last inequality is a bilinear form in $(\rho_1,\rho_2)\in L^1(\widetilde{\M}) \times L^{\infty}(\widetilde{\M})$ with norm $$\bigg\| 1-\exp  \big(-\displaystyle \int_{0}^{T}\, a(\gamma_{x,\xi}(s),\dot{\gamma}_{x,\xi}(s))\, ds \big)   \bigg\|_
{L^{\infty}(\widetilde{\M}\setminus \M)}.$$ Thus, for any $\xi \in \s^{n-1}$, we have
\begin{equation*}
\bigg\| 1-\exp  \big(-\displaystyle \int_{0}^{T}\, a(\gamma_{x,\xi}(s),\dot{\gamma}_{x,\xi}(s))\, ds \big)   \bigg\|_
{L^{\infty}(\widetilde{\M}\setminus \M)} \leq \norm{\mathcal{A}_{a_2,k_2}-\mathcal{A}_{a_1,k_1}} _1.\end{equation*}
Using the fact that $|X|\leq e^{m}\,|e^{X}-1|$ for any $|X|\leq m$, we get \begin{equation} \label{ineq[0,T]}
\Big|\displaystyle\int_{0}^T\!\!\! \,\,\!\!\! a(\gamma_{x,\xi}(s),\dot{\gamma}_{x,\xi}(s)) \;ds\Big| \leq C  \norm{\mathcal{A}_{a_2,k_2}-\mathcal{A}_{a_1,k_1}} _1,\, \forall x \in \widetilde{\M}\setminus \M,\, \xi \in \mathbb{S}^{n-1},\end{equation} where we take $X=\displaystyle \int_{0}^{T}\, a(\gamma_{x,\xi}(s),\dot{\gamma}_{x,\xi}(s))\, ds$.\\
Now let us prove that  \begin{equation} \label{ineq[-T,0]} \Big|\displaystyle\int_{-T}^0\!\!\! \,\,\!\!\! a(\gamma_{x,\xi}(s),\dot{\gamma}_{x,\xi}(s)) \;ds\Big| \leq C  \norm{\mathcal{A}_{a_2,k_2}-\mathcal{A}_{a_1,k_1}} _1,\,\forall x \in \widetilde{\M}\setminus \M,\, \xi \in \mathbb{S}^{n-1}. \end{equation}

 Since the metric of $\widetilde{\M}$ is simple, we have two cases: either $(\gamma_{x,\xi}(-s))_{s\geq 0}$ intersects or does not intersect $\M$.
At first, we suppose that for each  $ s \geq 0$, we have $ \gamma_{x,\xi}(-s) \notin \M$. Since we have extended $a$ by $0$ outside $\M$, we have  $\displaystyle\int_{-T}^0\!\!\! \,\,\!\!\! a(\gamma_{x,\xi}(s),\dot{\gamma}_{x,\xi}(s)) \;ds=\displaystyle\int_{0}^T\!\!\! \,\,\!\!\! a(\gamma_{x,\xi}(-s),\dot{\gamma}_{x,\xi}(-s)) \;ds=0$ and therefore the claimed inequality (\ref{ineq -T T}) holds true in this case.\\
For the second case, we assume that for some $s \geq 0$, $\gamma_{x,\xi}(-s) \in \M$. We define $$s_0:=\sup\{s \geq 0,\, \gamma_{x,\xi}(-s)\in \M \}.$$ Since $T >diam_g(\M)+2\epsilon$, we have $\displaystyle\int_{0}^T\!\!\! \,\,\!\!\! a(\gamma_{x,\xi}(-s),\dot{\gamma}_{x,\xi}(-s)) ds=\displaystyle\int_{0}^{s_0+\frac{\epsilon}{2}}\!\!\! \,\,\!\!\! a(\gamma_{x,\xi}(-s),\dot{\gamma}_{x,\xi}(-s)) ds$.  We make the variable change $s'=s_0+\frac{\epsilon}{2}-s$ and we set $(y,\theta)=(\gamma_{x,\xi}(-s_0-\frac{\epsilon}{2}),
      \dot{\gamma}_{x,\xi}(-s_0-\frac{\epsilon}{2}))$. Then  $\gamma_{y,\theta}(s_0+\frac{\epsilon}{2})=x$ and since we have (\ref{M tilde}) then $y \in \widetilde{\M}\setminus \M$ and we get $$\displaystyle\int_{0}^{s_0+\frac{\epsilon}{2}}\!\!\! \,\,\!\!\! a(\gamma_{x,\xi}(-s),\dot{\gamma}_{x,\xi}(-s)) \;ds=\displaystyle \int_{0}^{T}\!\!\! \,\,\!\!\! a(\gamma_{y,\theta}(s),\dot{\gamma}_{y,\theta}(s)) \;ds.$$ Therefore, we have  $$\displaystyle\int_{-T}^0\!\!\! \,\,\!\!\! a(\gamma_{x,\xi}(s),\dot{\gamma}_{x,\xi}(s)) \;ds=\displaystyle\int_{0}^{T}\!\!\! \,\,\!\!\! a(\gamma_{y,\theta}(s),\dot{\gamma}_{y,\theta}(s)) \;ds.$$
    Applying (\ref{ineq[0,T]}) for $y \in \widetilde{\M}\setminus \M,\,\theta \in \mathbb{S}^{n-1}$, we get (\ref{ineq[-T,0]}) and therefore (\ref{ineq -T T}) is proven for every $x \in  \widetilde{\M}\setminus \M$ and $\xi\in \mathbb{S}^{n-1}$.
Now we will  prove that (\ref{ineq -T T}) is also valid for $(x,\xi)\in S\M$. \\
We take  $(x,\xi)\in S\M$ then there exist a real $s\in \R$ such that $\gamma_{x,\xi}(s)\in \widetilde{\M}\setminus \M$. Let $s_1$ be a such real. We make a variable change by taking $s'=s-s_1$ then we set $(x_1,\xi_1)=(\gamma_{x,\xi}(s_1), \dot{\gamma}_{x,\xi}(s_1))$, we obtain $$\displaystyle\int_{-T}^T\!\!\! \,\,\!\!\! a(\gamma_{x,\xi}(s),\dot{\gamma}_{x,\xi}(s)) \;ds=\displaystyle\int_{-T-s_1}^{T-s_1}\!\!\! \,\,\!\!\! a(\gamma_{x,\xi}(s_1+s),\dot{\gamma}_{x,\xi}(s_1+s)) \;ds=\displaystyle\int_{-T-s_1}^{T-s_1}\!\!\! \,\,\!\!\! a(\gamma_{x_1,\xi_1}(s),\dot{\gamma}_{x_1,\xi_1}(s)) \;ds.$$ Since we have  $T>Diam_{\g} \M+2\epsilon $, then $\displaystyle\int_{-T}^T\!\!\! \,\,\!\!\! a(\gamma_{x,\xi}(s),\dot{\gamma}_{x,\xi}(s)) \;ds=\displaystyle\int_{-T}^{T}\!\!\! \,\,\!\!\! a(\gamma_{x_1,\xi_1}(s),\dot{\gamma}_{x_1,\xi_1}(s)) \;ds$. We apply (\ref{ineq -T T}), which is established for points in $\widetilde{\M}\setminus \M$ and their tangent vectors, to $(x_1,\xi_1)$ where $x_1 \in \widetilde{\M}\setminus \M$ and $\xi_1 \in  \mathbb{S}^{n-1}$. We infer that
$$\bigg|\displaystyle\int_{-T}^T\!\!\! \,\,\!\!\! a(\gamma_{x,\xi}(s),\dot{\gamma}_{x,\xi}(s)) \;ds \bigg| \leq C  \norm{\mathcal{A}_{a_2,k_2}-\mathcal{A}_{a_1,k_1}} _1,\,\forall(x,\xi)\in S\M.$$ This completes the proof of Theorem \ref{th0}.
\subsection{Uniqueness of the absorption coefficient modulo the gauge transformation}
This subsection is devoted to the proof of Corollary \ref{corol0}. We assume that
$\mathcal{A}_{a_2,k_2}=\mathcal{A}_{a_1,k_1}$. The Theorem \ref{th0} implies that \begin{equation}\label{int -TT=0} \displaystyle\int_{-T}^T\!\!\! \,\,\!\!\! a(\gamma_{x,\xi}(s),\dot{\gamma}_{x,\xi}(s)) \;ds =0,\,\forall(x,\xi)\in S\widetilde{\M}.\end{equation}
Consider the function $$v(x,\xi)=\displaystyle\int_{\tau_-(x,\xi)}^0\!\!\! \,\,\!\!\! a\bigg(\gamma_{x,\xi}(-s+\tau_-(x,\xi)),\dot{\gamma}_{x,\xi}(-s+
\tau_-(x,\xi))\bigg) \;ds,\; (x,\xi)\in S\M.$$
We have $v \in L^{\infty}(S\M)$ and verifies the equation \begin{equation*}\label{D v}
\D v(x,\xi)=a(x,\xi), \;\mbox{ in } S\M.
\end{equation*}
Indeed, we have
 \begin{align*}
\D v(x,\xi)&=\quad \p_r \bigg[\displaystyle \int_{\tau_-(\gamma_{x,\xi}(r))}^{0}a(\gamma_{x,\xi}(r-s+\tau_-
(x,\xi)),\dot{\gamma}_{x,\xi}(r-s+\tau_-(x,\xi))) \;ds \bigg]_{|r=0}\cr
&=\quad \p_r \bigg[ \displaystyle \int_{\tau_-(x,\xi)-r}^{0} a\big(\gamma_{x,\xi}(r-s+\tau_-(x,\xi)),\dot{\gamma}_{x,\xi}(r-s+\tau_
-(x,\xi)
)\big) \;ds \bigg]_{|r=0}\cr
&=\quad  a(x,\xi).
\end{align*}
Moreover, by definition of $\tau_-(x,\xi)$, we have $v(x,\xi)=0$ on $\p_-S\M$. Let us compute $v(x,\xi)$ on $\p_+S\M$. For $(x,\xi)\in \p_+S\M$, we set $$(x',\xi')=\phi_{\tau_-(x,\xi)}(x,\xi) \in \p_-S\M.$$ Then $\tau_-(x,\xi)=-\tau_+(x',\xi')$ and
\begin{align*}
 v(x,\xi)&=\quad \displaystyle \int_{\tau_-(x,\xi)}^{0}a\big(\gamma_{x,\xi}(-s+\tau_-
 (x,\xi)),
\dot{\gamma}_{x,\xi}(-s+\tau_-(x,\xi))\big) \;ds \cr
&=\quad \displaystyle \int_{-\tau_+(x',\xi')}^{0} a\big(\gamma_{x',\xi'}(-s),\dot{\gamma}_{x',\xi'}
(-s)\big) \;ds.
\end{align*}
By a variable change, we get
\begin{equation*}
  v(x,\xi)=\displaystyle \int^{\tau_+(x',\xi')}_{0} a\big(\gamma_{x',\xi'}(s),\dot{\gamma}_{x',\xi'}
\end{equation*}
Using (\ref{int -TT=0}) and the fact that $T > Diam_g ~\M$ and that $a$ vanishes outside $\M$, we obtain
\begin{align*} v(x,\xi)&=\quad - \displaystyle \int_{-\tau_+(x',\xi')}^{0} a\big(\gamma_{x',\xi'}(s),\dot{\gamma}_{x',\xi'}
(s)\big) \;ds.
\end{align*}
But for $(x',\xi')\in \p_-S\M$ we have $\gamma_{x',\xi'}(s)\notin \M$ for $s \leq 0$.
We infer that
$ v(x,\xi)=0$ on $\p_+ S\M$ and  we conclude by having
\begin{equation*}
\left\{\begin{array}{lll}
\D v(x,\xi)&=a(x,\xi) &\mbox{  on } S \M  \cr
v(x,\xi)&=0 &\mbox{  on }\p S\M.
\end{array}
\right.
\end{equation*}
 Take $v$  solution of that system and set $l(x,\xi)=e^{v(x,\xi)}$. Then $l>0$ and $l, \D l \in L^{\infty}(S\M)$ with  $l=1$ on $\p S\M$ and we have $$a_1(x,\xi)=a_2(x,\xi)+\D(\log l(x,\xi)).$$ This completes the proof of Corollary \ref{corol0}.
\subsection{Stable determination of an isotropic absorption coefficient}
Our aim here is to prove the Theorem \ref{th1} where we have assumed that $a(x)$ does not depend on the direction $\theta$. We will prove that the albedo operator determines stably the absorption coefficient when this last is isotropic.
Denote by $\mathcal{X}$ the geodesic ray transform  on the simple manifold $\M$ defined as the linear operator
\begin{equation*}\label{X domaine}
\mathcal{X}:\mathcal{C}^\infty(\M)\To \mathcal{C}^\infty(\p_-S\M),
\end{equation*}
with
\begin{equation*}\label{X expression}
\mathcal{X} f(x,\theta)=\int_0^{\tau_+(x,\theta)}f(\gamma_{x,\theta}(t))\, \dd t,\quad (x,\theta)\in\p_-S\M.
\end{equation*}
$\mathcal{X} f(x,\theta)$ is a smooth function on $\p_-S\M$ because the integration limit $\tau_+(x,\theta)$ is a smooth function on $S \M \setminus S(\p \M)$.
The geodesic ray transform on  $\M$ can be extended to a bounded operator
\begin{equation}\label{X extension}
\mathcal{X}:H^k(\M)\To H^k(\p_-S\M),
\end{equation}
for every integer $k\geq 0$, see Theorem 4.2.1 of (\cite{[Sh1]}).\\
The Theorem \ref{th0} will allow us to stably reconstruct the geodesic ray transform of the absorption coefficient $a(x)$ from the albedo operator $\mathcal{A}_{a,k}$.
\begin{lemma}\label{lem3.4}
For admissible pairs $(a_1,k_1)$ and $(a_2,k_2)$ satisfying the conditions (\ref{cond th1}), there exist $C>0,$ such that
\begin{equation*}
\norm{\X(a)}_{L^2(\p_-S\M)}\leq C \norm{\mathcal{A}_{a_2,k_2}-\mathcal{A}_{a_1,k_1}} _1.
\end{equation*}
Here $C$ depends only on $\M$, $T$ and $C_0$.
\end{lemma}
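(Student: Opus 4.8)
The plan is to read off this $L^2$ bound directly from the pointwise estimate of Theorem \ref{th0}, the only real content being the identification of the integral along a full geodesic chord with the geodesic ray transform. First I would check that the hypotheses transfer: since $a_1,a_2$ are isotropic, the matching condition (\ref{cond th1}) for $\abs{\alpha}\le 2$ contains in particular (\ref{cond th0}) for $\abs{\alpha}\le 1$, and the class $\mathscr{A}(C_0,\eta)$ sits inside $\mathscr{A}_0(C_0)$. Hence Theorem \ref{th0} applies to $a=a_1-a_2$ and, using that $a$ now depends only on $x$, gives
$$
\Big|\int_{-T}^{T} a(\gamma_{x,\theta}(s))\,\dd s\Big|\le C\,\norm{\mathcal{A}_{a_2,k_2}-\mathcal{A}_{a_1,k_1}}_1,\qquad (x,\theta)\in S\M,
$$
with $C=C(\M,T,C_0)$.

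The key step is to recognize this symmetric integral as $\X(a)$ on the incoming boundary. Fix $(x,\theta)\in\p_-S\M$, so that $\tau_-(x,\theta)=0$ and, by simplicity of the extended manifold, the maximal geodesic $\gamma_{x,\theta}$ lies in $\M$ exactly for $s\in[0,\tau_+(x,\theta)]$, leaving $\M$ transversally at $s=\tau_+(x,\theta)$ and never returning. Because $a$ has been extended by zero outside $\M$, the integrand $a(\gamma_{x,\theta}(s))$ vanishes for $s<0$ and for $s>\tau_+(x,\theta)$; and since $T>\mathrm{Diam}_\g(\M)\ge\tau_+(x,\theta)$ the entire integral is already captured on $[0,\tau_+(x,\theta)]$. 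Consequently
$$
\int_{-T}^{T}a(\gamma_{x,\theta}(s))\,\dd s=\int_0^{\tau_+(x,\theta)}a(\gamma_{x,\theta}(s))\,\dd s=\X(a)(x,\theta),
$$
so the estimate above upgrades to $\abs{\X(a)(x,\theta)}\le C\,\norm{\mathcal{A}_{a_2,k_2}-\mathcal{A}_{a_1,k_1}}_1$ uniformly in $(x,\theta)\in\p_-S\M$.

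Finally I would pass from this uniform bound to the $L^2$ bound by integrating over $\p_-S\M$. Since the right-hand side is independent of $(x,\theta)$ and $\p_-S\M$ is a compact submanifold, hence of finite measure with respect to $\dss$, squaring and integrating yields
$$
\norm{\X(a)}_{L^2(\p_-S\M)}\le C\,\abs{\p_-S\M}^{1/2}\,\norm{\mathcal{A}_{a_2,k_2}-\mathcal{A}_{a_1,k_1}}_1,
$$
which is the asserted inequality after absorbing $\abs{\p_-S\M}^{1/2}$ into $C$, leaving a constant depending only on $\M,T,C_0$. I do not anticipate a genuine obstacle here; the one point demanding care is the bookkeeping in the middle paragraph, namely verifying that the extension of $a$ by zero together with $T>\mathrm{Diam}_\g(\M)$ really collapses the integral over $[-T,T]$ onto $[0,\tau_+(x,\theta)]$. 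Once that identification is in hand, the $L^\infty$-to-$L^2$ passage is immediate from compactness of $\p_-S\M$.
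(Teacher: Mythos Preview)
Your proposal is correct and follows essentially the same route as the paper: identify the $[-T,T]$ integral from Theorem \ref{th0} with $\X(a)$ using that $a$ vanishes outside $\M$ and $T>\mathrm{Diam}_\g(\M)$, then integrate the resulting pointwise bound over the compact submanifold $\p_-S\M$. The only cosmetic difference is that the paper first shifts from the boundary point $(x',\xi')\in\p_-S\M$ to an interior point along the geodesic before invoking Theorem \ref{th0}, whereas you apply it directly at the boundary point; both are valid since Theorem \ref{th0} is stated for all $(x,\theta)\in S\M$.
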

\begin{proof}
Take $(x',\xi')\in \p_-S\M$ and $s_0 > 0$ such that $(x,\xi)=(\gamma_{x',\xi'}(s_0), \dot{\gamma}_{x',\xi'}(s_0))\in S\M$. We have $$ \mathcal{X}(a)(x',\xi')=\int_{0}^{\tau_+(x',\xi')}\!\!\! \,\,\!\!\! a(\gamma_{x',\xi'}(s)) \;ds=\int_{-s_0}^{\tau_+(x',\xi')-s_0}\!\!\! \,\,\!\!\! a(\gamma_{x,\xi}(s)) \;ds.$$ Since we have $\textrm{supp}(a) \subset \M$ and $T > Diam_{\g}\M +2 \epsilon$ (condition (\ref{condition T})), we get
\begin{equation*}\label{X -T T} \X(a)(x',\xi')=\int_{-T}^{T}\!\!\! \,\,\!\!\! a(\gamma_{x,\xi}(s)) \;ds.
\end{equation*}
Integrating  over $\p_-S\M$ and applying  Theorem \ref{th0} to $a(x)$ instead of $a(x,\theta)$, we obtain \begin{equation}\label{ray a albedo}
 \norm{\X(a)}_{L^2(\p_-S\M)}\leq C \norm{\mathcal{A}_{a_2,k_2}-\mathcal{A}_{a_1,k_1}} _1.
\end{equation}
\end{proof}
Now, to recover the absorption coefficient from its geodesic ray transform, we need the following well-known theorem which proof can be found in (\cite{[Sh1]} Th 4.3.3 p.119 ),  or in (\cite{[BZ]} Th 2.1):\\
For every simple compact Riemannian manifold $(\M,\g)$ with smooth boundary, if we assume $k^+(\M,\g)<1$, then there exists $C>0$ such that the following stability estimate
\begin{equation}\label{ray inverse}
 \Vert f \Vert _{L^2(\M)}\leq C \Vert \mathcal{X} f \Vert_{H^1(\p_-S\M)},
\end{equation}
holds for any $f\in H^1(\M)$.
Applying this Theorem, we get
\begin{equation}\label{estimate geodesical ray albedo}
\|a\|_{L^2(\M)}\leq C \|\X(a)\|_{H^1(\p_-S\M)}.
\end{equation}
Since we take $a_j\in \mathscr{A}(C_0,\eta)$ (\ref{A}), then $a\in H^2(\M)$. By  interpolation inequality, we get
\begin{align*}
\|\X(a)\|_{H^1(\p_-S\M)} & \leq C \|\X(a)\|^{\frac{1}{2}}_{L^2(\p_-S\M)}\|\X(a)\|^
{\frac{1}{2}}_{H^2(\p_-S\M)}.
\end{align*}
 Moreover, the operator $\X$ is bounded (\ref{X extension}), so we arrive to
$$\|a\|_{L^2(\M)}  \leq C \|\X(a)\|^{\frac{1}{2}}_{L^2(\p_-S\M)}\|a\|^
{\frac{1}{2}}_{H^2(\M)} \leq C\|\X(a)\|^{\frac{1}{2}}_{L^2(\p_-S\M)}.
$$
 Using the estimate (\ref{ray a albedo}), we obtain
 \begin{equation*}
   \|a\|_{L^2(\M)}  \leq C \norm{\mathcal{A}_{a_2,k_2}-\mathcal{A}_{a_1,k_1}}^{\frac{1}{2}} _1,
 \end{equation*}
where the constant $C$ depends on $\M$, $T$ and $C_0$.
Take $\eta' \in (1+\frac{n}{2},\eta)$. By Sobolev embedding and interpolation inequality, there exists $\delta\in(0,1)$ such that \begin{equation*}
\|a\|_{C^0(\M)}\leq C \|a\|_{H^{\eta'}(\M)}\leq C \|a\|^\delta_{L^2(\M)} \|a\|^{1-\delta}_{H^{\eta}(\M)}.
\end{equation*}
This yields to the stability estimate
\begin{equation}\label{norm a c^0}
\|a\|_{C^0(\M)}\leq C \norm{\mathcal{A}_{a_2,k_2}-\mathcal{A}_{a_1,k_1}}_1^{\frac{\delta}{2}},
\, \delta\in (0,1).\end{equation} Thus the proof of the Theorem \ref{th1} is complete and we derive obviously \ref{corol}.
\section{A uniqueness result for the scattering coefficient}
\setcounter{equation}{0}
In this section, we prove Theorem \ref{th2}, an identification result recovering the spatial part of the scattering coefficient $k$ from the Albedo operator. We consider $a_j,\, j=1,2$, depending on both the position $x$ and the direction $\theta$.
We choose $\rho_j\in C_0^{\infty}(\widetilde{\M}\setminus \M)$, for $j=1,2$ and $\Psi_h,\; \Psi_{h'}$ as defined in (\ref{psi h}), for $h,h'\in (0,1)$.  Pick up $x_0 \in \widetilde{\M}\setminus \M$ and $\xi \in \mathbb{S}^{n-1}$. We let $x\in \widetilde{\M}$ and we denote by  $\xi_x=\mathcal{P}(\xi;x_0 ,x)$.
We define the functions $ \phi_1$ and $ \phi_2$ by
\begin{equation}\label{rhoj phi h h'}
   \phi_1(x,\theta)=\rho_1(x)\Psi_h(\xi_x,\theta) \mbox{ and  } \phi_2(x,\theta)=\rho_2(x)\Psi_{h'}(\xi_x,\theta).
   \end{equation}
 Let $p,p' \geq 1$  such that $\frac{1}{p}+\frac{1}{p'}=1$. Applying Lemma \ref{uj estimate rj phij}, we find geometric optics solutions $u_1 \in C^1([0,T];L^p(S\M))\cap  C^0([0,T];\mathcal{W}^p)$  and $u_2 \in C^1([0,T];L^{p'}(S\M))\cap  C^0([0,T];\mathcal{W}^{p'})$ to the respective Boltzmann equations (\ref{u1 u(0)}) and (\ref{syst u2}), given by \begin{equation*}
  u_1(t,x,\theta)=(\alpha_1 \beta_{a_1})(t,x,\theta) e^{i\lambda(t+\tau_-(x,\theta))}+r^1_{h,\lambda} (t,x,\theta),
  \end{equation*}
and \begin{equation*}
  u_2(t,x,\theta)=(\alpha_2 \beta_{-a_2})(t,x,\theta) e^{-i\lambda(t+\tau_-(x,\theta))}+r^2_{h',\lambda} (t,x,\theta),
\end{equation*}
where $r^1_{h,\lambda}$ and $r^2_{h',\lambda}$ satisfy respectively the initial and final BVPs (\ref{r1 lamda}) for $a=a_1$ and (\ref{r2 lamda}) for $a=a_2$ and the conditions (\ref{norm r1 L2}) and (\ref{syst r2}).
We assume that $$\mathcal{A}_{a_2,k_2}=\mathcal{A}_{a_1,k_1}.$$ Upon using the corollary \ref{corol0}, there exist a positive function $l, \D l \in L^{\infty}(S\M)$ with  $l=1$ on $\p S\M$ such that $$a_1(x,\xi)=a_2(x,\xi)+\D(\log l(x,\xi)).$$ Set $$a'_2(x,\xi)=a_2(x,\xi)+\D(\log l(x,\xi)),\mbox{  and  }\; k'_2(x, \theta,\theta') = k_2(x,\theta,\theta')\frac{l(x,\theta)}{l(x,\theta')},$$
and $$a'=a_1-a'_2,\;k'=k_1-k'_2 ,$$ then we have $\mathcal{A}_{a'_2,k'_2}=\mathcal{A}_{a_2,k_2}$. Hence  $$\mathcal{A}_{a'_2,k'_2}-\mathcal{A}_{a_1,k_1}=0\,\mbox{ and } a'=0.$$
Substituting  $a_2$ by $a'_2$ and  $a$ by $a'$ in the identity (\ref{equation integr1}), we get
\begin{equation}\label{identity 4.2}
\int_0^T\!\!\!\int_{S\M} u_2 \I_{k'}[ u_1](t,x,\theta) \, \dvv \,dt = 0.
\end{equation}
Replacing $u_1$ and $u_2$ by their expressions, we obtain that
\begin{equation}\label{J}
\int_0^T\!\!\!\int_{S\M} u_2 \I_{k'}[ u_1](t,x,\theta) \, \dvv \,dt= J_{h,h',\lambda}(\xi)+ J^1_{h,h',\lambda}(\xi)+  J^2_{h,h',\lambda}(\xi)+J^3_{h,h',\lambda}(\xi),
\end{equation}
where $$
J_{h,h',\lambda}(\xi):=\int_0^T\!\!\!\int_{S\M} \alpha_2\beta_{-a'_2}\psi_{-\lambda} \I_{k'}[ \alpha_1\beta_{a_1}\psi_{\lambda}](t,x,\theta) \, \dvv \,dt ,
$$
  $$ J^1_{h,h',\lambda}(\xi):=\int_0^T\!\!\!\int_{S\M} \alpha_2\beta_{-a'_2}\psi_{-\lambda} \I_{k'}[ r^1_{h,\lambda}](t,x,\theta) \, \dvv \,dt,
   $$
   $$ J^2_{h,h',\lambda}(\xi):= \int_0^T\!\!\!\int_{S\M} r^2_{h',\lambda} \I_{k'}[ \alpha_1\beta_{a_1}\psi_{\lambda}](t,x,\theta) \, \dvv \,dt,
     $$
      $$ J^3_{h,h',\lambda}(\xi):=  \int_0^T\!\!\!\int_{S\M} r^2_{h',\lambda} \I_{k'}[ r^1_{h,\lambda}](t,x,\theta) \, \dvv \,dt.$$
We aim to take the limit of these terms when $h,h' \rightarrow 1$ and $\lambda \rightarrow \infty$.
We have a preliminary result.
\begin{lemma}\label{lem4.1}
  For any $\xi \in \s^{n-1}$ and any $\rho_j\in C_0^{\infty}(\widetilde{\M}\setminus \M)$,  $j=1,2$, we have
   \begin{equation*}
 \displaystyle \lim_{h,h' \rightarrow 1} J_{h,h',\lambda}(\xi)= \int_0^T\!\!\!\int_{\widetilde{\M} \setminus \M}\!\!\! (\rho_1 \rho_2)(x) k(\gamma_{x,\xi}(t),\dot{\gamma}
_{x,\xi}(t),\dot{\gamma}
_{x,\xi}(t)) \, \dv\,dt.\end{equation*}
\end{lemma}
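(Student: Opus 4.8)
The plan is to expand $J_{h,h',\lambda}(\xi)$ fully, pass to the limit $h,h'\to 1$ by exploiting that $\Psi_h$ and $\Psi_{h'}$ are approximate identities on the fibre spheres, and read off the diagonal value of the scattering kernel. Writing out $\I_{k'}$ together with the factors $\alpha_j$, $\beta$, $\psi_{\pm\lambda}$ gives
\[
J_{h,h',\lambda}(\xi)=\int_0^T\!\!\int_{S\M}\int_{S_x\M}(\alpha_2\beta_{-a'_2})(t,x,\theta)\,k'(x,\theta,\theta')\,(\alpha_1\beta_{a_1})(t,x,\theta')\,e^{i\lambda(\tau_-(x,\theta')-\tau_-(x,\theta))}\,\dd\omega_x(\theta')\,\dvv\,dt .
\]
The first observation is that the two temporal phases coming from $\psi_{-\lambda}$ and $\psi_{\lambda}$ cancel, so only the directional phase $e^{i\lambda(\tau_-(x,\theta')-\tau_-(x,\theta))}$ survives; once $\theta$ and $\theta'$ are forced to coincide this factor becomes $1$, which is exactly why the limit is independent of $\lambda$. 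Since $k'$ is supported where the collision point lies in $\M$, the $x$-integration may be regarded as taken over $S\widetilde{\M}$.

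For the outer pair $(x,\theta)$ I substitute $\alpha_2(t,x,\theta)=\rho_2(\gamma_{x,\theta}(-t))\Psi_{h'}(\xi_{\gamma_{x,\theta}(-t)},\dot{\gamma}_{x,\theta}(-t))$ and change variables along the geodesic flow, $(x,\theta)\mapsto(y,\omega)=\phi_{-t}(x,\theta)$. Because $\dvv$ is preserved by the flow (Liouville), this turns $\alpha_2$ into $\rho_2(y)\Psi_{h'}(\xi_y,\omega)$, in which $\omega$ is now a genuine unit-sphere variable based at the \emph{fixed} point $y$. The Poisson formula (\ref{prop4}) then applies directly in $\omega$: as $h'\to1$ the $\omega$-integral localizes at $\omega=\xi_y$, while $\mathrm{supp}\,\rho_2\subset\widetilde{\M}\setminus\M$ forces $y\in\widetilde{\M}\setminus\M$. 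After this step the collision point is $\gamma_{y,\xi_y}(t)$ and the outgoing direction is $\dot{\gamma}_{y,\xi_y}(t)$.

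The main obstacle is the inner $\theta'$-integral, which carries the kernel $\Psi_h(\xi_{\gamma_{x,\theta'}(-t)},\dot{\gamma}_{x,\theta'}(-t))$ whose \emph{centre} $\xi_{\gamma_{x,\theta'}(-t)}$ depends on the integration variable $\theta'$; because of this moving centre the Poisson formula cannot be invoked verbatim. I would resolve this by changing the inner variable to the landing point $w=\gamma_{x,\theta'}(-t)$ of the backward geodesic, which by simplicity of $(\widetilde{\M},\g)$ is a diffeomorphism from $S_x\widetilde{\M}$ onto the geodesic sphere of radius $t$ about $x$. As $h\to1$ the kernel then concentrates on the unique $\theta'$ for which $\dot{\gamma}_{x,\theta'}(-t)=\xi_{\gamma_{x,\theta'}(-t)}$, and this is precisely the equation characterizing the outgoing direction, so $\theta'\to\dot{\gamma}_{y,\xi_y}(t)=\theta$. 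Both directional arguments of $k'$ thus converge to the same geodesic direction, and by continuity of $k'=\sigma\chi$ one gets $k'(x,\theta,\theta')\to k'(x,\theta,\theta)$, the diagonal. The delicate points are that the Jacobian of this change of variables reduces the mass of the concentrating kernel to $1$ at the localization point (so that no spurious factor survives) and that the interchange of limit and integral is legitimate; both are handled by dominated convergence, using the uniform bound (\ref{prop1}) and the normalization (\ref{prop2}).

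Finally I collect the two limits. On the diagonal $\theta'=\theta$ the surviving amplitude is $\rho_1(y)\rho_2(y)$ times $\beta_{-a'_2}(t,z,\theta)\beta_{a_1}(t,z,\theta)$ with $z=\gamma_{y,\xi_y}(t)$; since the reduction preceding the lemma gives $a'=a_1-a'_2\equiv 0$, i.e. $a_1=a'_2$, this product of exponentials equals $1$, and the phase equals $1$ as already noted. What remains is
\[
\lim_{h,h'\to 1}J_{h,h',\lambda}(\xi)=\int_0^T\!\!\int_{\widetilde{\M}\setminus\M}(\rho_1\rho_2)(y)\,k'\big(\gamma_{y,\xi_y}(t),\dot{\gamma}_{y,\xi_y}(t),\dot{\gamma}_{y,\xi_y}(t)\big)\,\dv\,dt ,
\]
which, after renaming $y$ as $x$ and identifying $\xi_x=\mathcal{P}(\xi;x_0,x)$ with $\xi$ in the stated notation (the kernel here being $k'=k_1-k'_2$, written $k$ in the statement), is the claimed identity. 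I expect the Jacobian and mass bookkeeping for the moving-centre inner kernel to be the only genuinely technical step, the rest being the flow change of variables and two applications of the approximate-identity property.
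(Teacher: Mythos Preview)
Your overall strategy is the same as the paper's: localize each Poisson kernel on its fibre sphere, observe that on the diagonal the phase and the $\beta$-factors cancel because $a_1=a'_2$, and read off $k'(\cdot,\theta,\theta)$. The order in which you take the limits is reversed (you do $h'\to 1$ first, the paper does $h\to 1$ first), but that in itself is harmless.

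The substantive difference, and the place where your argument has a gap, is the treatment of the second kernel. After your flow change $(x,\theta)\mapsto(y,\omega)=\phi_{-t}(x,\theta)$ and the limit $h'\to 1$, the collision point $x=\gamma_{y,\xi_y}(t)$ is fixed for each $(y,t)$, and the inner integral is over $\theta'\in S_x\M$ with the kernel $\Psi_h(\xi_{\gamma_{x,\theta'}(-t)},\dot\gamma_{x,\theta'}(-t))$ whose centre genuinely moves with $\theta'$ in the curved case. Your proposed fix, changing $\theta'$ to $w=\gamma_{x,\theta'}(-t)$ on a geodesic sphere, no longer gives an integral over a unit fibre sphere, so the Poisson formula does not apply; and the limiting mass of this moving-centre kernel over the geodesic sphere is governed by the Jacobian of $w\mapsto(\xi_w,\eta(w))$ at the fixed point, which you do not compute and which need not be $1$. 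So the step you flag as ``the only genuinely technical step'' is not merely technical: as written it could produce a spurious factor.

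The paper sidesteps this completely, and it is worth seeing how. Instead of freezing the position and then struggling with a sphere-only change, the paper always moves the base point $x$ \emph{together with} whichever direction is being localized, via the geodesic flow on $S\M$; by Liouville the Jacobian is $1$, and after the change the centre of the relevant Poisson kernel depends only on the (new) base point and not on the sphere variable, so the standard Poisson formula applies. The remaining direction variable is simply parallel-transported along, which preserves $\dd\omega$. Concretely: first change $(x,\theta')\mapsto(y,\widetilde{\theta'})=\phi_{-t}(x,\theta')$ with $\theta\mapsto\theta_1=\mathcal P(\theta;x,y)$, take $h\to1$; then undo this in the $\xi$-direction and perform the symmetric change $(z,\theta_2)\mapsto(\widetilde z,\widetilde\theta_2)=\phi_{-t}(z,\theta_2)$ to make $\Psi_{h'}$ standard, and take $h'\to1$. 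This device works for both limits and never produces a Jacobian to track. If you want to keep your order $h'\to1$ first, the clean remedy is the same trick applied symmetrically: before taking any limit, change $(x,\theta')$ (not $\theta'$ alone) via $\phi_{-t}$ and parallel-transport $\theta$; this removes the moving-centre problem entirely.
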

\begin{proof}
We first note that using (\ref{prop1}) and (\ref{prop2}), we have
\begin{equation}\label{domine}
\big| \alpha_2\beta_{-a'_2}\psi_{-\lambda} \I_{k'}[ \alpha_1\beta_{a_1}\psi_{\lambda}](t,x,\theta)  \big| \leq C(1-h')^{1-n},
\end{equation} where $C$ depends on $n,T,C_0,C_1$ and $C_2$. We aim to use the Lebesgue's Theorem in order to take the limit of $J_{h,h',\lambda}$ when $h\rightarrow1$. Recall that we have fixed $x_0\in \widetilde{\M}\setminus \M$ and $\xi \in \mathbb{S}^{n-1}$ and we have denoted $\xi_x=\mathcal{P}(\xi;x_0,x)$. Since $k'(x,\theta,\theta')$ vanishes for $x$ outside $\M$, we have
 \begin{align*}
J_{h,h',\lambda}(\xi)= & \int_0^T\!\!\!\int_{\widetilde{\M}}\!\!\!\int_{S_x\widetilde{\M}} \rho_2(\gamma_{x,\theta}(-t))\Psi_{h'}(\xi_{\gamma_{x,\theta}(-t)},
\dot{\gamma}_{x,\theta}(-t))
(\beta_{-a'_2}\psi_{-\lambda})(t,x,\theta)  \!\!\!\int_{S_x\widetilde{\M}}k'(x,\theta,\theta')\cr &
\rho_1(\gamma_{x,\theta'}(-t)) \Psi_{h}(\xi_{\gamma_{x,\theta'}(-t)},\dot{\gamma}_{x,\theta'}(-t)) (\beta_{a_1}\psi_{\lambda})(t,x,\theta')  \dd\omega_x(\theta') \, \dd\omega_x(\theta) \, \dv(x)\,dt .\end{align*}
We perform the change of variables from $(-t,x,\theta')$ to $(y,\widetilde{\theta'})$ where $$y=\gamma_{x,\theta'}(-t),\;\widetilde{\theta'}= \dot{\gamma}_{x,\theta'}(-t)\; \mbox{ and } \theta_1=\mathcal{P}(\theta;x,y).$$ So $x=\gamma_{y,\widetilde{\theta'}}(t)$,  $\theta'=\dot{\gamma}_{y,\widetilde{\theta'}}(t)$ and $\theta=\mathcal{P}(\theta_1;y,\gamma_{y,\widetilde{\theta'}}(t))$.
We set $\xi_1=\mathcal{P}(\xi_x;x,y)$ and $\widetilde{\xi}_1=\mathcal{P}(\xi_x;x,\gamma_{x,\theta}(-t))$.
 Thus,
 \begin{align*}
J_{h,h',\lambda}(\xi)= & \int_0^T\!\!\!\int_{\widetilde{\M}}\!\!\!\int_{S_y\widetilde{\M}} \!\!\!\int_{S_y\widetilde{\M}} \rho_2(\gamma_{\gamma_{y,\widetilde{\theta'}}(t),\mathcal{P}
(\theta_1;y,\gamma_{y,\widetilde{\theta'}}(t))}(-t))
\Psi_{h'}(\widetilde{\xi}_1,\dot{\gamma}_{\gamma_{y,\widetilde{\theta'}}(t),
\mathcal{P}
(\theta_1;y,\gamma_{y,\widetilde{\theta'}}(t))}(-t))
 \cr & (\beta_{-a'_2}\psi_{-\lambda})(t,\gamma_{y,\widetilde{\theta'}}(t),
 \mathcal{P}(\theta_1;y,\gamma_{y,\widetilde{\theta'}}(t))) k'\big(\gamma_{y,\widetilde{\theta'}}(t),\mathcal{P}(\theta_1;y,\gamma_
{y,\widetilde{\theta'}}(t)),
\dot{\gamma}_{y,\widetilde
{\theta'}}(t)\big)\cr &   \rho_1(y)\Psi_{h}(\xi_1,\widetilde{\theta'})  (\beta_{a_1} \psi_{\lambda})(t,\gamma_{y,\widetilde{\theta'}}(t),\dot{\gamma}
_{y,\widetilde{\theta'}}(t))  \dd\omega_y(\tilde{\theta'}) \, \dd\omega_y(\theta_1) \, \dv(y)\,dt .\end{align*}
 Thanks to (\ref{domine}), the Lebesgue's theorem and the Poisson formula  (\ref{prop4}) lead to
 \begin{align}\label{limh1}
\displaystyle \lim_{h \rightarrow 1} J_{h,h',\lambda}(\xi)=  & \int_0^T\!\!\!\int_{\widetilde{\M}}\!\!\!\int_{S_y\widetilde{\M}} \!\!\! \rho_2(\gamma_{\gamma_{y,\xi_1}(t),
\mathcal{P}(\theta_1;y,\gamma_{y,\xi_1}(t))}(-t))
\Psi_{h'}(\widetilde{\xi}_2,\dot{\gamma}_{\gamma_{y,\xi_1}(t),\mathcal{P}
(\theta_1;y,\gamma_{y,\xi_1}(t))}(-t)) \cr &
(\beta_{-a'_2}\psi_{-\lambda})(t,\gamma_{y,\xi_1}(t),\mathcal{P}(\theta_1;y,\gamma
_{y,\xi_1}(t))) k'(\gamma_{y,\xi_1}(t),\mathcal{P}(\theta_1;y,\gamma
_{y,\xi_1}(t)),\dot{\gamma}_{y,\xi_1}(t))  \cr &  \rho_1(y)(\beta_{a_1}
\psi_{\lambda})(t,\gamma_{y,\xi_1}(t),\dot{\gamma}_{y,\xi_1}(t))  \, \dd\omega_y(\theta_1) \, \dv(y)\,dt .\end{align}
Here, $\widetilde{\xi}_2=\mathcal{P}(\dot{\gamma}_{y,\xi_1}(t);
\gamma_{y,\xi_1}(t), \gamma_{\gamma_{y,\xi_1}(t),
\mathcal{P}(\theta_1;y,\gamma_{y,\xi_1}(t))}(-t))$.
To simplify the expression, we make another  variables change by setting $$(z,\xi_2)=(\gamma_{y,\xi_1}(t),\dot{\gamma}_{y,\xi_1}(t))\;\mbox{ and }\; \theta_2=\mathcal{P}(\theta_1;y,z).$$
So, we have $y=\gamma_{z,\xi_2}(-t)$, $\xi_1=\dot{\gamma}_{z,\xi_2}(-t)$ and $\theta_1=\mathcal{P}(\theta_2;z,\gamma_{z,\xi_2}(-t))$. Then, $\widetilde{\xi}_2=\mathcal{P}(\xi_2;z,\gamma_{z,
\theta_2}(-t))$
Therefore, (\ref{limh1}) reads
 \begin{align*}
\displaystyle \lim_{h \rightarrow 1} J_{h,h',\lambda}(\xi)=  & \int_0^T\!\!\!\int_{\widetilde{\M}}\!\!\!\int_{S_z\widetilde{\M}} \!\!\! \rho_2(\gamma_{z,
\theta_2}(-t))\Psi_{h'}(\widetilde{\xi}_2,\dot{\gamma}_{z,\theta_2}(-t))
(\beta_{-a'_2}\psi_{-\lambda})(t,z,\theta_2)  \cr & k'(z,\theta_2,\xi_2) \rho_1(\gamma_{z,\xi_2}(-t))(\beta_{a_1}
\psi_{\lambda})(t,z,\xi_2)  \, \dd\omega_z(\theta_2) \, \dv(z)\,dt .\end{align*}
By changing $(z,\theta_2)$ into $$(\widetilde{z},\widetilde{\theta}_2)=(\gamma_{z,
\theta_2}(-t),\dot{\gamma}_{z,\theta_2}(-t)),$$ we get $\widetilde{\xi}_2=\mathcal{P}(\xi_2;z,\widetilde{z})$ and then
 \begin{align*}
\displaystyle \lim_{h \rightarrow 1} J_{h,h',\lambda}(\xi)= & \int_0^T\!\!\!\int_{\widetilde{\M}}\!\!\!\int_{S_{\widetilde{z}}\widetilde{\M}} \!\!\! \rho_2(\widetilde{z})\Psi_{h'}(\widetilde{\xi}_2,\widetilde{\theta}_2)
(\beta_{-a'_2}\psi_{-\lambda})(t,\gamma_{\widetilde{z},
\widetilde{\theta}_2}(t),
\dot{\gamma}_{\widetilde{z},
\widetilde{\theta}_2}(t))  \cr  & k'(\gamma_{\widetilde{z},\widetilde{\theta}_2}(t),\dot{\gamma}
_{\widetilde{z},\widetilde{\theta}_2}(t),\mathcal{P}(\widetilde{\xi}_2;
\widetilde{z},\gamma_{\widetilde{z},\widetilde{\theta}_2}(t)))
 \rho_1(\gamma_{\gamma_{\widetilde{z},\widetilde{\theta}_2}(t),
\mathcal{P}(\widetilde{\xi}_2;\widetilde{z},\gamma_{\widetilde{z},
\widetilde{\theta}_2}(t))}(-t))\cr &
 (\beta_{a_1}
\psi_{\lambda})(t,\gamma_{\widetilde{z},
\widetilde{\theta}_2}(t),\mathcal{P}(\widetilde{\xi}_2;
\widetilde{z},\gamma_{\widetilde{z},\widetilde{\theta}_2}(t)))  \, \dd\omega_{\widetilde{z}}(\widetilde{\theta}_2) \, \dv(\widetilde{z})\,dt .\end{align*}
Using (\ref{prop4}) and the Lebesgue's Theorem, we obtain
  \begin{align*}
\displaystyle \lim_{h' \rightarrow 1} \displaystyle \lim_{h \rightarrow 1} J_{h,h',\lambda}(\xi)= & \int_0^T\!\!\!\int_{\widetilde{\M}}\!\!\! \rho_2(\widetilde{z})(\beta_{-a'_2}\psi_{-\lambda})(t,\gamma_
{\widetilde{z},\widetilde{\xi}_2}(t),
\dot{\gamma}_{\widetilde{z},\widetilde{\xi}_2}(t)) 
 \rho_1(\widetilde{z})\cr  & (\beta_{a_1}
\psi_{\lambda})
(t,\gamma_{\widetilde{z},\widetilde{\xi}_2}(t),
\dot{\gamma}_{\widetilde{z},\widetilde{\xi}_2}(t)) k'(\gamma_{\widetilde{z},
\widetilde{\xi}_2}(t),\dot{\gamma}
_{\widetilde{z},\widetilde{\xi}_2}(t),\dot{\gamma}
_{\widetilde{z},\widetilde{\xi}_2}(t)) \,\dv(\widetilde{z})\,dt
\cr = & \int_0^T\!\!\!\int_{\widetilde{\M} \setminus \M}\!\!\! (\rho_1\rho_2)(\widetilde{z})
k'(\gamma_{\widetilde{z},
\widetilde{\xi}_2}(t),\dot{\gamma}
_{\widetilde{z},\widetilde{\xi}_2}(t),\dot{\gamma}
_{\widetilde{z},\widetilde{\xi}_2}(t)) \, \dv\,dt.
\end{align*} In the last equality we have used the fact that $\mathrm{supp}(\rho_1\rho_2) \subset \widetilde{\M}\setminus \M$ and that $a'_2=a_1$. Note that $\xi$ is arbitrary in $\mathbb{S}^{n-1}$ and that $\widetilde{\xi}_2$ is obtained from $\xi$ by parallel transport. Then relabeling $\widetilde{z}$ as $x$ and $\widetilde{\xi}_2$ as $\xi$, we end up by getting
 $$\displaystyle \lim_{h' \rightarrow 1} \displaystyle \lim_{h \rightarrow 1} J_{h,h',\lambda}(\xi)=\int_0^T\!\!\!\int_{\widetilde{\M} \setminus \M}\!\!\! (\rho_1 \rho_2)(x)k'(\gamma_{x,\xi}(t),
 \dot{\gamma}
_{x,\xi}(t),\dot{\gamma}
_{x,\xi}(t)) \, \dv\,dt.$$ Notice that for $\theta=\theta'$, we have  $k'_2(x,\theta,\theta)=k_2(x,\theta,\theta)$, this completes the proof of the Lemma.
\end{proof}
Now, to compute the limit as $h',h \rightarrow 1$ of $J^j_{h,h',\lambda},\;j\in\{1,2,3\}$, defined in (\ref{J}), we need the following Lemma.
\begin{lemma}\label{lem4.2}
  Let $x_0 \in \widetilde{\M} \setminus \M,\; \xi \in \mathbb{S}^{n-1}$ and $r^1_{h,\lambda}$ the unique solution of the IVBP (\ref{r1 lamda}) by taking $a=a_1$ and $k=k_1$. We denote by $R^1_{\lambda,\xi}$ the unique solution of
  \begin{equation*}
 \left\{
  \begin{array}{lll}
  \partial_t R^1_{\lambda,\xi}+\D R^1_{\lambda,\xi}+a_1(x)R^1_{\lambda,\xi}=
\I_{k_1}[R^1_{\lambda,\xi}](t,x,\theta)+k_1(x,\theta,\xi_x)\rho_1(\gamma_{x,\xi_x}(-t))( \beta_{a_1} \psi_{\lambda})(t,x,\xi_x)  & \textrm{in }\; Q_T,\cr
R^1_{\lambda,\xi}(0, x,\theta )=0 & \textrm{in }\; S\M ,\cr
R^1_{\lambda,\xi}(t,x,\theta)=0 & \textrm{on } \; \Sigma_-.
\end{array}
\right.
\end{equation*}
Then, we have $$\displaystyle \lim_{h \rightarrow 1} \|r^1_{h,\lambda} - R^1_{\lambda,\xi} \|_{L^2(Q_T)} =0.$$
Therefore,  $$\displaystyle \lim_{h \rightarrow 1} \|\I_{k_1}[r^1_{h,\lambda}]-\I_{k_1}[R^1_{\lambda,\xi}]\|_{L^2(Q_T)} =0.$$
\end{lemma}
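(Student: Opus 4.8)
The plan is to exploit that $r^1_{h,\lambda}$ and $R^1_{\lambda,\xi}$ solve the \emph{same} type of inhomogeneous problem (\ref{IBVPsource}) (with $a=a_1$, $k=k_1$), differing only in their source terms, and then to invoke the quantitative well-posedness of Lemma \ref{lemma2}. Writing $q_h:=\I_{k_1}[\alpha_1\beta_{a_1}\psi_\lambda]$ for the source of (\ref{r1 lamda}) and $q$ for the source defining $R^1_{\lambda,\xi}$, namely
$$
q(t,x,\theta)=k_1(x,\theta,\xi_x)\,\rho_1(\gamma_{x,\xi_x}(-t))\,(\beta_{a_1}\psi_\lambda)(t,x,\xi_x),
$$
linearity shows that $w_h:=r^1_{h,\lambda}-R^1_{\lambda,\xi}$ solves (\ref{IBVPsource}) with source $q_h-q$ and homogeneous initial/boundary data. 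The estimate (\ref{estimate source}) then yields $\norm{w_h}_{L^2(Q_T)}\le C\,\norm{q_h-q}_{L^2(Q_T)}$, so the whole matter reduces to proving $\norm{q_h-q}_{L^2(Q_T)}\to 0$ as $h\to1$.

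Next I would establish this source convergence. Expanding $\alpha_1$ via (\ref{phi1 phi2}) and (\ref{rhoj phi h h'}) gives
$$
q_h(t,x,\theta)=\int_{S_x\M}k_1(x,\theta,\theta')\,\rho_1(\gamma_{x,\theta'}(-t))\,\Psi_h\big(\xi_{\gamma_{x,\theta'}(-t)},\dot\gamma_{x,\theta'}(-t)\big)\,(\beta_{a_1}\psi_\lambda)(t,x,\theta')\,\dd\omega_x(\theta'),
$$
so $q_h$ is the Poisson-type average, in the direction variable $\theta'$, of the continuous integrand obtained by deleting the factor $\Psi_h$. The aim is to show that as $h\to1$ this average concentrates at $\theta'=\xi_x$ and produces exactly $q$. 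For this I would use that along a geodesic the velocity is parallel, $\dot\gamma_{x,\theta'}(-t)=\mathcal P(\theta';x,\gamma_{x,\theta'}(-t))$, together with the isometry invariance of $\Psi_h$, to rewrite the kernel with the integration variable in the standard slot; performing the same base-point change of variables as in the proof of Lemma \ref{lem4.1} and applying the Poisson formula (\ref{prop4}) (licit because $k_1=\sigma_1\chi$, $\rho_1$ and $\beta_{a_1}$ are continuous) gives the pointwise limit $q_h(t,x,\theta)\to q(t,x,\theta)$ for a.e. $(t,x,\theta)$. For the passage to $L^2(Q_T)$ one dominates: the pointwise bound (\ref{prop1}) on $\Psi_h$ degenerates as $h\to1$, so instead one uses the $L^1$-normalization (\ref{prop2}) \emph{after} integration in $\theta'$ to bound $q_h$ uniformly in $h$ (near $1$) by a fixed function of $L^2(Q_T)$, whence dominated convergence gives $\norm{q_h-q}_{L^2(Q_T)}\to0$.

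The main obstacle is precisely this source convergence: the first argument $\xi_{\gamma_{x,\theta'}(-t)}$ of the Poisson kernel is coupled to the integration variable $\theta'$ through the geodesic flow, so (\ref{prop4}) cannot be applied verbatim and one must first disentangle the two arguments by the parallel-transport identity and the change of variables of Lemma \ref{lem4.1}; simultaneously, since the kernel bound (\ref{prop1}) blows up like $(1-h)^{1-n}$, the dominating function can only be produced \emph{after} the $\theta'$-integration via (\ref{prop2}), exactly as the factor $(1-h')^{1-n}$ is frozen while letting $h\to1$ in the proof of Lemma \ref{lem4.1}. Once $\norm{r^1_{h,\lambda}-R^1_{\lambda,\xi}}_{L^2(Q_T)}\to0$ is known, the last assertion is immediate: $\I_{k_1}$ is bounded on $L^2(S\M)$ uniformly in $t$, so that $\norm{\I_{k_1}[r^1_{h,\lambda}]-\I_{k_1}[R^1_{\lambda,\xi}]}_{L^2(Q_T)}\le C\,\norm{r^1_{h,\lambda}-R^1_{\lambda,\xi}}_{L^2(Q_T)}\to0$.
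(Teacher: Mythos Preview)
Your proposal is correct and follows essentially the same route as the paper: form the difference $w_h=r^1_{h,\lambda}-R^1_{\lambda,\xi}$, observe it solves (\ref{IBVPsource}) with source $q_h-q$ and invoke Lemma~\ref{lemma2}, then prove $\|q_h-q\|_{L^2(Q_T)}\to0$ via the geodesic change of variables and the Poisson formula (\ref{prop4}). The only cosmetic difference is that the paper relies on the \emph{uniform} convergence in (\ref{prop4}) to pass to $L^2(Q_T)$, whereas you argue via dominated convergence using (\ref{prop2}); both are valid, and your final use of the $L^2$-boundedness of $\I_{k_1}$ for the ``Therefore'' clause is exactly what is implicit in the paper.
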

\begin{proof} Being solution of  (\ref{r1 lamda}), $r^1_{h,\lambda}$ satisfies the equation $$ \partial_t r^1_{h,\lambda}+\D r^1_{h,\lambda}+a_1(x)r^1_{h,\lambda}=
\I_{k_1}[r^1_{h,\lambda}](t,x,\theta)+\I_{k_1}[ \alpha_1 \beta_{a_1} \psi_{\lambda}](t,x,\xi_x).$$
 Let $v_{h,\lambda}(t,x,\theta)=r^1_{h,\lambda} - R^1_{\lambda,\xi}$, then we have
  \begin{equation*}
 \left\{
  \begin{array}{lll}
  \partial_t v_{h,\lambda}+\D v_{h,\lambda}+a_1(x)v_{h,\lambda}=
\I_{k_1}[v_{h,\lambda}]+\I_{k_1}[ \alpha_1 \beta_{a_1} \psi_{\lambda}]-\mathcal{L}_{\lambda,\xi}  & \textrm{in }\; Q_T,\cr
v_{h,\lambda}(0, x,\theta )=0 & \textrm{in }\; S\M ,\cr
v_{h,\lambda}(t,x,\theta)=0 & \textrm{on } \; \Sigma_-.
\end{array}
\right.
\end{equation*}
where $$\mathcal{L}_{\lambda,\xi} (t,x,\theta)=k_1(x,\theta, \xi_x)\rho_1(\gamma_{x,\xi_x}(-t))( \beta_{a_1} \psi_{\lambda})(t,x,\xi_x).$$
Therefore the Lemma \ref{lemma2} gives
$$\|v_{h,\lambda} \|_{L^2(Q_T)} \leq C \|\I_{k_1}[\alpha_1 \beta_{a_1} \psi_{\lambda}]-\mathcal{L}_{\lambda,\xi}\|_{L^2(Q_T)},$$
where $C$ depends only on $\M$ and $T$.
To prove the Lemma, we should prove that
$$\displaystyle \lim_{h  \rightarrow 1} \|\I_{k_1}[\alpha_1 \beta_{a_1} \psi_{\lambda}]
-\mathcal{L}_{\lambda,\xi}\|_{L^2(Q_T)}=0. $$
Since $\mathrm{supp}(k_1)\subset \M,$ we have
\begin{align*}
\int_0^T\!\!\!\int_{\M}\!\!\!\int_{S_x\M} \!\!\! \I_{k_1}[\alpha_1 \beta_{a_1} \psi_{\lambda}](t,x,\theta)&\,\dd\omega_x(\theta)  \, \dv(x)\,dt  =  \int_0^T\!\!\!\int_{\widetilde{\M}}\!\!\!\int_{S_x\widetilde{\M}} \!\!\!\int_{S_x\widetilde{\M}} \!\!\! k_1(x,\theta,\theta') \rho_1(\gamma_{x,\theta'}(-t))\cr &\Psi_{h}(\xi_{\gamma_{x,\theta'}(-t)},
\dot{\gamma}_{x,\theta'}(-t))(\beta_{a_1}
\psi_{\lambda})(t,x,\theta')  \, \dd\omega_x(\theta')  \,\dd\omega_x(\theta)  \, \dv(x)\,dt .\end{align*}
By changing $(x,\theta')$ into $$(\widetilde{x},\widetilde{\theta'})=(\gamma_{x,
\theta'}(-t),\dot{\gamma}_{x,\theta'}(-t)),$$ by setting  $\widetilde{\xi}=\mathcal{P}(\xi_x;x,\widetilde{x})$ and $\widetilde{\theta}=\mathcal{P}(\theta;x,\widetilde{x})$, we get
\begin{align*}
\int_0^T\!\!\!\int_{\widetilde{\M}}\!\!\!\int_{S_{x}\widetilde{\M}} \!\!\! \I_{k_1}[\alpha_1 \beta_{a_1} \psi_{\lambda}] =  & \int_0^T\!\!\!\int_{\widetilde{\M}}\!\!\!\int_{S_{\tilde{x}}
\widetilde{\M}} \!\!\!\int_{S_{\tilde{x}}\widetilde{\M}} \!\!\! k_1(\gamma_{\tilde{x},\widetilde{\theta'}}(t),\mathcal{P}(\widetilde{\theta}
;\widetilde{x},\gamma_{\tilde{x},\widetilde{\theta'}}(t)),
\dot{\gamma}_{\tilde{x},\widetilde{\theta'}}(t))\rho_1(\tilde{x})
\Psi_{h}(\tilde{\xi},\widetilde{\theta'})\cr &(\beta_{a_1}
\psi_{\lambda})(t,\gamma_{\tilde{x},\widetilde{\theta'}}(t),
\dot{\gamma}_{\tilde{x},\widetilde{\theta'}}(t))  \, \dd\omega_{\tilde{x}}(\widetilde{\theta'})  \, \dd\omega_{\tilde{x}}(\widetilde{\theta})  \, \dv(\widetilde{x})\,dt .\end{align*}
 Applying the  formula (\ref{prop4}) we have
 \begin{align*}
\displaystyle \lim_{h  \rightarrow 1}\int_0^T\!\!\!\int_{\M}\!\!\!\int_{S_{x}\M} \!\!\! \I_{k_1}[\alpha_1 \beta_{a_1} \psi_{\lambda}] =  & \int_0^T\!\!\!\int_{\widetilde{\M}}\!\!\!\int_{S_{\tilde{x}}
\widetilde{\M}}  \!\!\! k_1(\gamma_{\tilde{x},\widetilde{\xi}}(t),\mathcal{P}(\widetilde
{\theta}
;\widetilde{x},\gamma_{\tilde{x},\widetilde{\xi}}(t)),
\dot{\gamma}_{\tilde{x},\widetilde{\xi}}(t))\rho_1(\tilde{x})
\cr &(\beta_{a_1}
\psi_{\lambda})(t,\gamma_{\tilde{x},\widetilde{\xi}}(t),
\dot{\gamma}_{\tilde{x},\widetilde{\xi}}(t))  \, \dd\omega_{\tilde{x}}(\widetilde{\theta})  \, \dv(\widetilde{x})\,dt .\end{align*}
Performing the variable change $x=\gamma_{\tilde{x},\widetilde{\xi}}(t)$ and $\theta=\mathcal{P}(\widetilde
{\theta}
;\widetilde{x},\gamma_{\tilde{x},\widetilde{\xi}}(t))$, and noticing that $\dot{\gamma}_{\tilde{x},\widetilde{\xi}}(t))=\xi_x$, we obtain
 \begin{align*}
\displaystyle \lim_{h  \rightarrow 1}\int_0^T\!\!\!\int_{\widetilde{\M}}\!\!\!\int_{S_{x}\widetilde{\M}} \!\!\! \I_{k_1}[\alpha_1 \beta_{a_1} \psi_{\lambda}] =  & \int_0^T\!\!\!\int_{\widetilde{\M}}\!\!\!\int_{S_{x}
\widetilde{\M}}  \!\!\! k_1(x,\theta,\xi_x),\rho_1(\gamma_{x,\xi_x}(-t))
\cr &(\beta_{a_1}
\psi_{\lambda})(t,x,\xi_x)  \, \dd\omega_{x}(\theta)  \, \dv(x)\,dt .\end{align*}
The limit is taken in the topology of $L^2(Q_T)$ and uniformly on $Q_T$.
  This implies that  $$\displaystyle \lim_{h  \rightarrow 1} \|\I_{k_1}[\alpha_1 \beta_{a_1} \psi_{\lambda}]-\mathcal{L}_{\lambda,\xi}\|_{L^2(Q_T)}=0. $$ We deduce that  $$\displaystyle \lim_{h \rightarrow 1} \|r^1_{h,\lambda} - R^1_{\lambda,\xi} \|_{L^2(Q_T)} =0 \mbox{ and }\displaystyle \lim_{h \rightarrow 1} \|\I_{k_1}[r^1_{h,\lambda}]-\I_{k_1}[R^1_{\lambda,\xi}]\|_{L^2(Q_T)} =0.$$
\end{proof}
Miming the proof of Lemma \ref{lem4.2}, we have the following result.
\begin{lemma}\label{4.3}
 Let $x_0 \in \widetilde{\M} \setminus \M,\; \xi \in \mathbb{S}^{n-1}$ and $r^2_{h',\lambda}$ the unique solution of the final VBP (\ref{r2 lamda}) for $a=a'_2$ and $k=k'_2$. We denote by $R^2_{\lambda,\xi}$ the unique solution of
  \begin{equation*}
  \left\{
  \begin{array}{lll}
\partial_t R^2_{\lambda,\xi}+\D R^2_{\lambda,\xi}-a'_2(x)R^2_{\lambda,\xi}=-
\I^*_{k'_2}[R^2_{\lambda,\xi}]-k'_2(x,\theta,\xi_x) \rho_2(\gamma_{x,\xi_x}(-t))( \beta_{a'_2} \psi_{-\lambda})(t,x,\xi_x)  & \textrm{in }\; Q_T,\cr
R^2_{\lambda,\xi}(T, x,\theta )=0 & \textrm{in }\; S\M ,\cr
R^2_{\lambda,\xi}(t,x,\theta)=0 & \textrm{on } \; \Sigma_-.
\end{array}
\right.
\end{equation*}
Then, we have $$\displaystyle \lim_{h' \rightarrow 1} \|r^2_{h',\lambda} - R^2_{\lambda,\xi} \|_{L^2(Q_T)} =0.$$
Moreover,  $$\displaystyle \lim_{h' \rightarrow 1} \|\I^*_{k'_2}[r^2_{h',\lambda}]-\I^*_{k'_2}[R^2_{\lambda,\xi}]\|_{L^2(Q_T)} =0.$$
\end{lemma}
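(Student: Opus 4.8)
The plan is to mirror the argument of Lemma~\ref{lem4.2} step by step, replacing the forward initial value problem and the operator $\I_{k_1}$ by the backward final value problem and the adjoint operator $\I^*_{k'_2}$. First I would introduce the difference $w_{h',\lambda}=r^2_{h',\lambda}-R^2_{\lambda,\xi}$ and subtract the two transport equations. Since $r^2_{h',\lambda}$ solves (\ref{r2 lamda}) with $a=a'_2$ and $k=k'_2$, its source is $\I^*_{k'_2}[\alpha_2\beta_{-a'_2}\psi_{-\lambda}]$, while $R^2_{\lambda,\xi}$ carries the concentrated source $\mathcal{L}^*_{\lambda,\xi}(t,x,\theta):=k'_2(x,\theta,\xi_x)\,\rho_2(\gamma_{x,\xi_x}(-t))\,(\beta_{-a'_2}\psi_{-\lambda})(t,x,\xi_x)$. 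Hence $w_{h',\lambda}$ solves the final value problem with vanishing data at $t=T$ and on the boundary (as in (\ref{r2 lamda})), with source $\I^*_{k'_2}[\alpha_2\beta_{-a'_2}\psi_{-\lambda}]-\mathcal{L}^*_{\lambda,\xi}$.

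Next I would apply the well-posedness estimate of Lemma~\ref{lemma2} for the backward problem --- the same estimate already invoked to derive (\ref{estimate r2 Ik*}) --- to get
\begin{equation*}
\|w_{h',\lambda}\|_{L^2(Q_T)}\leq C\,\big\|\I^*_{k'_2}[\alpha_2\beta_{-a'_2}\psi_{-\lambda}]-\mathcal{L}^*_{\lambda,\xi}\big\|_{L^2(Q_T)},
\end{equation*}
with $C=C(\M,T)$. The whole statement then reduces to proving that this right-hand side tends to $0$ as $h'\to1$.

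For that I would reproduce, with the adjoint kernel, the chain of geodesic changes of variables from Lemma~\ref{lem4.2}: writing $\alpha_2(t,x,\theta)=\rho_2(\gamma_{x,\theta}(-t))\,\Psi_{h'}(\xi_{\gamma_{x,\theta}(-t)},\dot{\gamma}_{x,\theta}(-t))$ and unfolding $\I^*_{k'_2}$, the slot carrying the mollifier $\Psi_{h'}$ is straightened by the substitution $(\widetilde{x},\widetilde{\theta})=(\gamma_{x,\theta}(-t),\dot{\gamma}_{x,\theta}(-t))$ together with the parallel transport of directions. The uniform bound analogous to (\ref{domine}) allows passing to the limit under the integral by Lebesgue's theorem, and the Poisson formula (\ref{prop4}) concentrates $\Psi_{h'}$ at $\xi_x$, so that the limit integrand is exactly $\mathcal{L}^*_{\lambda,\xi}$. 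Using $\mathrm{supp}(k'_2)\subset\M$ and $\mathrm{supp}(\rho_2)\subset\widetilde{\M}\setminus\M$ to keep all supports consistent, this yields $\lim_{h'\to1}\|r^2_{h',\lambda}-R^2_{\lambda,\xi}\|_{L^2(Q_T)}=0$.

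Finally, since $\I^*_{k'_2}$ is bounded on $L^2(S\M)$ with norm controlled by $C_1,C_2$ (just as $\I_{k'_2}$ is), the second assertion follows at once from the first by linearity. The only genuine work is the change-of-variables bookkeeping, and the step I expect to be the main obstacle is tracking the parallel-transported directions $\xi_x$ through the adjoint operator so that the Poisson concentration lands in the intended argument of $k'_2$; this is nonetheless entirely parallel to the computation already performed for $\I_{k_1}$ in Lemma~\ref{lem4.2}, which is why the statement can be proved by ``miming'' it.
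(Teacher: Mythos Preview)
Your proposal is correct and follows exactly the approach the paper indicates: the paper simply states that Lemma~\ref{4.3} is obtained by ``miming the proof of Lemma~\ref{lem4.2}'', and your sketch does precisely that --- form the difference, apply the backward energy estimate from Lemma~\ref{lemma2}, and reduce to showing $\I^*_{k'_2}[\alpha_2\beta_{-a'_2}\psi_{-\lambda}]\to\mathcal{L}^*_{\lambda,\xi}$ in $L^2(Q_T)$ via the geodesic change of variables and the Poisson formula~(\ref{prop4}). The only minor remark is that the boundary condition for the backward problem is on $\Sigma_+$ (as in (\ref{r2 lamda})), not $\Sigma_-$; you already handled this correctly by deferring to (\ref{r2 lamda}).
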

\begin{lemma}\label{limJ lambda=0}
Set  $$J_{\lambda}(\xi)=\displaystyle \lim_{h',h \rightarrow 1}\big(J^1_{h,h',\lambda}(\xi)+J^2_{h,h',\lambda}(\xi)+
J^3_{h,h',\lambda}(\xi)\big).$$
Then we have $$\displaystyle \lim_{\lambda \rightarrow +\infty} J_{\lambda}(\xi)=0.$$
\end{lemma}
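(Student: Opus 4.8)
The plan is to pass to the limit $h,h'\to1$ in the three remainder terms $J^1,J^2,J^3$ by means of Lemmas \ref{lem4.2} and \ref{4.3}, and only then to let $\lambda\to\infty$. Concretely, I would use Lemma \ref{lem4.2} (together with the same computation run with the kernel $k'$ in place of $k_1$) to replace $r^1_{h,\lambda}$ by $R^1_{\lambda,\xi}$ in the $L^2(Q_T)$-limit, Lemma \ref{4.3} to replace $r^2_{h',\lambda}$ by $R^2_{\lambda,\xi}$, and the Poisson formula (\ref{prop4}) to resolve the concentrating amplitudes $\alpha_1,\alpha_2$. This identifies $J^j_\lambda=\lim_{h,h'\to1}J^j_{h,h',\lambda}$ as explicit integrals over $Q_T$ in which $R^1_{\lambda,\xi}$ and $R^2_{\lambda,\xi}$ are paired with the leading oscillatory factors. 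In particular $J^2_\lambda=\int_0^T\!\int_{S\M}R^2_{\lambda,\xi}\,\mathcal{L}'_{\lambda,\xi}$, where $\mathcal{L}'_{\lambda,\xi}(t,x,\theta)=k'(x,\theta,\xi_x)\rho_1(\gamma_{x,\xi_x}(-t))\beta_{a_1}(t,x,\xi_x)\,e^{i\lambda(t+\tau_-(x,\xi_x))}$ is the $h\to1$ limit of $\I_{k'}[\alpha_1\beta_{a_1}\psi_\lambda]$, while $J^1_\lambda$ and $J^3_\lambda$ feature $\I_{k'}[R^1_{\lambda,\xi}]$ against the (concentrated) second amplitude.

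Next I would record the facts that survive $\lambda\to\infty$. By Lemma \ref{lemma2} the source-to-solution map is bounded on $L^2(Q_T)$ uniformly in $\lambda$, so $\|R^1_{\lambda,\xi}\|_{L^2(Q_T)}$ and $\|R^2_{\lambda,\xi}\|_{L^2(Q_T)}$ are bounded independently of $\lambda$; moreover, since the sources of $R^1_{\lambda,\xi}$ and $R^2_{\lambda,\xi}$ carry the factors $e^{\pm i\lambda(t+\tau_-(x,\xi_x))}$, which tend weakly to $0$ in $L^2(Q_T)$ (Riemann--Lebesgue in $t$, the phase having non-vanishing $t$-derivative), the weak continuity of the bounded solution operator yields $R^1_{\lambda,\xi}\rightharpoonup0$ and $R^2_{\lambda,\xi}\rightharpoonup0$. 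It is essential to note that one cannot interchange the two limits: for $h,h'$ fixed one has $J^j_{h,h',\lambda}\to0$ as $\lambda\to\infty$ at once, because $\|r^1_{h,\lambda}\|_{L^2}$ and $\|r^2_{h',\lambda}\|_{L^2}\to0$ by (\ref{rj tend vers 0}); but this decay degrades as $h,h'\to1$, the Poisson bound (\ref{prop1}) blowing up the constants, since the angular concentration destroys precisely the compactness of the collision operator (used in Lemma \ref{lemma Ik Ik*}) that killed the oscillation.

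The remaining and main point is therefore to bound $J^1_\lambda,J^2_\lambda,J^3_\lambda$ directly. Each is a bilinear pairing of an explicit factor carrying the phase $e^{\pm i\lambda(t+\tau_-(x,\xi_x))}$ against a remainder whose driving source carries the conjugate phase, so the two oscillations are resonant and neither a crude Cauchy--Schwarz (both factors merely bounded) nor a plain weak-convergence pairing (both factors merely weakly null) suffices. I expect this resonance to be the core difficulty. I would resolve it by representing $R^1_{\lambda,\xi}$ and $R^2_{\lambda,\xi}$ through the Duhamel formula along the geodesic flow and applying a stationary-phase analysis: the derivative of $t+\tau_-(x,\xi_x)$ along a characteristic equals $1$ plus the flow-derivative of $x\mapsto\tau_-(x,\xi_x)$, which cancels to zero only where the velocity $\theta$ aligns with the transported direction $\xi_x$ (there the phase becomes characteristic). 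Off this diagonal the phase is non-stationary, so integration by parts produces an amplitude $O(\lambda^{-1})$ that survives the cancellation of the resonant exponentials; on a shrinking angular neighbourhood of the diagonal the contribution is estimated by stationary phase in the $(n-1)$ angular variables and is $O(\lambda^{-(n-1)/2})$. Since $n\ge2$, both bounds tend to $0$, and the scattering factors $\I_{k'}[R^1_{\lambda,\xi}]$ entering $J^1_\lambda,J^3_\lambda$ are absorbed into the same scheme after expanding the solution in its Neumann series, every term of which inherits the same non-stationary phase. Hence $J_\lambda(\xi)=\lim_{h,h'\to1}(J^1_{h,h',\lambda}+J^2_{h,h',\lambda}+J^3_{h,h',\lambda})\to0$ as $\lambda\to\infty$, which is the assertion.
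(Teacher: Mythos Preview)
Your first two paragraphs are essentially correct and match the paper: you pass to the limit $h,h'\to1$ using Lemmas \ref{lem4.2} and \ref{4.3} together with (\ref{prop4}), and you correctly observe that $R^1_{\lambda,\xi}$ and $R^2_{\lambda,\xi}$ are uniformly bounded in $L^2(Q_T)$ and converge only weakly to $0$. Where you go astray is in the diagnosis that follows. You claim that after the $h,h'\to1$ limit ``the angular concentration destroys precisely the compactness of the collision operator'', and you then build an elaborate stationary-phase/Neumann-series scheme to handle the alleged resonance. But the compactness of $\I_{k'}$ (and $\I^*_{k'}$) as an operator on $L^2(S\M)$ is a fixed structural fact about the kernel $k'$; it has nothing to do with the Poisson amplitudes and is not lost in the limit. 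What the angular concentration destroys is only the \emph{strong} convergence $\|R^1_{\lambda,\xi}\|_{L^2}\to0$ that held for $r^1_{h,\lambda}$ with $h$ fixed.

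The paper's argument exploits exactly this. After the $h,h'\to1$ limit, every one of the three terms in $J_\lambda(\xi)$ contains either $\I_{k'}[R^1_{\lambda,\xi}]$ or (after moving the $\theta$-integration onto $R^2$) $\I^*_{k'}[R^2_{\lambda,\xi}]$: the first term pairs $\I_{k'}[R^1_{\lambda,\xi}]$ with a bounded amplitude, your $J^2_\lambda$ rewrites as $\int_0^T\!\int_\M \I^*_{k'}[R^2_{\lambda,\xi}](t,x,\xi_x)\,\rho_1(\gamma_{x,\xi_x}(-t))(\beta_{a_1}\psi_\lambda)(t,x,\xi_x)\,\dv\,dt$, and $J^3_\lambda$ pairs $\I_{k'}[R^1_{\lambda,\xi}]$ with the bounded family $R^2_{\lambda,\xi}$. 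Since $\I_{k'}$ is compact and $R^j_{\lambda,\xi}\rightharpoonup0$, one has $\I_{k'}[R^1_{\lambda,\xi}]\to0$ and $\I^*_{k'}[R^2_{\lambda,\xi}]\to0$ \emph{strongly} in $L^2$, and a plain Cauchy--Schwarz (strong-null against bounded) finishes. No resonance analysis is needed: the collision operator supplies the missing compactness in the pairing itself. Your stationary-phase program might be pushed through with enough regularity on $k'$ and $a_j$, but it is both far heavier and, as written, only a sketch (the Neumann-series expansion and the uniform stationary-phase bounds would each need justification). The simpler route is to recognise that $\I_{k'}$ is still sitting inside every term of $J_\lambda$.
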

\begin{proof}
 We proceed like in Lemma \ref{lem4.1}, using Lemma \ref{lem4.2} and property (\ref{prop4}), we find
  \begin{align*}
  \displaystyle \lim_{h \rightarrow 1}J^1_{h,h',\lambda}(\xi)=\int_{Q_T} & \rho_2(\gamma_{x,\theta}(-t))\Psi_{h'}(\xi_{\gamma_{x,\theta}(-t)}
  ,\dot{\gamma}_{x,\theta}(t))( \beta_{-a'_2}\psi_{-\lambda})(t,x,\theta)\cr & \I_k'[R^1_{\lambda,\xi}](t,x,\theta) \, \dvv \,dt,
  \end{align*}
 $$\displaystyle \lim_{h \rightarrow 1}J^2_{h,h',\lambda}(\xi)= \int_{Q_T} r^2_{h',\lambda} k'(x,\theta,\xi_x) \rho_1(\gamma_{x,\xi_x}(-t))\beta_{a_1}\psi_{\lambda}](t,x,\xi_x) \, \dvv \,dt,$$
   $$\displaystyle \lim_{h \rightarrow 1} J^3_{h,h',\lambda}(\xi)=  \int_{Q_T} r^2_{h',\lambda}(t,x,\theta) \I_{k'}[ R^1_{\lambda,\xi}](t,x,\theta) \, \dvv \,dt.$$
 By the same arguments, we prove that taking the limit as $h' \rightarrow 1$, we find
 \begin{multline*}
 J_{\lambda}(\xi)=\displaystyle \lim_{h' \rightarrow 1} \displaystyle \lim_{h \rightarrow 1} \big(J^1_{h,h',\lambda}(\xi)+J^2_{h,h',\lambda}(\xi)+
J^3_{h,h',\lambda}(\xi)\big)=\cr
\int_0^T\!\!\!\int_{\M}\bigg(\rho_1(\gamma_{x,\xi_x}
(-t))(\beta_{-a'_2}\psi_{-\lambda})(t,x,\xi_x)(t)\I_{k'}[R^1_
{\lambda,\xi}] + \I^*_{k'}[R^2_{\lambda,\xi}]\rho_2(\gamma_{x,\xi_x}
(-t))(\beta_{a_1}\psi_{\lambda} \bigg)\, \dv \,dt \cr
+\int_{Q_T} R^2_{\lambda,\xi}(t,x,\theta) \I_{k'}[ R^1_{\lambda,\xi}](t,x,\theta) \, \dvv \,dt.
\end{multline*}
 We emphasize that the operator $\I_k$ is compact and that for $j=1,2$, $R^j_{\lambda,\xi} \rightharpoonup 0$ in $L^2(S\M)$, as $\lambda \rightarrow \infty$. This leads to
$$\displaystyle \lim_{\lambda \rightarrow \infty} \|\I_{k'}[R^1_{\lambda,\xi}](t,\cdot,\cdot)\|_{L^2(S\M)} = \lim_{\lambda \rightarrow \infty} \|\I^*_{k'}[R^2_{\lambda,\xi}](t,\cdot,\cdot)\|_{L^2(S\M)} =0,\; \forall t\in(0,T).$$ Applying the Lebesgue's Theorem, we obtain $$\displaystyle \lim_{\lambda \rightarrow \infty} \|\I_{k'}[R^1_{\lambda,\xi}]\|_{L^2(Q_T)} = \lim_{\lambda \rightarrow \infty} \|\I^*_{k'}[R^2_{\lambda,\xi}]\|_{L^2(Q_T)} =0.$$
This completes the proof of the Lemma.
\end{proof}
Note that $ \displaystyle \lim_{h,h' \rightarrow 1} J_{h,h',\lambda}(\xi)$ in Lemma \ref{lem4.1} does not depend on $\lambda$. Using Lemma \ref{lem4.1}  and lemma \ref{limJ lambda=0}, the equality (\ref{identity 4.2}) yields to
\begin{align}\label{4.9}
  \int_0^T\!\!\!\int_{\widetilde{\M} \setminus \M}\!\!\! (\rho_1 \rho_2)(x)k(\gamma_{x,\xi}(t),\dot{\gamma}
_{x,\xi}(t),\dot{\gamma}
_{x,\xi}(t)) \, \dv\,dt =0,
\end{align}
for any $\xi\in \s^{n-1}$ and any $\rho_j\in C_0^{\infty}(\widetilde{\M}\setminus \M),\;j=1,2$.
\subsection{ End of the proof of Theorem \ref{th2}}
Let $x_0\in \widetilde{\M} \setminus \M$, $\xi \in \s^{n-1}$ and consider a positive function $\rho_0\in C_0^{\infty}([0,1])$ such that $\| \rho_0\|_{L^1([0,1])}=1$. For $h>0$, we define $$\rho_h(x)=h^{-n}\rho_0\big(\frac{d_{\g}(x,x_0)}{h}\big).$$
Since $\mathrm{supp}(\rho_h) \subset B(x_0,h)$ then for $h$ sufficiently small, we have  $\mathrm{supp}(\rho_h) \cap \M =\emptyset$ and $\forall x\in \mathrm{supp}(\rho_h)$ and $\gamma_{x,\xi}(\pm T) \notin \M$. Take $\rho_1=\rho_h$, then  as $h \rightarrow 0$, the identity (\ref{4.9}) gives
\begin{equation}\label{4.10}
  \int_0^T\!\!\!\rho_2(x_0)k(\gamma_{x_0,\xi}(t),\dot{\gamma}
_{x_0,\xi}(t),\dot{\gamma}
_{x_0,\xi}(t)) \,dt=0. \end{equation}
Now let us choose the function $\rho_2$ to be a good approximation of the sign of $k$. Let $$U :=\{x \in \widetilde{\M} \setminus \M,\; k(\gamma_{x,\xi}(t),\dot{\gamma}
_{x,\xi}(t),\dot{\gamma}
_{x,\xi}(t)) > 0\}.$$
From the continuity of $k$, it follows that $U$ is an open subset of $\widetilde{\M} \setminus \M$ . Let $(K_m)_{m\in \N}$ be a sequence of compact sets such that $\displaystyle \bigcup_m K_m= U$ and $K_m \subset  K_{m+1}$. For $m \in \N$, let $\rho_m\in C_0^{\infty }(\widetilde{\M} \setminus \M;\R)$ such that $\chi_{K_m} \leq \chi_m \leq \chi_U$, and let
$$\rho_m = 2\chi_m-1.$$ Thus, using the continuity properties of $k$ again, we obtain $$\displaystyle \lim_{m \rightarrow +\infty}
\rho_m(x_0)k(\gamma_{x_0,\xi}(t),\dot{\gamma}
_{x_0,\xi}(t),\dot{\gamma}
_{x_0,\xi}(t)) = |k(\gamma_{x_0,\xi}(t),\dot{\gamma}
_{x_0,\xi}(t),\dot{\gamma}
_{x_0,\xi}(t))|.$$
By choosing $\rho_2=\rho_m$ and taking the limit as $m \rightarrow +\infty$, (\ref{4.10}) gives that
  \begin{equation*}
 \int_{0}^T\!\!\!|k(\gamma_{x_0,\xi}(t),\dot{\gamma}
_{x_0,\xi}(t),\dot{\gamma}
_{x_0,\xi}(t))| \,dt =0.
\end{equation*}
We proceed like in the proof of the estimate (\ref{ineq -T T}), to obtain \begin{equation*}  \int_{-T}^T\!\!\!|k(\gamma_{x_0,\xi}(t),\dot{\gamma}
_{x_0,\xi}(t),\dot{\gamma}
_{x_0,\xi}(t))| \,dt=0.
\end{equation*}
  Recall that we have assumed that $k_j(x,\theta,\theta')=\sigma_j(x)\chi(\theta,\theta')$, $j=1,2$, with $\sigma_j\in C^{1}(\M)$, $\chi\in C(S_x(\M) \times S_x(\M))$ for a.e. $x \in \M$ and $\chi(\theta,\theta) \neq 0, \forall \theta \in S_x\M$. We set $$\sigma=\sigma_1-\sigma_2.$$ We extend $\sigma$ by $0$ outside $\M$. We can also extend $\chi$ to be continuous on $S_x(\widetilde{\M}) \times S_x(\widetilde{\M}))$ such that $\chi(\theta,\theta)\neq 0,\, \forall \theta \in S_x\widetilde{\M}$. 
Then there exist a constant $m_\chi >0$ such that for a.e $x_0\in \widetilde{\M } \setminus \M$ we have $ |\chi(\dot{\gamma}
_{x_0,\xi}(t),\dot{\gamma}
_{x_0,\xi}(t))| \geq m_\chi, \; \forall t \in [-T,T]$. Thus we have
 \begin{equation*}
  \int_{-T}^T\!\!\!|\sigma(\gamma_{x_0,\xi}(t)) |\,dt =0,
\end{equation*} and then \begin{equation*}
  \int_{-T}^T\!\!\!\sigma(\gamma_{x_0,\xi}(t)) \,dt =0,
\end{equation*}

 Keeping in mind that $\mathrm{supp}(\sigma) \subset \M$ and that $T> Diam_{\g} \M$ and proceeding like we did in the proof of Lemma (\ref{lem3.4}), we get
  \begin{equation*}
  \X(\sigma)(x',\theta') =0,\;\forall (x',\theta')\in \p_-S\M.
\end{equation*} and then $\X(\sigma)=0$. The stability estimate (\ref{ray inverse}) and the regularity of $\sigma$  yield to $\sigma=0$ on $\M$.
This completes the proof of Theorem \ref{th2}.
\appendix
\section{}
\setcounter{equation}{0}
\begin{lemma}\label{lemma}
 Assume that $\M$ is a simple Riemannian manifold.  Let $T>0$, $p\geq 1$, $a\in \mathscr{A}_0(C_0)$ and $k\in \mathscr{K}$. Suppose that $f_-\in \mathcal{L}^p(\Sigma_-)$. If $q\in L^p(Q)$ and $u_0\in L^p(S\M)$ then the problem
 \begin{equation}\label{ibvpgeneral}
 \left\{
\begin{array}{lll}
\partial_t u(t,x,\theta)+\D u(t,x,\theta)+a(x,\theta)u(t,x,\theta)=\I_k[u](t,x,\theta) +q(t,x,\theta)  & \textrm{in }\; Q_T,\cr
u(0, \cdot,\cdot )=u_0 & \textrm{in }\; S\M ,\cr
u=f_- & \textrm{on } \; \Sigma_-.
\end{array}
\right.
 \end{equation}
 admits a unique weak solution $u \in C^0([0,T];\mathcal{W}^p)$.\\
 If we assume that $u_0\in \mathcal{W}^p$ and $q\in C^1([0,T];L^p(S\M))$ then $u$ is a strong solution and $$u\in  C^1([0,T];L^p(S\M))\cap  C^0([0,T];\mathcal{W}^p).$$
 Moreover, there exist a constant $C>0$ such that
\begin{equation}\label{estimate}
  \norm{u(t,\cdot,\cdot)}_{L^p(S\M)}+ \norm{u}_{\mathcal{L}^p(\Sigma_+)} \leq  C \para{ \norm{u_0}_{L^p(S\M)}+ \norm{f_-}_{\mathcal{L}^p(\Sigma_-)}+ \norm{q}_{L^p(Q)}},\;\forall t\in [0,T].
\end{equation}
\end{lemma}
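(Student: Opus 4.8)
The plan is to treat \eqref{ibvpgeneral} as an abstract Cauchy problem on the Banach space $X=L^p(S\M)$ and to follow the semigroup approach of \cite{[DL]}. First I would isolate the free-streaming generator $A_0u=-\D u$ with domain $\mathrm{D}(A_0)=\{u\in\mathcal{W}^p:\ u|_{\partial_-S\M}=0\}$, i.e. with the homogeneous incoming boundary condition that is natural for the forward evolution \eqref{ibvpgeneral}, and show that it generates a strongly continuous contraction semigroup $(U(t))_{t\geq0}$ on $X$. Since $(\M,\g)$ is simple every geodesic has finite length, so $U(t)$ is the explicit streaming semigroup that transports a function backward along the geodesic flow and annihilates the mass that has already escaped through $\partial_-S\M$; in particular $U(t)=0$ once $t>\mathrm{Diam}_\g(\M)$. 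The contraction property in $L^p$ follows from the invariance of $\dvv$ under the flow (Liouville), and the control of the outgoing trace from the Gauss--Ostrogradskii formula \eqref{divergence formula horizontal} together with Santalo's formula (Lemma \ref{lemma0}). Because $u\mapsto au$ and $u\mapsto\I_k[u]$ are bounded on $X$, the full operator $-T=A_0-aI+\I_k$ is a bounded perturbation of $A_0$, so the bounded perturbation theorem gives a $C_0$-semigroup $(S(t))_{t\geq0}$ with $\mathrm{D}(-T)=\mathrm{D}(A_0)$ and $\|S(t)\|_{\mathcal{L}(X)}\leq e^{\omega t}$ for some $\omega$ depending only on $C_0,C_1,C_2$.

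Next I would reduce the inhomogeneous incoming data to the homogeneous case by lifting. I would construct a lift $\Phi$ of $f_-$ into $Q_T$, transporting the boundary value along geodesics with the absorption integrating factor so that $\Phi|_{\Sigma_-}=f_-$; then $w=u-\Phi$ carries zero incoming trace and solves the same equation with modified initial datum $\tilde u_0=u_0-\Phi(0,\cdot)$ and modified source $\tilde q=q+\I_k[\Phi]-(\partial_t+\D+a)\Phi$. For $w$ the variation-of-constants formula $w(t)=S(t)\tilde u_0+\int_0^t S(t-s)\tilde q(s)\,ds$ defines the mild solution, which lies in $C^0([0,T];X)$ and is unique; this produces the weak solution and its uniqueness. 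When $\tilde u_0\in\mathrm{D}(A_0)$ and $\tilde q\in C^1([0,T];X)$ — which holds under the stated hypotheses $u_0\in\mathcal{W}^p$, $q\in C^1([0,T];L^p(S\M))$ — the standard regularity theorem for $C_0$-semigroups upgrades the mild solution to a classical one, so that $w$, hence $u$, lies in $C^1([0,T];L^p(S\M))\cap C^0([0,T];\mathcal{W}^p)$: the asserted strong solution.

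I would then establish the estimate \eqref{estimate} by the energy method rather than from the abstract semigroup bound, since it must control the interior norm and the outgoing trace simultaneously. Multiplying the equation by $p|u|^{p-2}\bar u$ and integrating over $S\M$, the transport term becomes $\int_{S\M}\D(|u|^p)\,\dvv=\int_{S\M}\overset{\hh}{\dive}(|u|^p\theta)\,\dvv$ by \eqref{div theta}, and \eqref{divergence formula horizontal} turns it into $\int_{\partial S\M}\langle\theta,\nu\rangle|u|^p\,\dss$; its $\partial_+S\M$ part is exactly the outgoing trace contribution $\|u(t)\|^p$ and its $\partial_-S\M$ part is bounded by $\|f_-\|^p$. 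Bounding the absorption term by $C_0$, the collision term by $C\|u\|_{L^p}^p$ through the Hölder computation used in the proof of Lemma \ref{lemma Ik Ik*}, and the source term by Young's inequality, an application of Gronwall's lemma after integrating in $t$ yields \eqref{estimate}.

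The main obstacle I anticipate is the rigorous treatment of boundary traces for data of mere $L^p$ regularity: giving meaning to $u|_{\Sigma_\pm}$, justifying the lift $\Phi$, and legitimizing the integration by parts above when $u$ is only a weak solution. I would handle this by first carrying out the construction and the energy identity for smooth (or $\mathcal{W}^p$) data, where the trace theorem for $\mathcal{W}^p$ and the Green identity are valid, and then passing to the limit using density of regular data together with \eqref{estimate}, which forces the approximating traces to converge in $\mathcal{L}^p(\Sigma_\pm)$. This density-plus-a-priori-estimate argument is precisely the part borrowed from \cite{[DL]} (Theorem $3$, p.~229).
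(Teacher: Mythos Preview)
Your proposal is correct and follows essentially the same route as the paper: existence and uniqueness via the $C_0$-semigroup machinery of \cite{[DL]} (you spell out the bounded-perturbation and boundary-lifting steps that the paper merely invokes by reference), and the a priori estimate \eqref{estimate} via the multiplier $p|u|^{p-2}\bar u$, the identity $\D(|u|^p)=\overset{\hh}{\dive}(|u|^p\theta)$, the Gauss--Ostrogradskii formula \eqref{divergence formula horizontal}, H\"older/Young bounds on the collision and source terms, and Gr\"onwall. The only substantive addition you make is the density-plus-a-priori-estimate argument to legitimize the energy identity for merely weak solutions, a point the paper's proof does not address explicitly.
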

\begin{proof}
To prove that the system (\ref{ibvpgeneral}) is uniquely solvable, we can mime the proof, detailed in the Euclidian case, in \cite{[DL]} (Theorem $3$ p.229). We begin by  proving  that the operator $T$ is an infinitesimal generator of a $C^0$-semigroup on the Banach space $\mathcal{W}^p$ and then  we infer that the initial boundary value problem (\ref{ibvpgeneral}) admits a unique weak solution. With the additional hypothesis on $u_0$ and $q$, we deduce that the weak solution is a strong (classical) one.\\
Let us prove the estimate (\ref{estimate}). Multiplying the first equation of (\ref{ibvpgeneral}) by $\abs{u}^{p-2} \overline{u}$, integrating over $S\M$ then taking its real part, we get
\begin{multline}\label{integrer}
  \int_{S\M} \frac{1}{p} \p_t \abs{u}^{p}(t) \dvv (x,\theta)+ Re \int_{S\M} \abs{u}^{p-2}(t) \overline{u}(t)\D u(t) \dvv (x,\theta)+\int_{S\M} \abs{u}^{p}(t) a  \dvv (x,\theta)\cr
  =Re \int_{S\M} \abs{u}^{p-2}(t) \overline{u}(t)\I_k[u](t) \dvv (x,\theta)+ Re \int_{S\M} \abs{u}^{p-2}(t) \overline{u}(t)q(t) \dvv (x,\theta)
\end{multline}
Combining  ( \ref{div theta}), (\ref{nabla h u}) and (\ref{Du nabla u}), we obtain $ \overset{\hh}{\dive}(\theta \abs{u}^p)=\overset{\hh}{\nabla}_i(\theta^i \abs{u}^p)=\theta^i\overset{\hh}{\nabla}_i(\abs{u}^p)=p\abs{u}^{p-2} Re(\overline{u}\D u)$. Applying Gauss-Ostrogradskii formula for the horizontal  divergence (\ref{divergence formula horizontal}), we get  $$ Re \int_{S\M} \abs{u}^{p-2}(t) \overline{u}(t)\D u(t) \dvv (x,\theta)= \frac{1}{p} \int_{\p S\M}  \abs{u}^p \langle \theta, \nu(x) \rangle \, \dss.$$
The equality (\ref{integrer}) becomes
\begin{multline*}
  \int_{S\M} \p_t \abs{u}^{p}(t) \dvv (x,\theta)+ \int_{\p_+S\M} \abs{u}^{p}(t)  \langle \theta, \nu(x) \rangle \, \dss= \cr \int_{\p_-S\M} \abs{u}^{p}(t)  \langle \theta, \nu(x) \rangle \, \dss -p \int_{S\M} \abs{u}^{p}(t) a(x,\theta)  \dvv (x,\theta)
  \cr +p Re \int_{S\M} \abs{u}^{p-2}(t) \overline{u}(t)\I_k[u](t) \dvv (x,\theta) +p Re \int_{S\M} \abs{u}^{p-2}(t) \overline{u}(t)q(t) \dvv (x,\theta).
\end{multline*}
Let $p'$ be the integer that satisfies $ \frac{1}{p}+ \frac{1}{p'}=1$. By H\"older's inequality, we have
$$\abs{Re \int_{S\M} \abs{u}^{p-2}(t) \overline{u}(t)\I_k[u](t) \dvv (x,\theta)} \leq C_1^{\frac{1}{p'}} C_2^{\frac{1}{p}} \norm{u}_{L^p(S\M)}^p$$ and by the Young inequality, we get $$\abs{ Re \int_{S\M} \abs{u}^{p-2}(t) \overline{u}(t)q(t) \dvv (x,\theta)}\leq {\frac{1}{p}} \norm{q}_{L^p(S\M)}^p +{\frac{1}{p'}} \norm{u}_{L^p(S\M)}^p.$$
We infer that
 \begin{multline}\label{I}
\p_t  \norm{u}_{L^p(S\M)}^p + \int_{\p_+S\M} \abs{u}^{p}(t)  \langle \theta, \nu(x) \rangle \, \dss \leq \int_{\p_-S\M} \abs{u}^{p}(t)  \abs{\langle \theta, \nu(x) \rangle }\, \dss \cr
 +\norm{a}_{L^{\infty}(S\M)} \norm{u}_{L^p(S\M)}^p+p C_1^{\frac{1}{p'}} C_2^{\frac{1}{p}} \norm{u}_{L^p(S\M)}^p+
\norm{q}_{L^p(S\M)}^p+{\frac{p}{p'}} \norm{u}_{L^p(S\M)}^p.
\end{multline}
Hence, there exist a constant $C$ depending on $p,\,C_0, \,C_1,\,C_2$ such that
\begin{equation}\label{II}
 \p_t  \norm{u}_{L^p(S\M)}^p \leq  \int_{\p_-S\M} \abs{u}^{p}(t)  \abs{\langle \theta, \nu(x) \rangle }\, \dss + \norm{q}_{L^p(S\M)}^p+C \norm{u}_{L^p(S\M)}^p.
\end{equation}
Integrating over $[0,t]$,we get
\begin{equation*}
   \norm{u(t)}_{L^p(S\M)}^p \leq \norm{f_-}^p_{\mathcal{L}^p(\Sigma_-)}+ \norm{q}_{L^p(Q)}^p+  \norm{u(0)}_{L^p(S\M)}^p+ C \displaystyle \int_{0}^{t} \norm{u(s)}_{L^p(S\M)}^p  .
\end{equation*}
By Gr\"onwall's inequality, there exist a constant $C$ such that
\begin{equation}\label{I1}
   \norm{u(t)}_{L^p(S\M)}^p \leq C \big (\norm{f_-}^p_{\mathcal{L}^p(\Sigma_-)}+ \norm{q}_{L^p(Q)}^p+   \norm{u(0)}_{L^p(S\M)}^p\big ) .
\end{equation}
Now integrating (\ref{I}) over $[0,T]$, we arrive to
\begin{equation*}
   \norm{u(T)}_{L^p(S\M)}^p+\norm{u}^p_{\mathcal{L}^p(\Sigma_+)} \leq \norm{f_-}^p_{\mathcal{L}^p(\Sigma_-)}+ \norm{q}_{L^p(Q)}^p+  \norm{u(0)}_{L^p(S\M)}^p+ C \displaystyle \int_{0}^{T} \norm{u(s)}_{L^p(S\M)}^p  .
\end{equation*}
In light with (\ref{I1}), this implies that \begin{equation}\label{I2}
\norm{u}^p_{\mathcal{L}^p(\Sigma_+)} \leq C \big ( \norm{f_-}^p_{\mathcal{L}^p(\Sigma_-)}+ \norm{q}_{L^p(Q)}^p+  \norm{u(0)}_{L^p(S\M)}^p\big )  .
\end{equation}
Combining (\ref{I1}) and  (\ref{I2}), we obtain the claimed estimate.
\end{proof}
\begin{lemma}\label{lemA2}
Let $(\lambda_n)_{n\in \N}$ be a real sequence going towards $+ \infty$, then the sequence $e^{i \lambda_n \tau_-(x,\theta)}$ converges  towards zero for the weak topology of $L^2(S\M)$.
\end{lemma}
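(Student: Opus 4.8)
The statement is a Riemann--Lebesgue phenomenon, and the plan is to reduce the oscillatory integral over $S\M$ to a family of one--dimensional oscillatory integrals along geodesics. Since $\abs{e^{i\lambda_n\tau_-(x,\theta)}}=1$ and $S\M$ is compact, the sequence is bounded in $L^2(S\M)$, with $\norm{e^{i\lambda_n\tau_-}}_{L^2(S\M)}=\mathrm{vol}(S\M)^{1/2}$ for every $n$. Hence, by a standard density argument, it suffices to prove that
\begin{equation*}
\int_{S\M} e^{i\lambda_n\tau_-(x,\theta)}\,\overline{g(x,\theta)}\,\dvv \To 0,\qquad n\to\infty,
\end{equation*}
for every $g$ in the dense subspace $C(S\M)\subset L^2(S\M)$; the passage to a general $g\in L^2(S\M)$ then follows from an $\varepsilon/3$ estimate using the uniform bound on the norms.

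First I would apply the Santal\'o formula (Lemma \ref{lemma0}) on $\p_-S\M$ to the function $u(x,\theta)=e^{i\lambda_n\tau_-(x,\theta)}\overline{g(x,\theta)}$, which is integrable on $S\M$. Parametrising $S\M$ by $(x',\theta')\in\p_-S\M$ and $t\in[0,\tau_+(x',\theta')]$ through $(x,\theta)=\phi_t(x',\theta')$, the crucial point is the flow identity $\tau_-(\phi_t(x',\theta'))=\tau_-(x',\theta')-t=-t$, valid because $\tau_-$ vanishes on $\p_-S\M$. This turns the integral into
\begin{equation*}
\int_{\p_-S\M}\Bigl(\int_0^{\tau_+(x',\theta')} e^{-i\lambda_n t}\,\overline{g(\phi_t(x',\theta'))}\,dt\Bigr)\mu(x',\theta')\,\dss,
\end{equation*}
so that the fast oscillation is now isolated in the inner $t$--integral.

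For each fixed $(x',\theta')$ the map $t\mapsto \overline{g(\phi_t(x',\theta'))}$ is continuous, hence in $L^1([0,\tau_+(x',\theta')])$, so by the classical Riemann--Lebesgue lemma the inner integral tends to $0$ as $\lambda_n\to\infty$. Moreover it is dominated by $\tau_+(x',\theta')\,\norm{g}_{L^\infty(S\M)}$, which is uniformly bounded since $\tau_+$ is bounded on the non-trapping manifold $\M$, while $\mu\le 1$ and $\p_-S\M$ has finite measure. The dominated convergence theorem then forces the whole expression to $0$, which is the required limit on $C(S\M)$ and, after the density step, on all of $L^2(S\M)$.

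The only real subtlety --- the main obstacle --- is the low regularity of $\tau_\pm$: these functions are merely bounded and measurable, being singular along the glancing directions $T\M\cap T(\p\M)$. This is precisely why I route the argument through the Santal\'o formula rather than attempting a global change of variables, and why I invoke Riemann--Lebesgue only on the inner integral, where the integrand is genuinely $L^1$; the glancing set carries no measure and is harmless under the $t$--integration.
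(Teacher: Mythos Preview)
Your proof is correct and follows essentially the same route as the paper: Santal\'o's formula together with the flow identity $\tau_-(\phi_t(x',\theta'))=-t$ reduces the pairing to one--dimensional oscillatory integrals along geodesics, and then Riemann--Lebesgue plus dominated convergence finishes. The only cosmetic difference is that you perform the density step upstream on $L^2(S\M)$ (reducing to $g\in C(S\M)$ and quoting Riemann--Lebesgue as a black box), whereas the paper works with a general $f\in L^2(S\M)$ and re-derives the Riemann--Lebesgue conclusion at the $1$D level by testing against characteristic functions $\chi_{[a,b]}$ and using uniform boundedness of the linear forms $l_n$.
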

\begin{proof}
  Let $f \in L^2(S\M)$, we aim to prove $\langle f,e^{i \lambda_n \tau_-} \rangle \longrightarrow 0$ as $n \rightarrow + \infty$.
  Using the property \begin{equation*}
  \tau_-(\gamma_{x,\theta}(s),\dot{\gamma}_{x,\theta}(s))=
  \tau_-(x,\theta)-s \; \mbox{ and }\; \tau_-=0 \; \mbox{ on } \;\p_-S\M,
   \end{equation*}
  we have
  \begin{multline*}
 \displaystyle \int_{S\M} f(x,\theta) e^{-i \lambda_n \tau_-(x,\theta)} \dvv (x,\theta)= \displaystyle  \int_{\p_-S\M}  \int_0^{\tau_+(x',\theta')} f(\gamma_{x',\theta'}(s),\dot{\gamma}_{x',\theta'}(s))e^{i \lambda_n s} ~ds \mu(x',\theta') \dss
  \cr =\displaystyle  \int_{\p_-S\M}  \int_0^{\tau_+(x',\theta')}\alpha_{x',\theta'}(s)e^{i \lambda_n s} ~ds \mu(x',\theta') \dss,
\end{multline*}
where we set $\alpha_{x',\theta'}(s)= f(\gamma_{x',\theta'}(s),\dot{\gamma}_{x',\theta'}(s))$. \\
We fix $(x',\theta')\in \p_-S\M$ and we consider \begin{equation}\label{famille totale}\mathfrak{F}=\Big\{\chi_{[a,b]},\;[a,b]\subset [0,\tau_+(x',\theta')]\Big\}
\end{equation} which is a complete set in $L^2([0,\tau_+(x',\theta')])$. We define $(l_n)_n$ a sequence of linear forms on $L^2([0,\tau_+(x',\theta')])$ by setting  $$l_n(\alpha)= \int_0^{\tau_+(x',\theta')}\alpha(s)e^{-i \lambda_n s} ~ds .$$
We have $\abs{l_n(\alpha)} \leq \tau_+(x',\theta')\|\alpha \|_{L^2([0,\tau_+(x',\theta')])}$. Then the norm of $l_n$ as a linear bounded operator satisfies $\|l_n \|\leq \tau_+(x',\theta')$. Hence, we have
\begin{equation}\label{norm ln} \displaystyle \sup_{n \geq 1}\|l_n \|\leq \tau_+(x',\theta') < \infty.
\end{equation}
As we have obviously $\abs{l_n(\chi_{[a,b]})} \leq\frac{2}{\abs{\lambda_n}} \rightarrow 0$ as $n \rightarrow \infty$ which gives that $l_n(\chi_{[a,b]})\rightarrow 0= l(\chi_{[a,b]})$, then combining with (\ref{famille totale}) and (\ref{norm ln}), we obtain that
\begin{equation*}
  \forall\alpha \in L^2([0,\tau_+(x',\theta')]), \;\; l_n(\alpha) \rightarrow l(\alpha)=0  \;\mbox{ as } \; n \rightarrow \infty.
\end{equation*}
Therefore the sequence $(l_n(\alpha))_n$ is bounded independently of $n$. Applying the Lebesgue's Theorem and replacing $\alpha$ by $\alpha_{x',\theta'}$, we end up by
  \begin{multline*}
\displaystyle \lim_{n \rightarrow +\infty} \displaystyle \int_{S\M} f(x,\theta)e^{i \lambda_n \tau_-(x,\theta)}  \dvv (x,\theta)= \cr \displaystyle  \int_{\p_-S\M}  \displaystyle \lim_{n \rightarrow +\infty} \int_0^{\tau_+(x',\theta')} \alpha_{x',\theta'}(s)e^{i \lambda_n s} ~ds \mu(x,\theta) \dss =0.
\end{multline*}
This completes the proof of the Lemma.
\end{proof}
\subsection*{Proof of Lemma \ref{properties psi h}}\label{proof psi}
The property (\ref{prop1}) is obvious. Let us prove property (\ref{prop2}).
The Poisson kernel of $B(0,1)\subset T_x \widetilde{\M}$ is defined as follows:
$$
P(\theta,\xi)=\frac{1-\abs{\theta}^2}{\alpha_n\abs{\theta-\xi}^n},\quad \theta\in B(0,1);\,\, \xi\in S_x\widetilde{\M}.
$$
For $0<h<1$, we define $\Psi_h: S_x\widetilde{\M} \times S_x\widetilde{\M} \to \R$ as
$$
\Psi_h(\theta,\xi)=P(h\theta,\xi).
$$
Let $P_0$ be the Poisson kernel for the Euclidian unit ball $B_0(0,1)\subset\R^n$ i.e.,
$$
P_0(\hat{\theta},\hat{\xi})=\frac{1-\abs{\hat{\theta}}_0^2}{\alpha_n\abs{\hat{\theta}-\hat{\xi}}_0^n},\quad \hat{\theta}\in B_0(0,1);\,\, \hat{\xi}\in \mathbb{S}^{n-1}.
$$
where $\abs{\cdot}_0$ is the Euclidian norm of $\R^n$. From the well known properties of $P_0$, we have
$$
\int_{\mathbb{S}^{n-1}}P_0(\kappa\hat{\theta},\hat{\xi}) d\omega_0(\hat{\xi})=1, \quad\textrm{for all}\, \kappa\in (0,1),\,\, \hat{\theta}\in \mathbb{S}^{n-1}.
$$
Let $\gamma=\sqrt{\g}:=(\gamma_{ij})$ be definite symetric positive matrix such that $\gamma^2=(g_{ij})$, then we get
$$
P(\theta,\xi)=\frac{1-\abs{\gamma^{-1}\theta}_0^2}{\alpha_n\abs{\gamma^{-1}\theta-\gamma^{-1}\xi}_0^n},\quad \theta,\xi\in S_x\widetilde{\M}.
$$
We deduce from the change of variable $\hat{\theta}=\gamma^{-1}\theta$ that
\begin{eqnarray*}
\int_{S_x\widetilde{\M}}P(h\xi,\theta) d\omega_x(\theta)=\int_{S_x\widetilde{\M}}P_0(h\gamma^{-1}\xi,\gamma^{-1}
\theta) =\frac{1}{\det\gamma}  \int_{\mathbb{S}^{n-1}}P_0(h\hat{\xi},\hat{\theta})(\det\gamma) d\omega_0(\hat{\theta})=1.
\end{eqnarray*}
This completes the proof of (\ref{prop2}).
Let now recall that for any $\hat{v}\in C(\mathbb{S}^{n-1})$, we have the following Poisson's formula:
\begin{equation*}
  \displaystyle \lim_{h \rightarrow 1} \int_{\mathbb{S}^{n-1}}P_0(h\hat{\xi},\hat{\theta})\hat{v}(\hat{\theta}) d\omega_0(\hat{\theta})=v(\hat{\theta}),\; \forall \hat{\xi}\in \mathbb{S}^{n-1}.
  \end{equation*}
  Then, for any $v\in C(S_x\widetilde{\M})$, we have
  \begin{equation*}
   \displaystyle \lim_{h \rightarrow 1}
 \int_{S_x\widetilde{\M}}P(h\xi,\theta)v(\theta) d\omega_x(\theta)= \displaystyle \lim_{h \rightarrow 1} \int_{\mathbb{S}^{n-1}}P_0(h\hat \xi,\hat{\theta})
\hat{v}(\hat{\theta}) d\omega_0(\hat{\theta}),
\end{equation*} where $\hat{\xi}=\gamma^{-1}\xi$ and $\hat{\theta}=\gamma^{-1} \theta$ and $\hat{v}(\hat{\theta})=v(\gamma \hat{\theta})$.
Hence \begin{equation*}
   \displaystyle \lim_{h \rightarrow 1}
 \int_{S_x\widetilde{\M}}P(h\xi,\theta)v(\theta) d\omega_x(\theta)= \hat{v}( \hat{\xi} )=v(\xi).
\end{equation*}
This completes the proof of Lemma \ref{properties psi h}.
\subsection*{Acknowledgements}
 The author would like to thank Pr. Mourad Bellassoued for his enriching discussions and many helpful suggestions.

\end{document}